\theoremstyle{plain}
\newtheorem{lem}{Lemma}[section]
\newtheorem{prop}{Proposition}[section]
\newtheorem{thm}{Theorem}[section]
\newtheorem{mainthm}{Theorem}
\theoremstyle{definition}
\newtheorem{defn}{Definition}[section]
\theoremstyle{remark}
\newtheorem{ex}{Example}[section]
\newtheorem{rem}{Remark}[section]
\begin{document}
\title[Convexity properties of generalized moment maps]
{Convexity properties \\of \\generalized moment maps}
\author{Yasufumi Nitta}
\address{Department of Mathematics, Graduate school of science, Osaka University. 1-1 Machikaneyama, Toyonaka, Osaka 560-0043 Japan.}
\date{}
\maketitle
\thispagestyle{empty}

\begin{abstract}
In this paper, we consider generalized moment maps for 
Hamiltonian actions on $H$-twisted 
generalized complex manifolds introduced by Lin and Tolman \cite{Lin}. 
The main purpose of this paper is to show convexity and connectedness 
properties for generalized moment maps. 
We study Hamiltonian torus actions on 
compact $H$-twisted generalized complex manifolds and prove 
that all components of the generalized moment map are Bott-Morse functions. 
Based on this, we shall show that the generalized moment maps have 
a convex image and connected fibers. Furthermore, by applying the arguments 
of Lerman, Meinrenken, Tolman, and Woodward \cite{Ler2} 
we extend our results to the case of 
Hamiltonian actions of general compact Lie groups 
on $H$-twisted generalized complex orbifolds. 
\end{abstract}

%%%%%%%%%%%%%%%%%%%%%%%%%%%%%%%%%%%%%%%%%%%%%%%%%%%%%%%%%%%%%%%%%%%%%%%%%%%%%%%
\section{Introduction}
The notion of ($H$-twisted) generalized complex structures was introduced by 
Hitchin \cite{Hi} inspired by physical motivations. 
%and developed by Gualtieri \cite{Gua}. 
It provides us with a unifying framework for both complex and symplectic 
geometry and with a useful geometric language for understanding 
some recent development in string theory. 
The associated notion of $H$-twisted generalized K${\rm \ddot a}$hler 
structures was introduced by Gualtieri \cite{Gua}, 
showing that this notion is essentially equivalent to that of 
bihermitian structures. This equivalence was first observed by 
physicists in their study \cite{GHR} of a super-symmetric nonlinear 
sigma model. 

For Hamiltonian group actions on manifolds, moment maps are a very 
useful tool in geometry. In generalized complex geometry, 
Lin and Tolman studied the notions of Hamiltonian actions and 
generalized moment maps for actions of compact Lie groups on 
$H$-twisted generalized complex manifolds \cite{Lin}, 
and established a reduction theorem. 
In the present paper we study the convexity properties of 
generalized moment maps for Hamiltonian actions. Both convexity and connectedness for 
moment maps in symplectic geometry were studied by Atiyah \cite{At} 
and Guillemin-Sternberg \cite{GS} in the case of torus actions on 
compact symplectic manifolds. We here consider Hamiltonian 
torus actions on compact connected $H$-twisted generalized 
complex manifolds and prove such convexity and 
connectedness for generalized moment maps (cf. Sections 2 and 3). 
%Our first main result is stated below. 
%The detailed description can be seen in section 2 and section 3. 
\begin{mainthm}
Let an $m$-dimensional torus $T^{m}$ act on a compact connected 
$H$-twisted generalized complex manifold $(M, \mathcal{J})$ in a 
Hamiltonian way with a generalized moment map 
$\mu : M \to \mathfrak{t}^{*}$ and a moment one 
form $\alpha \in \Omega^{1}(M; \mathfrak{t}^{*})$. 
Then 
\begin{enumerate}
\item the levels of $\mu$ are connected, 
\item the image of $\mu$ is convex, and 
\item the fixed points of the action form a finite 
union of connected generalized complex submanifolds $C_{1}, \cdots, C_{N}$:
\begin{equation*}
{\rm Fix}(T^{m}) = \bigcup_{i=1}^{N}C_{i}. 
\end{equation*}
On each component the generalized moment map 
$\mu$ attains a constant: $\mu(C_{i}) = \{a_{i}\}$, and the image of 
$\mu$ is the convex hull of the images $a_{1}, \cdots, a_{N}$ 
of the fixed points, that is, 
\begin{equation*}
\mu(M) = 
\left\{ \sum_{i=1}^{N}\lambda_{i}a_{i}\ |\ \sum_{i=1}^{N}\lambda_{i}=1, \lambda_{i} \geq 0 \right\}. 
\end{equation*}
\end{enumerate}
\end{mainthm}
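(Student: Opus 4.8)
The plan is to adapt the classical arguments of Atiyah \cite{At} and Guillemin--Sternberg \cite{GS}, proceeding by induction on $m = \dim T^{m}$. The one new ingredient, supplied by Section~2, is that for every $\xi \in \mathfrak{t}$ the component $\mu^{\xi} = \langle \mu, \xi \rangle$ is a Bott--Morse function on $M$ whose critical set is the fixed-point set of the closure of the one-parameter subgroup generated by $\xi$, whose critical submanifolds are connected generalized complex submanifolds on which $\mu$ is constant, and all of whose indices and coindices are even; in particular no critical submanifold has index or coindex equal to $1$. I will use this freely, together with the fact that $\mu$ is $T^{m}$-invariant. For the base case $m = 1$, identify $\mathfrak{t}^{*} \cong \mathbb{R}$. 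Then $\mu$ is a Bott--Morse function on the compact connected manifold $M$ with $\mathrm{Fix}(T^{1}) = \bigcup_{i=1}^{N} C_{i}$ its critical set, a finite disjoint union of connected generalized complex submanifolds with $\mu(C_{i}) = \{a_{i}\}$ and with no index- or coindex-one stratum. The classical connectedness lemma for such Bott--Morse functions (cf. \cite{At}) then shows that all level sets and sublevel sets of $\mu$ are connected, which is $(1)$; consequently $\mu(M) = [\min_{M}\mu, \max_{M}\mu]$ is an interval, hence convex, which is $(2)$; and since the extrema are attained on critical components, $\mu(M)$ is the convex hull of $\{a_{1}, \dots, a_{N}\}$, which is $(3)$.

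For the inductive step, suppose $m \ge 2$ and that the theorem is known for tori of dimension $< m$. Let $T' \subset T^{m}$ be a codimension-one subtorus and choose $\eta \in \mathfrak{t}$ with $\mathfrak{t} = \mathfrak{t}' \oplus \mathbb{R}\eta$. The restricted $T'$-action is Hamiltonian with generalized moment map $\mu' = \mathrm{pr} \circ \mu$, where $\mathrm{pr}\colon \mathfrak{t}^{*} \to (\mathfrak{t}')^{*}$ is the canonical projection, and by the inductive hypothesis each level set $N_{a} := (\mu')^{-1}(a)$ is connected and $\mu'(M)$ is convex. Since the fibers of $\mu$ lying over $\mathrm{pr}^{-1}(a)$ are exactly the level sets of the restricted function $\mu^{\eta}|_{N_{a}}$, statement $(1)$ will follow once one shows that $\mu^{\eta}|_{N_{a}}$ has connected level sets; this is the heart of the argument and is discussed at the end.

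For statement $(2)$ I follow Atiyah's reduction: it suffices to prove $\overline{pq} \subset \mu(M)$ for $p, q \in \mu(M)$ whose difference spans a rational line, the general case following by a limiting argument since $M$ is compact, so $\mu(M)$ is closed, and rational directions are dense. For such a pair, choose the rational subtorus $T'$ so that $\mathrm{pr}(p) = \mathrm{pr}(q) = a$; then, under the evident affine identification, $\mu(M) \cap \mathrm{pr}^{-1}(a) = \mu(N_{a})$ coincides with $\mu^{\eta}(N_{a})$, which is a closed interval by connectedness of $N_{a}$ and of the level sets of $\mu^{\eta}|_{N_{a}}$; this interval contains $p$ and $q$, hence contains $\overline{pq}$, proving convexity. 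For statement $(3)$, note first that for $\xi$ generating a dense one-parameter subgroup one has $\mathrm{Crit}(\mu^{\xi}) = \mathrm{Fix}(T^{m})$, so $\mathrm{Fix}(T^{m})$ is a finite disjoint union of connected generalized complex submanifolds $C_{1}, \dots, C_{N}$ with $\mu(C_{i}) = \{a_{i}\}$. The inclusion $\mathrm{conv}\{a_{i}\} \subseteq \mu(M)$ is immediate from $(2)$. Conversely, for any $x \in \mu(M)$ and any rational generic $\xi \in \mathfrak{t}$, the maximum of the Bott--Morse function $\mu^{\xi}$ is attained on $\mathrm{Crit}(\mu^{\xi}) = \mathrm{Fix}(T^{m})$, so $\langle x, \xi \rangle \le \max_{M}\mu^{\xi} = \max_{i} \langle a_{i}, \xi \rangle$; by density this inequality holds for all $\xi \in \mathfrak{t}$, and the separating hyperplane theorem then gives $x \in \mathrm{conv}\{a_{i}\}$. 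Hence $\mu(M) = \mathrm{conv}\{a_{1}, \dots, a_{N}\}$.

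The main obstacle is the step singled out above: showing that $\mu^{\eta}$ restricted to the possibly singular level set $N_{a} = (\mu')^{-1}(a)$ has connected level sets. As in the symplectic case this cannot be deduced from the global Bott--Morse property of $\mu^{\eta}$ on $M$ alone; one must analyse the restriction near its critical points, using the local normal form of the generalized complex structure $\mathcal{J}$ and of the moment one-form $\alpha$ at a fixed point established in Section~2 to see that $N_{a}$ is locally a union of pieces on which $\mu^{\eta}$ exhibits neither index-one nor coindex-one behaviour, so that the connectedness lemma still applies. Once this local picture is in hand, the remaining arguments are routine adaptations of \cite{At} and \cite{GS}.
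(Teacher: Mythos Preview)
Your overall inductive strategy matches the paper's, and your arguments for convexity ($A_{m-1}\Rightarrow B_m$) and for the convex-hull description of the image are essentially those of Section~3.2. The gap is exactly at the point you flag as the ``main obstacle'': showing that $\mu^{\eta}|_{N_a}$ has connected level sets.

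Your proposed fix---invoking a ``local normal form of the generalized complex structure $\mathcal{J}$ and of the moment one-form $\alpha$ at a fixed point established in Section~2''---does not work, because no such normal form is established anywhere in the paper (and no equivariant Darboux-type theorem is available in the $H$-twisted generalized complex setting). The paper sidesteps the singular-fiber issue by a different route. First one reduces to the case where $\mu$ is \emph{effective}; then one shows that the set of $a=(a_1,\dots,a_m)\in\mu(M)$ for which $(a_1,\dots,a_{m-1})$ is a regular value of $(\mu_1,\dots,\mu_{m-1})$ is dense, so by continuity it suffices to treat only such $a$. For these, $Q=\bigcap_{i=1}^{m-1}\mu_i^{-1}(a_i)$ is a smooth connected submanifold, and the crucial technical step is Lemma~\ref{BM2}: $\mu_m|_Q$ is itself a Bott--Morse function with even index and coindex. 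The proof of that lemma is not a local-model computation; one observes that any critical point $p$ of $\mu_m|_Q$ is a critical point of some $\mu^{\xi}$ on $M$, and then uses the compatible almost complex structures $J_{\pm}$ coming from the decomposition~(\ref{decomp}) to show that $\mathrm{Crit}(\mu^{\xi})=\mathrm{Fix}(T^{\xi})$ meets $Q$ transversally---by checking that the vector fields $g^{-1}(d\mu_i)$ are tangent to $\mathrm{Fix}(T^{\xi})$ and that $\omega_{+}$ remains nondegenerate on it---and that $(T_p\mathrm{Crit}(\mu^{\xi}))^{\perp}\subset T_pQ$. This is precisely what transfers the even-index/coindex property from $\mu^{\xi}$ on $M$ to $\mu_m$ on $Q$. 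Without this argument (or a genuine substitute), the inductive step $A_{m-1}\Rightarrow A_m$ is incomplete.
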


For the proof of the theorem, we need to show that 
all components of the generalized moment map are Bott-Morse functions, 
that is, the function $\mu^{\xi} : M \to \mathbb{R}$ is a Bott-Morse function for all $\xi \in \mathfrak{t}$ (cf. Proposition \ref{BM}). This is crucial in the proof, and is obtained by 
the maximum principle for pseudoholomorphic functions on 
almost complex manifolds. 

In the latter part of this paper, we shall extend our results to the case of 
general compact Lie group actions on $H$-twisted 
generalized complex orbifolds under the assumption of {\it weak nondegeneracy} (cf. Definition \ref{weak_nondegeneracy}) for generalized moment maps, where weak nondegeneracy is always the case for compact orbifolds. 
Recall that the non-abelian convexity theorem 
in symplectic geometry was proved by Kirwan 
\cite{Kir} and Lerman-Meinrenken-Tolman-Woodward \cite{Ler2}. 
A subset $\Delta$ of a vector 
space $V$ is {\it polyhedral} if it is an intersection of finitely many 
closed half-spaces, and is {\it locally polyhedral} 
if for each point $p \in \Delta$ there exist a neighborhood $U$ of $p$ in $V$ 
and a polyhedral set $P$ in $V$ such that $U \cap \Delta = U \cap P$. 
Then we obtain: 
\begin{mainthm}
Let $(M, \mathcal{J})$ be a connected $H$-twisted generalized complex 
orbifold with a Hamiltonian action of a compact connected Lie group $G$, 
a proper generalized moment map $\mu : M \to \mathfrak{g}^{*}$, and a 
moment one form $\alpha \in \Omega^{1}(M; \mathfrak{g}^{*})$. Suppose that 
the generalized moment map $\mu$ has weak nondegeneracy. 
\begin{enumerate}
\item 
Let $\mathfrak{t}_{+}^{*}$ be a closed Weyl chamber for the Lie group 
considered as a subset of $\mathfrak{g}^{*}$. The moment set 
$\Delta = \mu(M) \cap \mathfrak{t}_{+}^{*}$ is a convex locally polyhedral 
set. In particular, if $M$ is compact then $\Delta$ is a convex polytope. 
\item The levels of $\mu$ are connected. 
\end{enumerate}
\end{mainthm}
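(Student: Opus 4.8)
The plan is to follow the strategy of Lerman-Meinrenken-Tolman-Woodward \cite{Ler2}: reduce the two assertions to the already-established torus case (Theorem~A) by a \emph{cross-section} argument, and then propagate the resulting local information about $\mu$ to the global statements by a local-to-global principle for convexity of proper maps. Fix a maximal torus $T\subset G$ with Lie algebra $\mathfrak t$, identify $\mathfrak t^{*}$ with a subspace of $\mathfrak g^{*}$, and use that the closed Weyl chamber $\mathfrak t^{*}_{+}$ is a fundamental domain for the coadjoint action, so that the orbit map induces a continuous proper surjection $\pi\colon\mathfrak g^{*}\to\mathfrak t^{*}_{+}$; since $\mu(M)$ is $G$-invariant one has $\Delta=(\pi\circ\mu)(M)$, and $\Phi:=\pi\circ\mu\colon M\to\mathfrak t^{*}_{+}$ is a proper map from a connected orbifold.

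The main step is a cross-section theorem in the $H$-twisted generalized complex category. For an open face $\sigma$ of $\mathfrak t^{*}_{+}$ let $G_{\sigma}$ be the common stabilizer of the points of $\sigma$. Using the weak nondegeneracy hypothesis (Definition~\ref{weak_nondegeneracy}) one shows that there is a $G_{\sigma}$-invariant, locally closed suborbifold $Y_{\sigma}\subset M$ on which the restriction of $\mathcal J$ is again an $H$-twisted generalized complex structure, such that $G_{\sigma}$, and in particular the maximal torus $T$, acts on $Y_{\sigma}$ in a Hamiltonian way with generalized moment map and moment one form induced by $\mu|_{Y_{\sigma}}$ and $\alpha|_{Y_{\sigma}}$, and such that a $G$-invariant neighbourhood of $\mu^{-1}(\sigma)$ is $G$-equivariantly isomorphic to the associated bundle $G\times_{G_{\sigma}}Y_{\sigma}$. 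Weak nondegeneracy is used precisely here, to guarantee that $\mathcal J$ restricts to a generalized complex structure on every cross-section; this is what makes the machinery behind Theorem~A --- above all the Bott-Morse property of every component of the generalized moment map, Proposition~\ref{BM} --- available for the Hamiltonian $T$-action on $Y_{\sigma}$.

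Granting the cross-section theorem, I would extract \emph{local convexity data} for $\Phi$ at every point $a\in\mathfrak t^{*}_{+}$. If $a$ lies on the open face $\sigma$, then over a neighbourhood of $a$ the map $\Phi$ is, under the identification with $G\times_{G_{\sigma}}Y_{\sigma}$, governed by the generalized moment map of the Hamiltonian $T$-action on $Y_{\sigma}$; the local analysis of that moment map near its critical submanifolds (Proposition~\ref{BM} and the proof of Theorem~A) shows that near $a$ the set $\Delta$ is polyhedral --- indeed the intersection of $\mathfrak t^{*}_{+}$ with a convex polyhedral cone with vertex $a$ --- and that the fibres of $\Phi$ over a neighbourhood of $a$ are connected. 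Since $M$ is connected and $\Phi$ is proper, this local convexity data propagates, by the local-to-global convexity principle (see \cite{Ler2}), to the conclusion that $\Delta=\Phi(M)$ is convex and locally polyhedral and that the fibres of $\Phi$ are connected. Unwinding the bundle identifications $G\times_{G_{\sigma}}Y_{\sigma}$ and using connectedness of the fibres of the $T$-moment map on each $Y_{\sigma}$, one deduces that the fibres $\mu^{-1}(a)$ of $\mu$ itself are connected over $\mathfrak t^{*}_{+}$, and hence, by equivariance, over every point of $\mathfrak g^{*}$; this proves (2). Finally, if $M$ is compact then $\Delta$ is compact, and a compact convex locally polyhedral set is a convex polytope, which gives the remaining part of (1).

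I expect the cross-section theorem to be the principal obstacle: one must establish a local normal form for Hamiltonian generalized complex actions and, in particular, check that weak nondegeneracy is exactly the condition under which the cross-sections $Y_{\sigma}$ inherit $H$-twisted generalized complex structures. Once this is in place, the orbifold bookkeeping and the local-to-global passage follow the symplectic arguments of \cite{Ler2} with essentially only notational changes.
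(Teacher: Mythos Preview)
Your broad strategy---cross-section plus reduction to the torus case---matches the paper's, but the execution differs in a way worth noting, and you have misidentified where weak nondegeneracy enters. The paper does \emph{not} assemble local convexity data from cross-sections over every face of $\mathfrak t^{*}_{+}$ and then invoke a local-to-global principle. Instead it uses a single \emph{principal} cross-section: there is a unique open face $\sigma$ of the Weyl chamber (the principal face) such that $Y=\mu^{-1}(\sigma)$ is a connected $T$-invariant generalized complex suborbifold with $G\cdot Y$ dense in $M$ and $\mu(M)\cap\sigma$ dense in $\Delta$. The restriction $\mu|_{Y}$ is a proper $T$-moment map into the convex open set $\sigma$, so the noncompact torus theorem (Theorem~\ref{noncompact}, proved via generalized complex cuts) gives convexity, local polyhedrality, and connected fibres for $\mu|_{Y}$ directly; taking closures and a density argument then yield both assertions for $\mu$. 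This route is shorter than yours and avoids needing a local normal form for Hamiltonian generalized complex actions, which the paper does not establish.

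On weak nondegeneracy: it is \emph{not} what makes $\mathcal J$ restrict to the cross-sections. Theorem~\ref{cross2} holds without any nondegeneracy hypothesis; the argument is purely in terms of pure spinors and the symplectic linear algebra on the slice (Lemmas~\ref{lem41} and~\ref{lem42}). Weak nondegeneracy enters elsewhere: it gives the identification $(\mu_{*})_{p}(\tilde T_{p}M)=(\mathfrak g_{p})^{0}$ (Remark~\ref{rem41}), which is needed for the cut construction and hence for Theorem~\ref{noncompact}, and it is what guarantees ${\rm Crit}(\mu^{\xi})={\rm Fix}(T^{\xi})$ so that the Bott-Morse analysis of Proposition~\ref{BM} goes through on the (noncompact) cross-section. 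So your instinct that weak nondegeneracy is the crucial hypothesis is right, but it feeds into the moment-map analysis on the cross-section, not into the existence of the induced generalized complex structure.
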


Let us explain the real meaning of the convexity property for 
generalized moment maps. 
In general, an $H$-twisted generalized complex structure 
is of an intermediate type, i.e., it is neither 
a complex structure nor a symplectic structure. Then the manifold is 
locally fibered over a complex base space such that symplectic 
structures appear in the fiber directions. 
The generalized moment map is thought of as a ``relative version" of the ordinary moment map. Now our theorems on generalized moment maps show not 
only the convexity of the image of each fiber but also 
the convexity of the global image of the generalized moment maps 
(cf. Section 4.4). 

This paper is organized as follows. In section 2 we briefly review 
the theory of generalized complex structures and generalized 
K${\rm \ddot a}$hler structures. Furthermore we introduce 
generalized complex submanifolds of $H$-twisted generalized 
complex manifolds in the sense of Ben-Bassat and Boyarchenko 
\cite{B}. In section 3 we consider the notion of 
generalized moment maps \cite{Lin} for Hamiltonian actions on 
$H$-twisted generalized complex manifolds and prove that 
all components of the generalized moment map are Bott-Morse functions. 
After that we shall give a proof of Theorem A. 
Finally in the last section, we give a proof of Theorem B. 
Our proof follows that of the non-abelian convexity and connectedness 
properties in symplectic geometry by 
Lerman-Meinrenken-Tolman-Woodward in \cite{Ler2}. 
To complete our proof, we need an additional constraint ``weak nondegeneracy" for generalized moment maps and a generalized complex geometry 
analogue of the cross-section theorem in \cite{Ler2} (cf. Theorem \ref{cross2}). 

\section{Generalized complex structures}
We recall the basic theory of generalized complex 
structures; see \cite{Gua} for the details. Throughout this paper, 
we assume that all manifolds and orbifolds are connected. 

\subsection{Generalized complex structures}
Given a closed differential $3$-form $H$ on an $n$-dimensional manifold $M$, 
we define the $H$-twisted Courant bracket of sections of 
the direct sum $TM \oplus T^{*}M$ of the tangent and cotangent bundles by 
\begin{equation*}
[X + \alpha, Y + \beta]_{H} = 
[X, Y] + \mathcal{L}_{X}\beta - \mathcal{L}_{Y}\alpha 
- \frac{1}{2}d\left( \beta(X) - \alpha(Y) \right) + i_{Y}i_{X}H, 
\end{equation*}
where $\mathcal{L}_{X}$ denotes the Lie derivative along a vector 
field $X$. The vector bundle $TM \oplus T^{*}M$ is also endowed 
with a natural inner product of signature $(n, n)$: 
\begin{equation*}
\langle X + \alpha, Y + \beta \rangle = \frac{1}{2}(\beta(X) + \alpha(Y)). 
\end{equation*}

\begin{defn}
Let $M$ be a manifold and $H$ be a closed $3$-form on $M$. 
A generalized almost complex structure on $M$ is a complex 
structure $\mathcal{J}$ on the bundle $TM \oplus T^{*}M$ 
which preserves the natural inner product. 
If sections of the $\sqrt{-1}$-eigenspace $L$ of $\mathcal{J}$ is closed 
under the $H$-twisted Courant bracket, then $\mathcal{J}$ is called an 
$H$-twisted generalized complex structure of $M$. If $H = 0$, we call it 
simply a generalized complex structure. 
\end{defn}

An $H$-twisted generalized complex structure $\mathcal{J}$ 
can be fully described in terms of its $\sqrt{-1}$-eigenbundle $L$, 
which is a maximal isotropic subbundle of 
$(TM \oplus T^{*}M) \otimes \mathbb{C}$ satisfying 
$L \cap \bar L = \{ 0 \}$ and to be closed under 
the $H$-twisted Courant bracket. 
For the natural projection $\pi : (TM \oplus T^{*}M) \otimes \mathbb{C} 
\rightarrow TM \otimes \mathbb{C}$, the codimension of $\pi(L)$ in 
$TM \otimes \mathbb{C}$ is called the type of $\mathcal{J}$ and written 
by ${\rm type}(\mathcal{J})$. 

\begin{ex}[Complex structures (type $n$)]
Let $J$ be a complex structure on an $n$-dimensional complex manifold $M$. 
Consider the endomorphism 
\begin{equation*}
\mathcal{J}_{J}=
\left(
\begin{array}{ccc}
 J  & 0      \\
 0  & -J^{*} \\
\end{array}
\right), 
\end{equation*}
where the matrix is written with respect to the direct sum 
$TM \oplus T^{*}M$. Then $\mathcal{J}_{J}$ is a 
generalized complex structure of type $n$ on $M$; 
the $\sqrt{-1}$-eigenspace of $\mathcal{J}_{J}$ is 
$L_{J} = T_{1,0}M \oplus T^{0,1}M$, where $T_{1,0}M$ is the 
$\sqrt{-1}$-eigenspace of $J$. 
\end{ex}

\begin{ex}[Symplectic structures (type 0)]
Let $M$ be a symplectic manifold with a symplectic structure $\omega$, 
viewed as a skew-symmetric isomorphism 
$\omega :TM \to T^{*}M$ via the interior product 
$X \mapsto i_{X}\omega$. Consider the endomorphism 
\begin{equation*}
\mathcal{J}_{\omega}=
\left(
\begin{array}{ccc}
 0       & -\omega^{-1} \\
 \omega  & 0            \\
\end{array}
\right). 
\end{equation*}
Then $\mathcal{J}_{\omega}$ is a 
generalized complex structure on $M$ of type $0$; 
the $\sqrt{-1}$-eigenspace of $\mathcal{J}_{\omega}$ is given by 
\begin{equation*}
L_{\omega} = \{ X - \sqrt{-1}i_{X}\omega\ |\ X \in TM \otimes \mathbb{C} \}.
\end{equation*}
\end{ex}

\begin{ex}[$B$-field shift]
Let $(M, \mathcal{J})$ be an $H$-twisted generalized complex manifold 
and $B \in \Omega^{2}(M)$ be a closed $2$-form on $M$. Then the endomorphism 
\begin{equation*}
\mathcal{J}_{B}=
\left(
\begin{array}{ccc}
 1 & 0 \\
 B & 1 \\
\end{array}
\right)
\mathcal{J}
\left(
\begin{array}{ccc}
 1   & 0 \\
 -B  & 1 \\
\end{array}
\right)
\end{equation*}
is also an $H$-twisted generalized complex structure. It is called the 
$B$-field shift of $\mathcal{J}$. The type of $\mathcal{J}_{B}$ coincides 
with that of $\mathcal{J}$. Indeed, the $\sqrt{-1}$ eigenspace $L_{B}$ of 
$\mathcal{J}_{B}$ can be written as 
\begin{equation*}
L_{B} = \{ X + \alpha + i_{X}B\ |\ X + \alpha \in L \}, 
\end{equation*}
where $L$ is the $\sqrt{-1}$ eigenspace of $\mathcal{J}$. 
\end{ex}

The type of an $H$-twisted generalized complex structure is not required to 
be constant along the manifold and it may jump along loci. 
Gualtieri constructed a generalized complex structure on 
$\mathbb{CP}^{2}$ which is type 2 along a cubic curve and type 0 
outside the cubic curve. The detailed construction can be seen in 
\cite{Gua}. 

Next we describe the notions of $H$-twisted generalized complex structures 
from the view point of differential forms. For the details, see \cite{Gua}. 
Let $(M, \mathcal{J})$ be a $2n$-dimensional $H$-twisted generalized 
complex manifold with its $\sqrt{-1}$-eigenspace $L$. 
Recall that the exterior algebra $\wedge^{\bullet}T^{*}M$ 
carries a natural spin representation for the metric bundle 
$TM \oplus T^{*}M$; the Clifford action of $X + \alpha \in TM \oplus T^{*}M$ 
on $\varphi \in \wedge^{\bullet}T^{*}M$ is given by 
\begin{equation*}
(X + \alpha) \cdot \varphi = i_{X}\varphi + \alpha \wedge \varphi. 
\end{equation*}
The annihilator $K$ of $L$ by the spin 
representation forms a complex line subbundle of the complex spinors 
$\wedge^{\bullet}T^{*}M \otimes \mathbb{C}$. We call $K$ the 
canonical line bundle of $\mathcal{J}$: 
\begin{equation*}
K = \{ \varphi \in \wedge^{\bullet}T^{*}M \otimes \mathbb{C}\ |\ 
(X + \alpha) \cdot \varphi = 0\ (\forall X + \alpha \in L) \}. 
\end{equation*}
Then the $\sqrt{-1}$-eigenspace $L$ can also be viewed as 
an annihilator of $K$. 

Conversely, for a complex spinor 
$\varphi \in \wedge^{\bullet}T^{*}M \otimes \mathbb{C}$, 
consider $L_{\varphi}$ the annihilator of $\varphi$: 
\begin{equation*}
L_{\varphi} = \{ X + \alpha \in (TM \oplus T^{*}M) \otimes \mathbb{C}\ |\ 
(X + \alpha) \cdot \varphi = 0 \}. 
\end{equation*}
Then the subspace $L_{\varphi} \subset (TM \oplus T^{*}M) \otimes \mathbb{C}$ is 
always isotropic. If $L_{\varphi}$ is maximal isotropic, 
$\varphi$ is called a complex pure spinor. 
A necessary and sufficient condition that $\varphi$ is pure 
can be described as follows. We call a complex differential 
$k$-form $\Omega$ to be decomposable if it has the algebraic form 
$\Omega = \theta^{1} \wedge \cdots \wedge \theta^{k}$ at each point, 
where $\theta^{1}, \cdots, \theta^{k}$ are linearly independent 
complex $1$-forms. 
Then the spinor $\varphi$ is pure if and only if it can be written 
locally as 
\begin{equation*}
\varphi = e^{B + \sqrt{-1}\omega} \wedge \Omega, 
\end{equation*}
where $B$ and $\omega$ are real 2-forms and 
$\Omega$ is a complex decomposable $k$-form. The condition 
$L_{\varphi} \cap \bar L_{\varphi} = \{ 0 \}$ is equivalent to 
an additional constraint on 
$\varphi$: 
\begin{equation*}
\omega^{2(n-k)} \wedge \Omega \wedge \bar \Omega \not = 0. 
\end{equation*}
A complex pure spinor $\varphi$ which satisfies the condition above 
is said to be nondegenerate. 
If a complex differential form 
$\varphi \in \Omega^{\bullet} \otimes \mathbb{C}$ 
is a nondegenerate complex pure spinor at every point on $M$, 
then we have 
$(TM \oplus T^{*}M) \otimes \mathbb{C} = L_{\varphi} \oplus \bar L_{\varphi}$, 
and $L_{\varphi}$ defines a generalized almost complex structure 
on $M$. For each point, the integer $k$ defined above coincides with 
the type of the generalized almost complex structure. 
The canonical line bundle is generated by 
the complex pure spinor $\varphi$. 

Finally as shown in \cite{Gua}, the involutivity of $L_{\varphi}$ 
under the Courant bracket is equivalent to the condition 
that there exist a section $X + \alpha$ of 
$(TM \oplus T^{*}M) \otimes \mathbb{C}$ such that
\begin{equation*}
d\varphi + H \wedge \varphi = (X + \alpha) \cdot \varphi. 
\end{equation*}

\subsection{Generalized K${\rm \ddot a}$hler structures}
We briefly review the notion of 
$H$-twisted generalized K${\rm \ddot a}$hler structures. 

\begin{defn}
Let $M$ be a manifold and $H$ a closed $3$-form on $M$. 
An $H$-twisted generalized K${\rm \ddot a}$hler structure on $M$ 
is a pair of commuting $H$-twisted generalized complex structures 
$(\mathcal{J}_{1}, \mathcal{J}_{2})$ so that 
$\mathcal{G} = -\mathcal{J}_{1} \mathcal{J}_{2}$ is 
a positive definite metric, 
that is, $\mathcal{G}^{2} = {\rm id}$, $\mathcal{G}$ preserves 
the natural inner product and 
$\mathcal{G}(X + \alpha, X + \alpha) := 
\langle \mathcal{G}(X + \alpha), X + \alpha \rangle > 0$ 
for all non-zero $X + \alpha \in TM \oplus T^{*}M$. 
\end{defn}

\begin{ex}
Let $(M, g, J)$ be a K${\rm \ddot a}$hler manifold and $\omega = gJ$ 
be the K${\rm \ddot a}$hler form. As seen in the examples above, 
$J$ and $\omega$ induce generalized complex structures $\mathcal{J}_{J}$ 
and $\mathcal{J}_{\omega}$ respectively. Moreover, we see that 
$\mathcal{J}_{J}$ commutes with $\mathcal{J}_{\omega}$, and 
\begin{equation*}
\mathcal{G} = -\mathcal{J}_{J}\mathcal{J}_{\omega} = 
\left(
\begin{array}{ccc}
 0 & g^{-1} \\
 g & 0 \\
\end{array}
\right)
\end{equation*}
is a positive definite metric on $TM \oplus T^{*}M$. Hence 
$(\mathcal{J}_{J}, \mathcal{J}_{\omega})$ is a generalized 
K${\rm \ddot a}$hler structure on $M$. 
\end{ex}
\begin{ex}
Let $(\mathcal{J}_{1}, \mathcal{J}_{2})$ be an $H$-twisted generalized 
K${\rm \ddot a}$hler structure, and $B$ be a closed $2$-form on $M$. 
Then $((\mathcal{J}_{1})_{B}, (\mathcal{J}_{2})_{B})$ is also an 
$H$-twisted generalized K${\rm \ddot a}$hler structure. It is called 
the $B$-field shift of $(\mathcal{J}_{1}, \mathcal{J}_{2})$. 
\end{ex}

In \cite{Gua}, a characterization of $H$-twisted generalized 
K${\rm \ddot a}$hler structures was given in terms of Hermitian 
geometry, which is represented below. 

\begin{thm}[M. Gualtieri, \cite{Gua}]
For each $H$-twisted generalized K${\rm \ddot a}$hler structure 
$(\mathcal{J}_{1}, \mathcal{J}_{2})$, there exists a uniquely determined 
$2$-form $b$, a Riemannian metric $g$ and two orthogonal 
complex structures $J_{\pm}$ such that 
\begin{equation*}
\mathcal{J}_{1,2} = \frac{1}{2}
\left(
\begin{array}{ccc}
 1 & 0 \\
 b & 1 \\
\end{array}
\right)\left(
\begin{array}{ccc}
 J_{+} \pm J_{-}           & -(\omega^{-1}_{+} \mp \omega^{-1}_{-}) \\
 \omega_{+} \mp \omega_{-} & -(J^{*}_{+} \pm J^{*}_{-})            \\
\end{array}
\right)\left(
\begin{array}{ccc}
 1  & 0 \\
 -b & 1 \\
\end{array}
\right), 
\end{equation*}
where $\omega_{\pm} = gJ_{\pm}$ with the condition 
\begin{equation}\label{torsion}
d^{c}_{-}\omega_{-} = -d^{c}_{+}\omega_{+} = H + db. 
\end{equation}
Conversely, any quadruple $(g, b, J_{\pm})$ satisfying the condition 
$(\ref{torsion})$ defines an $H$-twisted generalized 
K${\rm \ddot a}$hler structure. 
\end{thm}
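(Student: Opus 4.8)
The plan is to recover the quadruple $(g, b, J_{\pm})$ canonically from $(\mathcal{J}_{1}, \mathcal{J}_{2})$, to read off the displayed block form, and then to match the involutivity of $\mathcal{J}_{1}$ and $\mathcal{J}_{2}$ under $[\cdot,\cdot]_{H}$ with the integrability of $J_{\pm}$ together with the torsion identity $(\ref{torsion})$; the first two steps are linear algebra along the bundle $TM\oplus T^{*}M$, and the last step — translating a Courant-bracket condition into $(\ref{torsion})$ — is where the real work lies.

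First I would note that, since $\mathcal{J}_{1}^{2} = \mathcal{J}_{2}^{2} = -\mathrm{id}$ and $\mathcal{J}_{1}, \mathcal{J}_{2}$ commute, the generalized metric $\mathcal{G} = -\mathcal{J}_{1}\mathcal{J}_{2}$ commutes with both $\mathcal{J}_{1}$ and $\mathcal{J}_{2}$, so these preserve the $\pm 1$-eigenbundles $C_{\pm}$ of $\mathcal{G}$. As $\mathcal{G}$ is $\langle\cdot,\cdot\rangle$-orthogonal, involutive and $\langle\mathcal{G}\cdot,\cdot\rangle$ is positive definite, $C_{\pm}$ both have rank $2n$, $\langle\cdot,\cdot\rangle$ restricts to a $\pm$ definite form on them, and $C_{+}\perp C_{-}$; since $T^{*}M$ is $\langle\cdot,\cdot\rangle$-isotropic, $C_{\pm}\cap T^{*}M = 0$, so $\pi\colon C_{\pm}\to TM$ is an isomorphism and $C_{\pm}$ is the graph of a bundle map $P_{\pm}\colon TM\to T^{*}M$. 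Evaluating the pairing forces the symmetric part of $P_{\pm}$ to be $\pm$ definite, and $C_{+}\perp C_{-}$ forces $P_{+} = -P_{-}^{\top}$; writing $P_{+} = b + g$ with $g$ symmetric and $b$ antisymmetric gives $P_{-} = b - g$ with $g$ a Riemannian metric, uniquely determined, so that $C_{\pm} = \{\,X + (b\pm g)X : X\in TM\,\}$. Conjugating $\mathcal{J}_{1}, \mathcal{J}_{2}$ by the $B$-field shift $\left(\begin{smallmatrix}1 & 0\\ -b & 1\end{smallmatrix}\right)$ reduces everything to the case $b = 0$, at the cost of altering the Courant twisting by an exact form — this is the source of the $db$ in $(\ref{torsion})$. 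In the case $b = 0$, transporting $\mathcal{J}_{1}|_{C_{\pm}}$ through $\pi$ gives $g$-orthogonal almost complex structures $J_{\pm}$ on $TM$ (orthogonality because $\mathcal{J}_{1}$ preserves $\pm g$ on $C_{\pm}$), and $\mathcal{J}_{1}\mathcal{J}_{2} = -\mathcal{G}$ gives $\mathcal{J}_{2} = \mathcal{J}_{1}$ on $C_{+}$ and $\mathcal{J}_{2} = -\mathcal{J}_{1}$ on $C_{-}$, which is the origin of the signs $\pm$. Decomposing an arbitrary $X+\xi$ as $(Y + gY) + (Z - gZ)$ with $Y = \tfrac12(X + g^{-1}\xi)$, $Z = \tfrac12(X - g^{-1}\xi)$, applying $\mathcal{J}_{1,2}$ componentwise and simplifying using $\omega_{\pm}^{-1} = -J_{\pm}g^{-1}$ and $J_{\pm}^{*} = -gJ_{\pm}g^{-1}$ (both consequences of $g$-orthogonality of $J_{\pm}$) produces exactly the displayed matrix with $b = 0$; reinstating the $B$-shift by $b$ restores the outer factors. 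Uniqueness is then automatic since every step was canonical.

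The crux is integrability. After complexifying, let $\ell_{\pm}^{+}$ and $\ell_{\pm}^{-}$ be the $\pm\sqrt{-1}$-eigenbundles of $\mathcal{J}_{1}$ inside $C_{\pm}\otimes\mathbb{C}$; then $(TM\oplus T^{*}M)\otimes\mathbb{C} = \ell_{+}^{+}\oplus\ell_{+}^{-}\oplus\ell_{-}^{+}\oplus\ell_{-}^{-}$ with $\pi(\ell_{\pm}^{+}) = T_{\pm}^{1,0}$ and $\pi(\ell_{\pm}^{-}) = T_{\pm}^{0,1}$, and $L_{1} = \ell_{+}^{+}\oplus\ell_{-}^{+}$, $L_{2} = \ell_{+}^{+}\oplus\ell_{-}^{-}$. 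Closedness of $L_{1}, L_{2}$ (and their conjugates) under $[\cdot,\cdot]_{H}$ makes $\ell_{+}^{+} = L_{1}\cap L_{2}$ and $\ell_{-}^{-} = \bar{L}_{1}\cap L_{2}$ involutive, and projecting by $\pi$ — whose effect on the tangential part of $[\cdot,\cdot]_{H}$ is the ordinary Lie bracket — shows $T_{+}^{1,0}$ and $T_{-}^{0,1}$ are involutive, i.e.\ $J_{+}, J_{-}$ are integrable complex structures. The remaining content sits in the cross-brackets $[\Gamma(\ell_{+}^{+}), \Gamma(\ell_{-}^{+})]_{H}\subset\Gamma(L_{1})$ and $[\Gamma(\ell_{+}^{+}), \Gamma(\ell_{-}^{-})]_{H}\subset\Gamma(L_{2})$: writing sections as $X + (b\pm g)X$ with $X$ of the appropriate $(1,0)$-type, expanding $[\cdot,\cdot]_{H}$, and isolating the $T^{*}M$-component — into which the term $i_{Y}i_{X}H$ and the $db$ coming from the outer factors feed — the resulting tensorial identities collapse, after sorting by bidegree, to exactly $d^{c}_{-}\omega_{-} = -d^{c}_{+}\omega_{+} = H + db$. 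Carrying out this last expansion cleanly — tracking how the $3$-form term distributes over the $C_{\pm}$-splitting and organizing the bookkeeping between $d\omega_{\pm}$, the Dolbeault operators of $J_{\pm}$, and $H + db$ — is the step I expect to be the real obstacle; everything else is formal.

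Finally, for the converse I would define $\mathcal{J}_{1,2}$ from $(g, b, J_{\pm})$ by the displayed formula and verify the algebraic axioms — $\mathcal{J}_{1,2}^{2} = -\mathrm{id}$, preservation of $\langle\cdot,\cdot\rangle$, $\mathcal{J}_{1}\mathcal{J}_{2} = \mathcal{J}_{2}\mathcal{J}_{1}$, and that $-\mathcal{J}_{1}\mathcal{J}_{2} = \left(\begin{smallmatrix}1 & 0\\ b & 1\end{smallmatrix}\right)\left(\begin{smallmatrix}0 & g^{-1}\\ g & 0\end{smallmatrix}\right)\left(\begin{smallmatrix}1 & 0\\ -b & 1\end{smallmatrix}\right)$ is a positive definite generalized metric — all of which reduce via the $B$-shift by $-b$ to the $b = 0$ block matrices and are then immediate from $g$-orthogonality of $J_{\pm}$. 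The $H$-twisted Courant integrability of $\mathcal{J}_{1}$ and $\mathcal{J}_{2}$ then follows by running the computation above backwards: the integrability of $J_{\pm}$ together with $(\ref{torsion})$ is exactly what is needed to make $\ell_{+}^{+}\oplus\ell_{-}^{+}$ and $\ell_{+}^{+}\oplus\ell_{-}^{-}$, hence $L_{1}$ and $L_{2}$, involutive.
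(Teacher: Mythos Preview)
The paper does not actually prove this theorem: it is stated with attribution to Gualtieri's thesis \cite{Gua} and then used as a black box. So there is no ``paper's own proof'' to compare against.

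That said, your proposal is essentially Gualtieri's original argument, and it is correct in outline. The extraction of $(g,b)$ from the $\pm 1$-eigenbundles $C_{\pm}$ of $\mathcal{G}$ as graphs of $b\pm g$, the transport of $\mathcal{J}_{1}|_{C_{\pm}}$ to $J_{\pm}$ via $\pi$, the reduction by $B$-shift to $b=0$, and the identification $L_{1}=\ell_{+}^{+}\oplus\ell_{-}^{+}$, $L_{2}=\ell_{+}^{+}\oplus\ell_{-}^{-}$ are all exactly as in \cite{Gua}. Your honest flagging of the integrability step as ``the real obstacle'' is apt: that computation is genuinely the substance of the theorem, and in Gualtieri's thesis it is carried out by writing sections of $\ell_{\pm}^{\bullet}$ explicitly and expanding the Courant bracket term by term, which is precisely what you describe. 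There is no gap in your strategy; what remains is bookkeeping, not a missing idea.
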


Not every $H$-twisted generalized complex manifold admits an 
$H$-twisted generalized K${\rm \ddot a}$hler structure. However, the 
following lemma claims that every $H$-twisted generalized complex 
manifold always admits a ``generalized almost K${\rm \ddot a}$hler structure". 
This is a generalized complex geometry analogue of the fact 
that a symplectic manifold admits an almost complex structure 
which is compatible with the symplectic structure. 

\begin{lem}\label{compatible1}
Let $(M, \mathcal{J})$ be an $H$-twisted generalized complex 
manifold. Then there exists a generalized almost complex structure 
$\mathcal{J}^{\prime}$ which is compatible with $\mathcal{J}$, that is, 
$\mathcal{J}^{\prime}$ is a generalized almost complex structure which 
commutes with $\mathcal{J}$, and 
$\mathcal{G} = -\mathcal{J}\mathcal{J}^{\prime}$ is a positive 
definite metric. 
\end{lem}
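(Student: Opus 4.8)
The plan is to mimic the classical symplectic construction, where one chooses an auxiliary Riemannian metric, applies the Gram–Schmidt-type polar decomposition to produce a compatible almost complex structure, and then checks that the output is continuous (hence smooth). Here the role of ``symplectic form'' is played by the generalized almost complex structure $\mathcal{J}$ together with the natural inner product $\langle\,\cdot\,,\,\cdot\,\rangle$ of signature $(n,n)$ on $E := TM \oplus T^{*}M$.

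First I would work pointwise on the fibre $E_{p}$. Fix any positive definite inner product $h$ on $E_{p}$ that is compatible with $\langle\,\cdot\,,\,\cdot\,\rangle$ in the weak sense that the latter is represented, relative to $h$, by a symmetric operator; more directly, pick any maximal positive definite subspace for $\langle\,\cdot\,,\,\cdot\,\rangle$, declare it orthogonal to its $\langle\,\cdot\,,\,\cdot\,\rangle$-orthogonal complement, and flip the sign on the latter to obtain a positive definite $h$ for which $\langle\,\cdot\,,\,\cdot\,\rangle = h(S\,\cdot\,,\,\cdot\,)$ with $S$ an $h$-symmetric involution. One can moreover arrange $h$ to be $\mathcal{J}$-invariant by averaging: replace $h$ by $\tfrac{1}{2}(h + h(\mathcal{J}\,\cdot\,,\mathcal{J}\,\cdot\,))$, which is still positive definite and compatible with $\langle\,\cdot\,,\,\cdot\,\rangle$ because $\mathcal{J}$ preserves the inner product. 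Now define $A$ by $\langle A\,\cdot\,,\,\cdot\,\rangle = h(\,\cdot\,,\,\cdot\,)$ — equivalently $A = S$ above — so $A$ is skew-adjoint for $\langle\,\cdot\,,\,\cdot\,\rangle$ once we replace $h$ by the $2$-form-like pairing; the standard move is to form $P := (A A^{*})^{1/2}$ (positive definite, using the polar decomposition of $A$) and set $\mathcal{J}' := -A P^{-1} \cdot \mathcal{J}^{-1}$, adjusting signs so that $\mathcal{J}'^{2} = -\mathrm{id}$, $\mathcal{J}'$ preserves $\langle\,\cdot\,,\,\cdot\,\rangle$, commutes with $\mathcal{J}$, and $\mathcal{G} = -\mathcal{J}\mathcal{J}'$ is positive definite. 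The functional calculus defining $P^{-1/2}$ depends continuously (indeed real-analytically) on $A$, hence smoothly on $p$, so $\mathcal{J}'$ is a smooth section.

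The step I expect to be the main obstacle is the bookkeeping of signs and adjoints needed to guarantee simultaneously that $\mathcal{J}'$ squares to $-\mathrm{id}$, that it commutes with $\mathcal{J}$ rather than anticommutes, and that $\mathcal{G} = -\mathcal{J}\mathcal{J}'$ comes out positive definite — all three must hold, and the signature $(n,n)$ of $\langle\,\cdot\,,\,\cdot\,\rangle$ (as opposed to a definite pairing) means one cannot simply quote the Riemannian polar-decomposition lemma verbatim. The cleanest route is probably to start from an $h$ as above, observe that $\mathcal{J}$ is $h$-skew (since it is $\langle\,\cdot\,,\,\cdot\,\rangle$-skew and $h = \langle A\,\cdot\,,\,\cdot\,\rangle$ with $A$ commuting with $\mathcal{J}$ after the averaging), write its $h$-polar decomposition $\mathcal{J} = \mathcal{J}' |\mathcal{J}|$ with $\mathcal{J}'$ $h$-orthogonal and $h$-skew — hence an $h$-compatible almost complex structure commuting with $\mathcal{J}$ — and then verify that this same $\mathcal{J}'$ preserves $\langle\,\cdot\,,\,\cdot\,\rangle$ and makes $-\mathcal{J}\mathcal{J}' = |\mathcal{J}|$ positive definite for $\langle\,\cdot\,,\,\cdot\,\rangle$. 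Smoothness is then automatic from smooth functional calculus applied to the positive operator $\mathcal{J}\mathcal{J}^{*_{h}}$, and the Lemma follows.
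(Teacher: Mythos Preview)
Your ``cleanest route'' collapses. Once you average $h$ so that it is $\mathcal{J}$-invariant, $\mathcal{J}$ becomes $h$-orthogonal as well as $h$-skew, so $\mathcal{J}^{*_h}\mathcal{J}=\mathrm{id}$ and the $h$-polar decomposition is trivial: $|\mathcal{J}|_h=\mathrm{id}$ and $\mathcal{J}'=\mathcal{J}$. Then $\mathcal{G}=-\mathcal{J}\mathcal{J}'=\mathrm{id}$, and the associated bilinear form $\langle \mathcal{G}\,\cdot\,,\,\cdot\,\rangle = \langle\,\cdot\,,\,\cdot\,\rangle$ has signature $(n,n)$, not positive definite. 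So the very averaging step you introduce to make the bookkeeping tractable destroys the construction. Your earlier sketch with $A$, $P$, and $\mathcal{J}' := -AP^{-1}\mathcal{J}^{-1}$ is too loose to evaluate (what is $A^{*}$ taken with respect to, and why would this particular combination square to $-\mathrm{id}$ and commute with $\mathcal{J}$?), and you yourself flag the sign/adjoint bookkeeping as the obstacle without resolving it.

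The paper's argument fixes this by \emph{not} polar-decomposing $\mathcal{J}$. Instead it treats $\mathcal{W}(u,v):=\langle \mathcal{J}u,v\rangle$ as a genuine symplectic form on the bundle $E=TM\oplus T^{*}M$, chooses an auxiliary positive-definite metric $\tilde{\mathcal{G}}$ on $E$ (from a Riemannian metric on $M$), writes $\mathcal{W}=\tilde{\mathcal{G}}\mathcal{A}$, and takes the $\tilde{\mathcal{G}}$-polar decomposition of $\mathcal{A}$ to produce $\mathcal{J}'=(\sqrt{\mathcal{A}\mathcal{A}^{*}})^{-1}\mathcal{A}$. This is the standard ``symplectic form $+$ Riemannian metric $\Rightarrow$ compatible almost complex structure'' trick, applied to the rank-$2n$ bundle $E$ rather than to $TM$. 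One then checks from $\mathcal{W}=\langle\mathcal{J}\,\cdot\,,\,\cdot\,\rangle$ that $\mathcal{A}\mathcal{J}=-\mathcal{J}\mathcal{A}^{-1}$, whence $\mathcal{J}$ commutes with $\sqrt{\mathcal{A}\mathcal{A}^{*}}$ and therefore with $\mathcal{J}'$; and $\mathcal{G}=-\mathcal{J}\mathcal{J}'=\tilde{\mathcal{G}}\sqrt{\mathcal{A}\mathcal{A}^{*}}$ is manifestly positive definite for $\langle\,\cdot\,,\,\cdot\,\rangle$. The point is that the polar decomposition is applied to the operator relating $\mathcal{W}$ to the auxiliary metric, not to $\mathcal{J}$ itself; this is exactly what keeps the construction from degenerating.
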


\begin{proof}
Choose a Riemannian metric $g$ on $M$ and put 
\begin{equation*}
\tilde{\mathcal{G}} = 
\left(
\begin{array}{ccc}
 0 & g^{-1} \\
 g & 0 \\
\end{array}
\right). 
\end{equation*}
Then $\tilde{\mathcal{G}}$ is a positive definite metric on $TM \oplus T^{*}M$. 
Define a symplectic structure $\mathcal{W}$ on $TM \oplus T^{*}M$ 
by 
\begin{equation*}
\mathcal{W}(X + \alpha, Y + \beta) = 
\langle \mathcal{J}(X + \alpha), Y + \beta \rangle . 
\end{equation*}
Since $\tilde{\mathcal{G}}$ and $\mathcal{W}$ are nondegenerate, 
there exists an endomorphism $\mathcal{A}$ on $TM \oplus T^{*}M$ 
which satisfies 
\begin{equation*}
\mathcal{W}(X + \alpha, Y + \beta) = 
\tilde{\mathcal{G}}(\mathcal{A}(X + \alpha), Y + \beta) 
\end{equation*}
for all $X + \alpha, Y + \beta \in TM \oplus T^{*}M$. 
The endomorphism $\mathcal{A}$ is skew-symmetric 
with respect to the positive definite metric 
$\tilde{\mathcal{G}}$ because 
%\begin{eqnarray*}
%\tilde{\mathcal{G}}(\mathcal{A}^{*}(X + \alpha), Y + \beta) &=& 
%\tilde{\mathcal{G}}(X + \alpha, \mathcal{A}(Y + \beta)) = 
%\tilde{\mathcal{G}}(\mathcal{A}(Y + \beta), X + \alpha) \\ 
%&=& \mathcal{W}(Y + \beta, X + \alpha) = 
%-\mathcal{W}(X + \alpha, Y + \beta) \\
%&=& \tilde{\mathcal{G}}(-\mathcal{A}(X + \alpha), Y + \beta), 
%\end{eqnarray*}
$\mathcal{W} = \tilde{\mathcal{G}}\mathcal{A}$ is an alternating $2$-form 
on $TM \oplus T^{*}M$. 
Let $\mathcal{A}^{*}$ be the adjoint operator of 
$\mathcal{A}$ with respect to $\tilde{\mathcal{G}}$. 
Since $\mathcal{A}$ is invertible, 
$\mathcal{A}\mathcal{A}^{*} = -\mathcal{A}^{2}$ 
is symmetric and positive, that is, 
$(\mathcal{A}\mathcal{A}^{*})^{*} = \mathcal{A}\mathcal{A}^{*}$ and 
\begin{equation*}
\tilde{\mathcal{G}}(\mathcal{A}\mathcal{A}^{*}(X + \alpha), X + \alpha ) > 0
\end{equation*}
for all non-zero $X + \alpha \in TM \oplus T^{*}M$. 
Hence we can define $\sqrt{\mathcal{A}\mathcal{A}^{*}}$ the square root 
of $\mathcal{A}\mathcal{A}^{*}$. Here 
$\sqrt{\mathcal{A}\mathcal{A}^{*}}$ is also symmetric and positive definite. 

Let $\mathcal{J}^{\prime}$ be an endomorphism on $TM \oplus T^{*}M$ defined by 
\begin{equation*}
\mathcal{J}^{\prime} = (\sqrt{\mathcal{A}\mathcal{A}^{*}})^{-1}\mathcal{A}. 
\end{equation*}
Since $\mathcal{A}$ commutes with 
$\sqrt{\mathcal{A}\mathcal{A}^{*}}$, $\mathcal{J}^{\prime}$ 
commutes with both $\mathcal{A}$ and $\sqrt{\mathcal{A}\mathcal{A}^{*}}$. 
Hence we obtain $(\mathcal{J}^{\prime})^{2} = -{\rm id}$. 
By the definition of $\mathcal{A}$, we have 
$\mathcal{A}\mathcal{J} = -\mathcal{J}\mathcal{A}^{-1}$ and hence 
$\mathcal{J}$ commutes with $\sqrt{\mathcal{A}\mathcal{A}^{*}}$. 
In particular, we see that $\mathcal{J}^{\prime}$ commutes 
with $\mathcal{J}$. Moreover, since $\mathcal{J}^{\prime}$ is 
orthogonal with respect to $\tilde{\mathcal{G}}$, we can check easily 
that $\mathcal{J}^{\prime}$ is orthogonal with respect to the natural 
inner product on $TM \oplus T^{*}M$. Hence $\mathcal{J}^{\prime}$ is a generalized almost complex structure on $M$ which commutes with 
$\mathcal{J}$. Finally $\mathcal{G} := -\mathcal{J}\mathcal{J}^{\prime}$ is a 
positive definite metric on $TM \oplus T^{*}M$ since 
$\mathcal{G} = \tilde{\mathcal{G}}\sqrt{\mathcal{A}\mathcal{A}^{*}}$. 
This completes the proof. 
\end{proof}

If $\mathcal{J}^{\prime}$ is a generalized almost complex 
structure which is compatible with an $H$-twisted generalized 
complex structure $\mathcal{J}$, then we can apply the argument 
of Gualtieri in \cite{Gua} and construct a $2$-form $b$, 
a Riemannian metric $g$ and two orthogonal almost complex structures 
$J_{\pm}$ which satisfy the equation
\begin{equation}\label{decomp}
\mathcal{J} = \frac{1}{2}
\left(
\begin{array}{ccc}
 1 & 0 \\
 b & 1 \\
\end{array}
\right)\left(
\begin{array}{ccc}
 J_{+} + J_{-}           & -(\omega^{-1}_{+} - \omega^{-1}_{-}) \\
 \omega_{+} - \omega_{-} & -(J^{*}_{+} + J^{*}_{-})            \\
\end{array}
\right)\left(
\begin{array}{ccc}
 1  & 0 \\
 -b & 1 \\
\end{array}
\right). 
\end{equation}
In general, $J_{+}$ and $J_{-}$ may not be integrable. 

\subsection{Generalized complex submanifolds}
Here we introduce the notion of generalized complex submanifolds in the 
sense of Ben-Bassat and Boyarchenko in \cite{B}. 
Let $i: S \to M$ be a submanifold of an $H$-twisted generalized 
complex manifold $(M, \mathcal{J})$. For each $p \in S$, define a subspace 
$(L_{S})_{p} \subset (T_{p}S \oplus T_{p}^{*}S) \otimes \mathbb{C}$ by 
\begin{equation*}
(L_{S})_{p} = 
\{ X + i^{*}\alpha \in (T_{p}S \oplus T_{p}^{*}S) \otimes \mathbb{C}\ |\ 
X + \alpha \in L_{p} \}, 
\end{equation*}
where $L$ is the $\sqrt{-1}$-eigenspace of $\mathcal{J}$. Clearly 
$(L_{S})_{p}$ is an isotropic subspace of 
$(T_{p}S \oplus T_{p}^{*}S) \otimes \mathbb{C}$. 
Furthermore it is easy to see that 
$\dim_{\mathbb{C}} (L_{S})_{p} = \dim S$ 
and hence $(L_{S})_{p}$ is a maximal isotropic subspace. 
However, the distribution $L_{S} := \cup_{p \in S}(L_{S})_{p}$ may not be 
a subbundle of $(TS \oplus T^{*}S) \otimes \mathbb{C}$ in general. 
We refer the reader to \cite{Cou} for a detailed discussion in 
the case of submanifolds of Dirac manifolds. 
In particular, Courant's arguments can be easily adapted to 
give a necessary condition under which $L_{S}$ is a subbundle of 
$(TS \oplus T^{*}S) \otimes \mathbb{C}$ (cf. \cite{Cou}, Theorem 3.1.1), 
and to prove that if $L_{S}$ is a subbundle and $L$ is integrable, 
then so is $L_{S}$ (cf. \cite{Cou}, Corollary 3.1.4). 

\begin{defn}[Ben-Bassat, Boyarchenko, \cite{B}]
We say that $S$ is a generalized complex submanifold
of $M$ if $L_{S}$ is a subbundle of $(TS \oplus T^{*}S) \otimes \mathbb{C}$ 
and satisfies that $L_{S} \cap \bar L_{S} = \{0\}$. 
\end{defn}

If $i : S \to M$ is a generalized 
complex submanifold of an $H$-twisted generalized complex manifold 
$(M, \mathcal{J})$, then $L_{S}$ gives an $i^{*}H$-twisted generalized 
complex structure on $S$. 

\begin{ex}
Let $S$ be a complex submanifold of a complex manifold $(M, J)$. 
Note that $S$ has a natural complex structure induced by $J$. Then 
we have
\begin{equation*}
L_{S} = T_{1, 0}S \oplus T^{0, 1}S, 
\end{equation*}
which is of course a subbundle of $(TS \oplus T^{*}S) \otimes \mathbb{C}$ 
and satisfies $L_{S} \cap \bar L_{S} = \{0\}$. 
Hence $S$ is a generalized complex 
submanifold of $(M, \mathcal{J}_{J})$. The induced generalized complex 
structure of $S$ is the natural generalized complex structure which is 
induced by the complex structure of $S$. 
\end{ex}
\begin{ex}
Let $i : S \to M$ be a symplectic submanifold of 
a symplectic manifold $(M, \omega)$. Then for the 
generalized complex structure $\mathcal{J}_{\omega}$ induced by 
the symplectic structure $\omega$, we have 
\begin{equation*}
L_{S} = \{ X - \sqrt{-1}i_{X}(i^{*}\omega)\ |\ X \in TS \otimes \mathbb{C} \}, 
\end{equation*}
which coincides with the $\sqrt{-1}$-eigenspace of the generalized 
complex structure $\mathcal{J}_{i^{*}\omega}$ induced by the symplectic 
structure $i^{*}\omega$. In particular $S$ is a generalized complex 
submanifold of $(M, \mathcal{J}_{\omega})$. 
\end{ex}
\begin{ex}
Let $S$ be a Lagrangian submanifold of a symplectic 
manifold $(M, \omega)$. Then we can see easily that 
$L_{S} =TS \otimes \mathbb{C}$ and hence $L_{S}$ is a maximal isotropic 
subbundle of $(TS \oplus T^{*}S) \otimes \mathbb{C}$. However, since 
it is clear that 
$L_{S} \cap \bar L_{S} = TS \otimes \mathbb{C} \not= \{ 0 \}$, 
the submanifold $S$ is not a generalized complex submanifold. 
\end{ex}

In general, it may not be easy to determine if 
a given submanifold is a generalized complex submanifold. 
Here we give a simple sufficient condition. 

\begin{prop}\label{prop21}
Let $(M, \mathcal{J})$ be an $H$-twisted generalized complex manifold, 
and $\mathcal{J}^{\prime}$ be a generalized almost complex structure of 
$M$ which is compatible with $\mathcal{J}$. 
We denote $(g, b, J_{\pm})$ the corresponding quadruple. If $i : S \to M$ 
is an almost complex submanifold of $M$ with respect to both 
$J_{+}$ and $J_{-}$, then $S$ is a generalized complex 
submanifold of $(M, \mathcal{J})$. 
\end{prop}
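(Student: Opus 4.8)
The plan is to work pointwise and exploit the explicit block form of $\mathcal{J}$ coming from the compatible quadruple $(g,b,J_{\pm})$ in equation $(\ref{decomp})$. First I would reduce to the case $b=0$. The $2$-form $b$ only enters $\mathcal{J}$ through conjugation by $\left(\begin{smallmatrix} 1 & 0 \\ -b & 1 \end{smallmatrix}\right)$, which is a $B$-field shift; by the $B$-field shift example the $\sqrt{-1}$-eigenspace of $\mathcal{J}$ is $L=\{X+\alpha+i_{X}b \mid X+\alpha\in L_{0}\}$, where $L_{0}$ is the eigenspace of the $b=0$ structure $\mathcal{J}_{0}$. Pulling back via $i$, one checks $i^{*}(i_{X}b)=i_{X}(i^{*}b)$ for $X\in TS$, so $L_{S}$ is the $(i^{*}b)$-field shift of $(L_{0})_{S}$; hence $L_{S}$ is a subbundle with $L_{S}\cap\bar L_{S}=\{0\}$ if and only if the same holds for $(L_{0})_{S}$. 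So it suffices to treat $\mathcal{J}=\mathcal{J}_{0}$ with eigenspace
\begin{equation*}
L_{0}=\{X+Y-\sqrt{-1}(i_{X}\omega_{+}-i_{Y}\omega_{-}) \mid X\in T_{1,0}^{J_{+}}M,\ Y\in T_{1,0}^{J_{-}}M\}
\end{equation*}
(the standard description of the generalized complex structure built from two Hermitian structures sharing the metric $g$); here I am using $\omega_{\pm}=gJ_{\pm}$.

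Next, fix $p\in S$ and use the hypothesis that $S$ is almost complex for both $J_{+}$ and $J_{-}$ at $p$, so $J_{\pm}(T_{p}S)=T_{p}S$ and $T_{p}S\otimes\mathbb{C}$ splits compatibly under each. I would compute $(L_{S})_{p}$ directly: an element of $L_{0}$ restricts into $T_{p}S\oplus T_{p}^{*}S$ precisely when its tangent part $X+Y$ lies in $T_{p}S\otimes\mathbb{C}$, i.e.\ when $X\in T_{1,0}^{J_{+}}M\cap(T_{p}S\otimes\mathbb{C})=T_{1,0}^{J_{+}}S$ and likewise $Y\in T_{1,0}^{J_{-}}S$, because $T_{p}S$ is invariant under both $J_{+}$ and $J_{-}$ and hence under the projections onto their $(1,0)$-parts. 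For such $X,Y$ the covector part is $-\sqrt{-1}(i_{X}\omega_{+}-i_{Y}\omega_{-})$, whose pullback is $-\sqrt{-1}(i_{X}(i^{*}\omega_{+})-i_{Y}(i^{*}\omega_{-}))$ and $i^{*}\omega_{\pm}=g|_{S}\cdot J_{\pm}|_{S}$ since $S$ is $J_{\pm}$-invariant and $g$ restricts to a metric on $S$. Therefore
\begin{equation*}
(L_{S})_{p}=\{X+Y-\sqrt{-1}(i_{X}(i^{*}\omega_{+})-i_{Y}(i^{*}\omega_{-})) \mid X\in T_{1,0}^{J_{+}}S,\ Y\in T_{1,0}^{J_{-}}S\},
\end{equation*}
which is exactly the $\sqrt{-1}$-eigenspace of the generalized almost complex structure on $S$ determined by the quadruple $(g|_{S},0,J_{\pm}|_{S})$. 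This formula depends on $p$ only through smoothly varying data, so $L_{S}$ is a smooth subbundle; and since $(g|_{S},J_{\pm}|_{S})$ are genuine compatible Hermitian structures on $S$, the associated generalized almost complex structure satisfies $L_{S}\cap\bar L_{S}=\{0\}$ automatically (equivalently, $g|_{S}$ positive definite forces the intersection to be trivial, by the same computation as in Lemma \ref{compatible1}).

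The main obstacle I anticipate is the first step: verifying cleanly that $T_{p}S\otimes\mathbb{C}$, being invariant under the two endomorphisms $J_{+}$ and $J_{-}$, decomposes so that the tangent part of a vector in $L_{0}$ lands in $T_{p}S$ exactly when \emph{both} its $J_{+}$-$(1,0)$ and $J_{-}$-$(1,0)$ constituents do. This is where the two separate almost-complex-submanifold hypotheses are genuinely used and where one must be careful that no "mixed" directions survive; once that bookkeeping is done, everything else is the algebra above together with the $B$-field-shift reduction. A secondary point to handle with care is that $(\ref{decomp})$ gives $J_{\pm}$ only as almost complex structures, so I never claim integrability of $L_{S}$ — only that $S$ is a generalized complex submanifold in the sense of the definition, which requires just the subbundle property and $L_{S}\cap\bar L_{S}=\{0\}$, both of which the pointwise formula delivers.
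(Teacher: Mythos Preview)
Your approach is essentially the paper's: both use the explicit description of $L$ in terms of the quadruple $(g,b,J_{\pm})$ and read off $L_{S}$ directly. Your preliminary reduction to $b=0$ via a $B$-field shift is correct but unnecessary---the paper simply works with the form $L=\{X+(b+g)X : X\in T_{1,0}^{+}M\}\oplus\{Y+(b-g)Y : Y\in T_{1,0}^{-}M\}$ throughout---and your argument for $L_{S}\cap\bar L_{S}=\{0\}$ (recognizing $L_{S}$ as the eigenbundle of the generalized almost complex structure on $S$ built from $(g|_{S},0,J_{\pm}|_{S})$) is a pleasant variant of the paper's more direct check.

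There is, however, a genuine gap at exactly the point you flag as the ``main obstacle''. Your claim that $X+Y\in T_{p}S\otimes\mathbb{C}$ holds \emph{precisely when} $X\in T_{1,0}^{J_{+}}S$ and $Y\in T_{1,0}^{J_{-}}S$ is false as stated: the decomposition of an element of $L_{0}$ into its $T_{1,0}^{J_{+}}M$ and $T_{1,0}^{J_{-}}M$ pieces is \emph{not} obtained by applying the $J_{+}$- or $J_{-}$-projection to the tangent part, so invariance of $T_{p}S$ under those projections does not force $X,Y\in T_{p}S$ individually. For instance, if $X^{\perp}\in T_{1,0}^{J_{+}}((T_{p}S)^{\perp})\cap T_{1,0}^{J_{-}}((T_{p}S)^{\perp})$ is nonzero (which can happen whenever $J_{+}$ and $J_{-}$ agree on part of the normal bundle), then $X=X^{\perp}$, $Y=-X^{\perp}$ gives $X+Y=0\in T_{p}S$ with $X,Y\notin T_{p}S$. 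The conclusion you want is nevertheless correct: either observe that after pulling back by $i^{*}$ the normal components $X^{\perp},Y^{\perp}$ contribute nothing (since $g(X^{\perp},\cdot)$ vanishes on $T_{p}S$), so the element of $(L_{S})_{p}$ coincides with the one coming from $(X^{\parallel},Y^{\parallel})\in T_{1,0}^{J_{+}}S\times T_{1,0}^{J_{-}}S$; or, more simply, note that your candidate subspace is contained in $(L_{S})_{p}$ and already has complex dimension $\dim_{\mathbb{R}}S=\dim_{\mathbb{C}}(L_{S})_{p}$, hence equals it. With this fix your proof goes through.
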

\begin{proof}
Let $T_{1, 0}^{\pm}M \subset TM \otimes \mathbb{C}$ be the 
$\sqrt{-1}$-eigenspace of $J_{\pm}$. Then we can check easily that 
$L$ is given by 
\begin{equation*}
L = 
\{ X + (b + g)(X)\ |\ X \in T_{1, 0}^{+}M \} 
\oplus \{ Y + (b - g)(Y)\ |\ Y \in T_{1, 0}^{-}M \}. 
\end{equation*}
Hence for a given almost complex submanifold $S$, 
the subspace $L_{S}$ can be written as 
\begin{equation*}
L_{S} = 
\{ X + i^{*}(b + g)(X)\ |\ X \in T_{1, 0}^{+}S \} 
\oplus \{ Y + i^{*}(b - g)(Y)\ |\ Y \in T_{1, 0}^{-}S \}, 
\end{equation*}
where $T_{1, 0}^{\pm}S$ is the $\sqrt{-1}$-eigenspace of the restriction of 
$J_{\pm}$ to $S$. In particular, $L_{S}$ is a subbundle of 
$(TS \oplus T^{*}S) \otimes \mathbb{C}$. 
In addition if 
$(X + i^{*}(b + g)(X)) + (Y + i^{*}(b - g)(Y)) \in L_{S} \cap \bar L_{S}$ 
for $X \in T_{1, 0}^{+}S$ and $Y \in T_{1, 0}^{-}S$, then we see that 
$(X + (b + g)(X)) + (Y + (b - g)(Y)) \in L \cap \bar L = \{ 0\} $. 
Thus we obtain $X = Y = 0$, and hence $L_{S} \cap \bar L_{S} = \{ 0 \}$. 
This proves the proposition. 
\end{proof}

\begin{rem}
If $M$ is an orbifold and $H$ is a closed $3$-form on $M$, we can 
define the notions of $H$-twisted generalized complex structures of 
$M$ in usual way. The detailed description is as follows. 
A definition of orbifolds can be seen in \cite{IS} for example. 
Let $M$ be an orbifold and $(V_{i},G_{i}, \pi_{i})_{i \in I}$ 
be a local uniformizing system of $M$. A generalized almost complex 
structure $\mathcal{J}$ of $M$ is a family of endomorphisms 
$\{ \mathcal{J}_{i} : TV_{i} \oplus T^{*}V_{i} \to TV_{i} \oplus T^{*}V_{i} \}_{i \in I}$ 
such that $\mathcal{J}_{i}$ is a generalized almost complex structure 
on $V_{i}$ for each $i \in I$ and they are equivariant under 
the local group actions and compatible with respect to the injections. 
If each $\mathcal{J}_{i}$ is integrable 
with respect to $H$-twisted Courant brackets, then $\mathcal{J}$ is called 
to be integrable and we call it a $H$-twisted generalized complex structure of 
an orbifold $M$. 

In the case that $(M, \mathcal{J})$ is an $H$-twisted generalized complex 
orbifold, we can describe the same notions in section $2$, 
and the assertions in section $2$ still hold in the language of orbifolds. 
\end{rem}

\section{Hamiltonian actions and generalized moment maps}
\subsection{Hamiltonian actions on $H$-twisted generalized complex manifolds}
In this section we recall the definition of 
Hamiltonian actions on $H$-twisted generalized complex manifolds 
given by Lin and Tolman in \cite{Lin}. 

\begin{defn}[Y. Lin and S. Tolman, \cite{Lin}]
Let a compact Lie group $G$ with its Lie algebra $\mathfrak{g}$ 
act on an $H$-twisted generalized complex manifold $(M, \mathcal{J})$ 
preserving $\mathcal{J}$, where $H \in \Omega^{3}(M)$ is a 
$G$-invariant closed $3$-form. The action of $G$ is said to be 
Hamiltonian if there exists a $G$-equivariant 
smooth function $\mu : M \to \mathfrak{g}^{*}$, 
called the generalized moment map, and a $\mathfrak{g}^{*}$-valued 
one form $\alpha \in \Omega^{1}(M; \mathfrak{g}^{*})$, 
called the moment one form, such that 
\begin{enumerate}
\item $\xi_{M} - \sqrt{-1}(d\mu^{\xi} + \sqrt{-1}\alpha^{\xi})$ lies in 
$L$ for all $\xi \in \mathfrak{g}$, where $\xi_{M}$ denotes 
the induced vector field on $M$, and 
\item $i_{\xi_{M}}H = d\alpha^{\xi}$ for all $\xi \in \mathfrak{g}$. 
\end{enumerate}
\end{defn}
Since $L$ is an isotropic subbundle, we have 
\begin{equation*}
\left \langle \xi_{M} - \sqrt{-1}(d\mu^{\xi} + \sqrt{-1}\alpha^{\xi}),\ \xi_{M} - \sqrt{-1}(d\mu^{\xi} + \sqrt{-1}\alpha^{\xi}) \right \rangle = 0 
\end{equation*}
and hence $\iota_{\xi_{M}}\alpha^{\xi} = \iota_{\xi_{M}}d\mu^{\xi} = 0$ 
for each $\xi \in \mathfrak{g}$. 
\begin{ex}
Let a compact Lie group $G$ act on a symplectic manifold $(M, \omega)$ 
preserving the symplectic structure $\omega$, and 
$\mu : M \to \mathfrak{g}^{*}$ 
be an usual moment map, that is, $\mu$ is $G$-equivariant and 
$i_{\xi_{M}}\omega = d\mu^{\xi}$ for all $\xi \in \mathfrak{g}$. 
Then $G$ also preserves $\mathcal{J}_{\omega}$, $\mu$ is also a 
generalized moment map, and $\alpha = 0$ is a moment one form for 
this action. Hence the $G$-action on $(M, \mathcal{J}_{\omega})$ 
is Hamiltonian. 
\end{ex}
\begin{ex}
Let $(M, J)$ be a complex manifold and $G$ act on 
$(M, \mathcal{J}_{J})$ in a Hamiltonian way. Then $G$ also 
preserves the original complex structure $J$. 
Since $L_ {J} = T_{1, 0}M \oplus T^{0,1}M$ and $\xi_{M} \in \pi(L_{J})$, 
we have $\xi_{M} = 0$ for all $\xi \in \mathfrak{g}$. Thus if $G$ is 
connected, the $G$-action on $M$ must be trivial. 
\end{ex}
\begin{ex}
Let $G$ act on an $H$-twisted generalized complex manifold 
$(M, \mathcal{J})$ with a generalized moment map 
$\mu : M \to \mathfrak{g}^{*}$ and a moment 
one form $\alpha \in \Omega^{1}(M; \mathfrak{g}^{*})$. 
If $B \in \Omega^{2}(M)^{G}$ is closed, then $G$ 
acts on $M$ preserving the $B$-field shift of $\mathcal{J}$ with 
generalized moment map $\mu$ and moment one form $\alpha^{\prime}$, where 
$(\alpha^{\prime})^{\xi} = \alpha^{\xi} + i_{\xi_{M}}B$ for all 
$\xi \in \mathfrak{g}$. 
\end{ex}

By the definition, we can treat the notion of 
generalized moment maps as a generalization of the notion of moment maps 
in symplectic geometry. Generalized moment maps are studied by Lin and 
Tolman in \cite{Lin}. In their paper, they showed that a reduction 
theorem for Hamiltonian actions of compact Lie groups on 
$H$-twisted generalized complex manifold holds. We shall use this fact later. 
Note that since $i_{\xi_{M}}H = d\alpha^{\xi}$ and 
$\iota_{\xi_{M}}\alpha^{\xi} = 0$ for each $\xi \in \mathfrak{g}$, 
we can see $H + \alpha$ as an equivariantly closed form. 

\begin{lem}[Y. Lin, S. Tolman, \cite{Lin}]\label{lem31}
Let a compact Lie group $G$ act freely on a manifold $M$. Let $H$ be an 
invariant closed $3$-form and $\alpha$ be an equivariant mapping 
from $\mathfrak{g}$ to $\Omega^{1}(M)$. 
Fix a connection $\theta \in \Omega(M; \mathfrak{g})$. 
Then if $H + \alpha$ is equivariantly closed, 
there exists a natural form $\Gamma \in \Omega^{2}(M)^{G}$ such that 
$i_{\xi_{M}}\Gamma = \alpha^{\xi}$ for all $\xi \in \mathfrak{g}$. 
In particular, 
$H + \alpha + d_{G}\Gamma \in \Omega^{3}(M) \subset \Omega_{G}(M)$, 
where $\Omega_{G}(M)$ is the set of equivariant differential forms of $M$ 
and $d_{G}$ denotes the equivariant exterior differential, 
is closed and basic and so descends to a closed form 
$\tilde{H} \in \Omega^{3}(M/G)$ such that the cohomology class 
$[\tilde{H}]$ is the image of $[H + \alpha]$ under the Kirwan map.
\end{lem}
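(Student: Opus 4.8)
The plan is to produce the form $\Gamma$ explicitly from the connection $\theta$, exactly as one does in the Chern–Weil / Cartan model, and then check the two claims (the contraction identity and the descent to $M/G$) by direct equivariant-de-Rham computation. Since $G$ acts freely, $M \to M/G$ is a principal $G$-bundle; fix the connection $\theta \in \Omega^1(M;\mathfrak g)$, so that $\theta(\xi_M) = \xi$ for all $\xi \in \mathfrak g$ and $\theta$ is $G$-equivariant. Writing $\alpha : \mathfrak g \to \Omega^1(M)$ for the moment one form viewed as an equivariant linear map, I would set
\begin{equation*}
\Gamma \;=\; -\,\langle \alpha \wedge \theta \rangle \;\in\; \Omega^2(M),
\end{equation*}
meaning: pick a basis $\{\xi_a\}$ of $\mathfrak g$ with dual basis $\{\xi^a\}$, write $\theta = \sum_a \theta^a \otimes \xi_a$, and put $\Gamma = -\sum_a \alpha^{\xi_a} \wedge \theta^a$ (one checks this is independent of the basis because $\alpha$ is $\mathfrak g^*$-valued and $\theta$ is $\mathfrak g$-valued, so the sum is the image under the natural pairing $\mathfrak g^* \otimes \mathfrak g \to \mathbb R$). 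Invariance of $\Gamma$ is immediate from invariance of $\alpha$ and of $\theta$.

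Next I would verify $i_{\xi_M}\Gamma = \alpha^\xi$. Contracting $\Gamma = -\sum_a \alpha^{\xi_a}\wedge\theta^a$ with $\xi_M$ and using $i_{\xi_M}\alpha^{\xi_a} = 0$ (this is exactly the isotropy identity $\iota_{\xi_M}\alpha^\xi = 0$ derived just before Lemma \ref{lem31}) together with $\theta^a(\xi_M) = \xi^a(\xi)$, one gets $i_{\xi_M}\Gamma = \sum_a \alpha^{\xi_a}\,\xi^a(\xi) = \alpha^{\xi}$, as desired. Then I would compute the equivariant differential $d_G(H + \alpha + d_G\Gamma)$. By hypothesis $H + \alpha$ is equivariantly closed, and $d_G^2 = 0$ on the invariant form $\Gamma$ because $d_G^2 = \mathcal L$ (the Cartan relation) and $\Gamma$ is $G$-invariant; hence $H + \alpha + d_G\Gamma$ is equivariantly closed. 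To see it lies in $\Omega^3(M)$, i.e. has no polynomial part, note $d_G\Gamma = d\Gamma - \iota\Gamma$ where $(\iota\Gamma)(\xi) = i_{\xi_M}\Gamma = \alpha^\xi$ by the previous step, so the degree-two-in-the-polynomial-variable part of $\alpha + d_G\Gamma$ is $\alpha - \iota\Gamma = 0$; thus $H + \alpha + d_G\Gamma = H + d\Gamma \in \Omega^3(M)$. Being equivariantly closed and lying in ordinary degree $3$ forces it to be closed and (contracting with $\xi_M$ gives $i_{\xi_M}(H + d\Gamma) = d\alpha^\xi - d\alpha^\xi = 0$, using $i_{\xi_M}H = d\alpha^\xi$) horizontal, hence basic, so it descends to $\tilde H \in \Omega^3(M/G)$.

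The remaining point — that $[\tilde H]$ is the image of $[H+\alpha]$ under the Kirwan map $H^*_G(M) \to H^*(M/G)$ — is really a formality: the Kirwan map for a free action is the isomorphism $H^*_G(M) \cong H^*(M/G)$ induced by pulling back basic forms, and we have just exhibited $H + \alpha + d_G\Gamma$ as an equivariant representative of $[H+\alpha]$ that happens to be basic, so by definition its class maps to $[\tilde H]$. I expect the only genuinely fiddly step to be the bookkeeping in the $\mathfrak g$/$\mathfrak g^*$ pairing when differentiating $\Gamma$ — in particular making sure the curvature terms $d\theta^a$ that appear in $d\Gamma$ are correctly absorbed and that the naturality (basis-independence) claim is clean — but no conceptual obstacle arises, since everything is forced once $\Gamma$ is chosen to satisfy $i_{\xi_M}\Gamma = \alpha^\xi$.
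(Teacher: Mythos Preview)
The paper does not prove this lemma; it is quoted from Lin--Tolman, so there is no in-paper argument to compare against. Nevertheless, your proposed proof has a genuine gap.

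Your verification of $i_{\xi_M}\Gamma = \alpha^\xi$ invokes $i_{\xi_M}\alpha^{\xi_a} = 0$ for \emph{every} pair $(\xi,\xi_a)$, citing the identity $\iota_{\xi_M}\alpha^\xi = 0$ derived just before the lemma. But that identity holds only when the \emph{same} $\xi$ appears in both slots. Equivariant closedness of $H+\alpha$ forces the quadratic function $\xi\mapsto i_{\xi_M}\alpha^\xi$ to vanish identically, which by polarization yields only the symmetrized condition $i_{\xi_M}\alpha^\eta + i_{\eta_M}\alpha^\xi = 0$, not the vanishing of each term separately. A concrete counterexample: let $G=T^2$ act on itself by translation, take $H=0$, $\theta^a = dx_a$, $\alpha^{\xi_1}=dx_2$, $\alpha^{\xi_2}=-dx_1$. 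Then $H+\alpha$ is equivariantly closed, yet your formula gives $\Gamma = 2\,dx_1\wedge dx_2$, so that $i_{\partial_1}\Gamma = 2\,dx_2 \neq \alpha^{\xi_1}$.

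The remedy is to add a purely vertical correction built from the antisymmetric functions $A_{ab}=i_{(\xi_a)_M}\alpha^{\xi_b}$; for instance
\[
\Gamma \;=\; -\sum_a (\alpha^{\xi_a})_{\mathrm{hor}}\wedge\theta^a \;-\; \tfrac{1}{2}\sum_{a,b} A_{ab}\,\theta^a\wedge\theta^b,
\]
with $(\cdot)_{\mathrm{hor}}$ the horizontal part relative to $\theta$, does satisfy $i_{\xi_M}\Gamma=\alpha^\xi$ (using the antisymmetry of $A$). Once this corrected $\Gamma$ is in place, the rest of your argument --- the cancellation of the polynomial part, basicness of $H+d\Gamma$, and the Kirwan-map identification --- goes through unchanged.
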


\begin{thm}[Y. Lin, S. Tolman, \cite{Lin}]\label{reduction}
Let a compact Lie group $G$ act on an $H$-twisted generalized complex
manifold $(M, \mathcal{J})$ in a Hamiltonian way with 
a generalized moment map $\mu : M \to \mathfrak{g}^{*}$ and 
a moment one form $\alpha \in \Omega^{1}(M; \mathfrak{g}^{*})$. 
Let $\mathcal{O}_{a}$ be a coadjoint orbit through $a \in \mathfrak{g}^{*}$ 
so that $G$ acts freely on $\mu^{-1}(\mathcal{O}_{a})$. 
Given a connection on $\mu^{-1}(\mathcal{O}_{a})$, 
the twisted generalized complex quotient $M_{a} = \mu^{-1}(\mathcal{O}_{a})/G$ 
inherits an $\tilde{H}$-twisted generalized complex structure 
$\tilde{\mathcal{J}}$, where $\tilde{H}$ is defined as in Lemma 
\ref{lem31}. Up to $B$-field shift, $\tilde{\mathcal{J}}$ is independent of 
the choice of connection. Finally, for each $p \in \mathcal{O}_{a}$, 
\begin{equation*}
{\rm type}(\tilde{\mathcal{J}})_{[p]} = {\rm type}(\mathcal{J})_{p}. 
\end{equation*}
\end{thm}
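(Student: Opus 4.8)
The plan is to prove the reduction theorem of Lin--Tolman (Theorem \ref{reduction}) by transporting the $\sqrt{-1}$-eigenbundle $L$ down to the quotient $M_{a} = \mu^{-1}(\mathcal{O}_{a})/G$. First I would reduce to the case of a single coadjoint orbit point: since $\mathcal{O}_{a} \cong G/G_{a}$, standard shifting-trick reasoning lets one replace $\mu^{-1}(\mathcal{O}_{a})/G$ by $\mu^{-1}(a)/G_{a}$, so it suffices to treat the level set $Z := \mu^{-1}(a)$ with a free $G_{a}$-action; I would write $\pi : Z \to M_{a}$ for the projection and $j : Z \hookrightarrow M$ for the inclusion. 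Using Lemma \ref{lem31}, fix a connection $\theta$ and produce the form $\Gamma \in \Omega^{2}(Z)^{G_{a}}$ with $i_{\xi_{M}}\Gamma = \alpha^{\xi}$, so that $j^{*}H + d_{G_{a}}\Gamma$ descends to the closed $3$-form $\tilde{H}$ on $M_{a}$.

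The key step is to exhibit the candidate bundle $\tilde{L} \subset (TM_{a}\oplus T^{*}M_{a})\otimes\mathbb{C}$ and show it is a well-defined $\tilde{H}$-twisted generalized complex structure. The natural definition is: push down the distribution obtained by first restricting $L$ along the "moment data" and then quotienting. Concretely, let $\mathfrak{a}$ denote the Lie algebra of $G_{a}$; the conditions $\xi_{M} - \sqrt{-1}(d\mu^{\xi} + \sqrt{-1}\alpha^{\xi}) \in L$ together with $i_{\xi_{M}}\alpha^{\xi} = i_{\xi_{M}}d\mu^{\xi} = 0$ say that the "generalized distribution" spanned by these sections, intersected with $TZ \oplus T^{*}Z$ and with the $\mathfrak{a}$-directions removed by $\Gamma$-correction, is $G_{a}$-basic. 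I would show (i) this corrected distribution is a subbundle of $(TZ \oplus T^{*}Z)\otimes\mathbb{C}$ of the right rank — this is where the freeness of the action and the fact that $a$ is a value of $\mu$ on which $G_{a}$ acts freely guarantee constant rank; (ii) it is $G_{a}$-invariant and horizontal, hence descends to a subbundle $\tilde{L}$ on $M_{a}$; (iii) $\tilde{L}$ is maximal isotropic for the natural pairing and satisfies $\tilde{L} \cap \bar{\tilde{L}} = \{0\}$, which both follow from the corresponding properties of $L$ by a linear-algebra argument at each point, tracking how the $\Gamma$-shift (a $B$-field shift in the fiber directions) preserves isotropy and the transversality $L \cap \bar{L} = \{0\}$; and (iv) integrability: $\tilde{L}$ is closed under the $\tilde{H}$-twisted Courant bracket. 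For (iv) I would lift sections of $\tilde{L}$ to basic sections on $Z$, use that the $H$-twisted Courant bracket of sections of $L$ lies in $L$, and check that the $\Gamma$-correction together with the identity $\tilde{H} = j^{*}H + d_{G_{a}}\Gamma$ exactly accounts for the difference between the $H$-twisted bracket upstairs and the $\tilde{H}$-twisted bracket downstairs — this is the computation behind the $B$-field-shift invariance asserted in the statement.

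The main obstacle is step (iv) combined with the subbundle claim (i): a priori $L$ restricted to $Z$ need not interact cleanly with $TZ \oplus T^{*}Z$, and one must verify that, after removing the $\mathfrak{a}$-directions and applying the $\Gamma$-shift, the resulting object has constant rank. The point is that the moment-map conditions force $\xi_{M} - \sqrt{-1}(d\mu^{\xi}+\sqrt{-1}\alpha^{\xi}) \in L$ to already be "half in $Z$": $d\mu^{\xi}$ annihilates $TZ$ in the appropriate sense along the level set, and $\xi_{M}$ is tangent to $Z$, so these sections descend to give precisely the fibers of $\tilde{L}$ along the $G_{a}$-orbit directions complementary to the horizontal part. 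Making this rank count precise — that the horizontal corrected distribution has complex rank $\dim_{\mathbb{C}} M_{a}$ — requires the nondegeneracy built into $L \cap \bar{L} = \{0\}$ and is the technical heart of the argument.

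Finally, the type assertion ${\rm type}(\tilde{\mathcal{J}})_{[p]} = {\rm type}(\mathcal{J})_{p}$ follows by computing $\pi(\tilde{L})$ at a point: the projection of the corrected distribution to $TM_{a}\otimes\mathbb{C}$ is the image of $\pi(L \cap (TZ\otimes\mathbb{C} + \dots))$ under $d\pi$, and since the $\Gamma$-shift does not change the tangent projection and the quotient simply removes the orbit directions (which are spanned by the $\xi_{M}$, all lying in $\pi(L)$), the codimension of $\pi(\tilde{L})$ in $TM_{a}\otimes\mathbb{C}$ equals that of $\pi(L)$ in $TM\otimes\mathbb{C}$. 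I would close by noting independence of the connection up to $B$-field shift: two connections $\theta, \theta'$ give $\Gamma, \Gamma'$ differing by a basic $2$-form, which descends to a closed $2$-form on $M_{a}$ and changes $\tilde{\mathcal{J}}$ exactly by the corresponding $B$-field shift, as in the $B$-field-shift example.
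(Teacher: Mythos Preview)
The paper does not prove this theorem. Theorem \ref{reduction} is stated with attribution to Lin and Tolman \cite{Lin} and is quoted as an external result; immediately after the statement the paper simply remarks on the orbifold version and moves on without giving any argument. There is therefore no proof in the paper to compare your proposal against.

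That said, your outline is broadly the shape of the standard argument one finds in Lin--Tolman: restrict $L$ to the level set, use the $\Gamma$-shift from Lemma \ref{lem31} to make the moment-one-form contributions basic, push the resulting maximal isotropic down to the quotient, and check integrability against the descended $3$-form $\tilde{H}$. Your identification of the rank/clean-intersection issue in step (i) and the bracket computation in step (iv) as the technical core is accurate, and your explanation of the type equality and of connection-independence up to $B$-field shift is correct in spirit. If you want to verify the details, consult \cite{Lin} directly; nothing in the present paper will help you fill them in.
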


In the case that $(M, \mathcal{J})$ is an $H$-twisted generalized complex 
orbifold, we can define the notion of Hamiltonian actions of 
a compact Lie group on $(M, \mathcal{J})$ in usual way. 
In this case, the reduction theorem still holds in the language of 
orbifolds. The detailed statement is as follows. 
Let a compact Lie group $G$ act on an $H$-twisted generalized complex
orbifold $(M, \mathcal{J})$ in a Hamiltonian way with 
a generalized moment map $\mu : M \to \mathfrak{g}^{*}$ and 
a moment one form $\alpha \in \Omega^{1}(M; \mathfrak{g}^{*})$. 
For a coadjoint orbit $\mathcal{O}_{a}$ through $a \in \mathfrak{g}^{*}$, 
suppose that the $G$-action on $\mu^{-1}(\mathcal{O}_{a})$ is locally free. 
Given a connection on $\mu^{-1}(\mathcal{O}_{a})$, 
the twisted generalized complex quotient $M_{a} = \mu^{-1}(\mathcal{O}_{a})/G$ 
is an orbifold and inherits an $\tilde{H}$-twisted generalized complex 
structure $\tilde{\mathcal{J}}$, where $\tilde{H}$ is defined as in 
Lemma \ref{lem31}. Up to $B$-field shift, $\tilde{\mathcal{J}}$ is 
independent of the choice of connection and the type is preserved. 

Before we begin a proof of Theorem A, we shall prove a remarkable fact 
of generalized moment maps. 
At first we prove the following lemmata. 

\begin{lem}\label{compatible2}
Let a compact Lie group $G$ act on an $H$-twisted generalized complex 
manifold $(M, \mathcal{J})$ preserving $\mathcal{J}$. Then there exists a 
$G$-invariant generalized almost complex structure which is compatible 
with $\mathcal{J}$. 
\end{lem}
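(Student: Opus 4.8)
The plan is to run the construction of Lemma \ref{compatible1} equivariantly, using an averaging argument to produce all the auxiliary data $G$-invariantly. First I would choose a $G$-invariant Riemannian metric $g$ on $M$; this is possible because $G$ is compact, so one may start with any metric and average it over $G$ using Haar measure. This gives a $G$-invariant positive definite metric $\tilde{\mathcal{G}}$ on $TM \oplus T^{*}M$ of the form used in Lemma \ref{compatible1}, where $G$ acts on $TM \oplus T^{*}M$ in the natural way (the differential of the action on $TM$ together with the inverse-transpose on $T^{*}M$).

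Next, since $G$ preserves $\mathcal{J}$ and preserves the natural inner product (the action on $TM \oplus T^{*}M$ is by orthogonal transformations for $\langle\,,\,\rangle$), the $2$-form $\mathcal{W}(X+\alpha, Y+\beta) = \langle \mathcal{J}(X+\alpha), Y+\beta\rangle$ is $G$-invariant as well. The endomorphism $\mathcal{A}$ defined by $\mathcal{W} = \tilde{\mathcal{G}}\mathcal{A}$ is then $G$-equivariant, being built from two $G$-invariant tensors; consequently $\mathcal{A}\mathcal{A}^{*}$ is $G$-equivariant and positive definite symmetric with respect to the $G$-invariant metric $\tilde{\mathcal{G}}$, so its square root $\sqrt{\mathcal{A}\mathcal{A}^{*}}$ — which is a limit of polynomials in $\mathcal{A}\mathcal{A}^{*}$, or obtained by functional calculus fiberwise — is again $G$-equivariant. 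Therefore $\mathcal{J}^{\prime} = (\sqrt{\mathcal{A}\mathcal{A}^{*}})^{-1}\mathcal{A}$ is $G$-equivariant, i.e., $G$-invariant as a section of $\mathrm{End}(TM \oplus T^{*}M)$. By Lemma \ref{compatible1} it is a generalized almost complex structure compatible with $\mathcal{J}$, and by construction it is $G$-invariant, which completes the proof.

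I do not expect a genuine obstacle here: the only point requiring a word of care is that the square-root operation preserves $G$-equivariance, which follows because the polar/spectral decomposition is canonical (equivalently, $\sqrt{\mathcal{A}\mathcal{A}^{*}}$ commutes with every endomorphism commuting with $\mathcal{A}\mathcal{A}^{*}$, in particular with the $G$-action). The rest is a verbatim repetition of the proof of Lemma \ref{compatible1} with every object now $G$-invariant, so I would state the proof compactly by invoking Lemma \ref{compatible1} and only highlighting where invariance enters.
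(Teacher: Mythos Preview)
Your proof is correct and follows essentially the same approach as the paper: start from a $G$-invariant Riemannian metric to obtain a $G$-invariant $\tilde{\mathcal{G}}$, note that the resulting $\mathcal{A}=\tilde{\mathcal{G}}^{-1}\mathcal{J}$ is $G$-invariant, and hence so is $\mathcal{J}'=(\sqrt{\mathcal{A}\mathcal{A}^{*}})^{-1}\mathcal{A}$. Your extra remark about why the square root inherits $G$-equivariance is a nice clarification the paper leaves implicit.
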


\begin{proof}
Choose a $G$-invariant Riemannian metric $g$ on $M$ and put 
\begin{equation*}
\mathcal{G} = 
\left(
\begin{array}{ccc}
 0 & g^{-1} \\
 g & 0 \\
\end{array}
\right). 
\end{equation*}
Then $\mathcal{G}$ is a $G$-invariant positive definite metric on 
$TM \oplus T^{*}M$. 
Let $\mathcal{A}$ be an endomorphism on $TM \oplus T^{*}M$ defined by 
$\mathcal{A} = \mathcal{G}^{-1}\mathcal{J}$. 
Since $\mathcal{G}$ and $\mathcal{J}$ are $G$-invariant, 
$\mathcal{A}$ is also $G$-invariant. Now if we define 
\begin{equation*}
\mathcal{J}^{\prime} = (\sqrt{\mathcal{A}\mathcal{A}^{*}})^{-1}\mathcal{A}, 
\end{equation*}
then $\mathcal{J}^{\prime}$ is a generalized almost complex structure 
on $M$ which is compatible with $\mathcal{J}$. Furthermore since 
$\mathcal{A}$ is $G$-invariant, $\mathcal{J}^{\prime}$ is also $G$-invariant. 
This completes the proof. 
\end{proof}

\begin{lem}\label{Fix}
Let an $m$-dimensional torus $T^{m}$ act on an $H$-twisted generalized complex 
manifold $(M, \mathcal{J})$ in a Hamiltonian way with a generalized 
moment map $\mu$ and a moment one form $\alpha$. Then for an arbitrary 
subtorus $G \subset T^{m}$ the fixed point set of $G$-action 
\[
{\rm Fix}(G) = \{ p \in M\ |\ \theta \cdot p = p\ (\forall \theta \in G) \}
\]
is a generalized complex submanifold of $(M, \mathcal{J})$. 
\end{lem}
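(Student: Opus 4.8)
The plan is to reduce the statement to Proposition~\ref{prop21} by producing a $G$-invariant compatible generalized almost complex structure and then checking that $\mathrm{Fix}(G)$ is simultaneously an almost complex submanifold for the two ordinary almost complex structures $J_{\pm}$ associated to that data. First I would invoke Lemma~\ref{compatible2} to choose a $T^{m}$-invariant (hence $G$-invariant, since $G \subset T^{m}$) generalized almost complex structure $\mathcal{J}'$ compatible with $\mathcal{J}$, and let $(g, b, J_{\pm})$ be the corresponding quadruple as in~(\ref{decomp}). Because $\mathcal{J}'$ and $\mathcal{J}$ are $G$-invariant and the quadruple is canonically determined by the pair $(\mathcal{J}, \mathcal{J}')$ via Gualtieri's construction, the metric $g$, the $2$-form $b$, and the almost complex structures $J_{\pm}$ are all $G$-invariant as well. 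In particular $g$ is a $G$-invariant Riemannian metric, so $S := \mathrm{Fix}(G)$ is a smooth totally geodesic submanifold of $M$ (the standard fact that the fixed point set of a group of isometries is a submanifold), and at each $p \in S$ the tangent space $T_{p}S$ is exactly the subspace of $T_{p}M$ fixed by the isotropy representation of $G$.

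Next I would verify that $S$ is an almost complex submanifold with respect to both $J_{+}$ and $J_{-}$, i.e. that $J_{\pm}(T_{p}S) = T_{p}S$ for every $p \in S$. This is where $G$-invariance does the work: since $G$ fixes $p$ and preserves $J_{\pm}$, the endomorphism $(J_{\pm})_{p}$ of $T_{p}M$ commutes with the isotropy action of $G$ on $T_{p}M$, so it maps the $G$-fixed subspace $T_{p}S$ into itself; as $(J_{\pm})_{p}$ is invertible this gives $J_{\pm}(T_{p}S) = T_{p}S$. Hence $S$ is an almost complex submanifold for both $J_{+}$ and $J_{-}$. Applying Proposition~\ref{prop21} with the compatible pair $(\mathcal{J}, \mathcal{J}')$ and the quadruple $(g, b, J_{\pm})$ then yields immediately that $S = \mathrm{Fix}(G)$ is a generalized complex submanifold of $(M, \mathcal{J})$, which is the claim; moreover the induced structure on $S$ is $i^{*}H$-twisted generalized complex, as noted after the definition of generalized complex submanifold.

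The main obstacle I anticipate is purely a matter of bookkeeping rather than depth: one must be careful that the quadruple $(g, b, J_{\pm})$ produced by Gualtieri's recipe really inherits $G$-invariance from $(\mathcal{J}, \mathcal{J}')$. The cleanest way to see this is to observe that the recipe is natural with respect to the action of any symmetry of $TM \oplus T^{*}M$ that preserves the natural pairing and the Courant bracket; a diffeomorphism $\phi \in G$ acts on $TM \oplus T^{*}M$ by $\phi_{*} \oplus (\phi^{-1})^{*}$, preserves the pairing, and by hypothesis preserves both $\mathcal{J}$ and $\mathcal{J}'$, hence preserves the positive definite metric $\mathcal{G} = -\mathcal{J}\mathcal{J}'$ and therefore each of its constituents $g$, $b$, $J_{+}$, $J_{-}$, which are extracted from $\mathcal{G}$, $\mathcal{J}$, $\mathcal{J}'$ by universal algebraic formulas. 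Once this is in place, the only other point to state explicitly is the elementary fact that the common fixed locus of a torus acting by isometries of $(M, g)$ is a closed embedded submanifold whose tangent space at each point is the fixed subspace of the isotropy representation; this is classical and I would simply cite it. No new analytic input beyond Lemma~\ref{compatible2} and Proposition~\ref{prop21} is required.
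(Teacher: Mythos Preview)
Your proposal is correct and follows essentially the same route as the paper: choose a $G$-invariant compatible $\mathcal{J}'$ via Lemma~\ref{compatible2}, deduce $G$-invariance of the associated quadruple $(g,b,J_{\pm})$, identify $T_{p}\mathrm{Fix}(G)$ with the fixed subspace of the isotropy representation, observe that $J_{\pm}$ commute with this representation and hence preserve that subspace, and conclude with Proposition~\ref{prop21}. The only cosmetic difference is that the paper spells out the exponential-map argument for $T_{p}\mathrm{Fix}(G)=\bigcap_{\theta\in G}\ker(1-(\theta_{*})_{p})$ where you cite it as classical.
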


\begin{proof}
Choose a $G$-invariant generalized almost complex structure 
$\mathcal{J}^{\prime}$ which is compatible with $\mathcal{J}$. 
Then there exists a Riemannian metric $g$, a $2$-form $b$, and 
two orthogonal almost complex structures $J_{\pm}$ which satisfies 
the equation (\ref{decomp}). 
Since $\mathcal{J}$ and $\mathcal{J}^{\prime}$ are $G$-invariant, 
$g$ and $J_{\pm}$ are also $G$-invariant. 
For each $p \in {\rm Fix}(G)$ and $\theta \in G$, 
the differential of the action of $\theta$ at $p$ 
\begin{equation*}
(\theta_{*})_{p} : T_{p}M \to T_{p}M 
\end{equation*}
preserves the almost complex structures $J_{\pm}$. In addition, since 
$G$-action preserves the metric $g$, the exponential mapping 
$\exp_{p} : T_{p}M \to M$ with respect to the metric $g$ 
is equivariant, that is, 
\begin{equation*}
\exp_{p}((\theta_{*})_{p}X) = \theta \cdot \exp_{p}X 
\end{equation*}
for any $\theta \in G$ and $X \in T_{p}M$. This concludes that the 
fixed point of the action of $\theta$ near $p$ corresponds to 
the fixed point of $(\theta_{*})_{p}$ on $T_{p}M$ by the exponential 
mapping, that is, 
\begin{equation*}
T_{p}{\rm Fix}(G) = \bigcap_{\theta \in G}\ker(1 - (\theta_{*})_{p}). 
\end{equation*}
Since $J_{\pm}$ commutes with the endomorphism $(\theta_{*})_{p}$, 
the eigenspace with eigenvalue $1$ of $(\theta_{*})_{p}$ is 
invariant under $J_{\pm}$, and hence an almost complex 
subspace. In particular we see that 
${\rm Fix}(G)$ is a generalized complex submanifold 
of $(M, \mathcal{J})$ by applying Proposition \ref{prop21}.
\end{proof}
Now we consider a Hamiltonian $T^{m}$-action on a compact $H$-twisted generalized complex manifold $(M, \mathcal{J})$ with a generalized moment map $\mu: M \to \mathfrak{g}$ and a moment one form $\alpha \in \Omega^{1}(M; \mathfrak{g}^{*})$, 
and examine the functions $\mu^{\xi} : M \to \mathbb{R}$ for all $\xi \in \mathfrak{g}$. 
The following proposition shows that these are Bott-Morse functions 
with even indices and coindices. 
This is crucial to prove the connectedness of fibers of the generalized moment map. 
In our proof, the maximum principle for pseudoholomorphic functions on 
almost complex manifolds plays a central role. 
The maximum principle for pseudoholomorphic functions on almost complex manifolds is provided by the work of Boothby-Kobayashi-Wang in \cite{BKW}. 
\begin{prop}\label{BM}
Let an $m$-dimensional torus $T^{m}$ act on a compact 
$H$-twisted generalized complex manifold $(M, \mathcal{J})$ 
in a Hamiltonian way with a generalized 
moment map $\mu : M \to \mathfrak{t}^{*}$ and a moment one form 
$\alpha \in \Omega^{1}(M; \mathfrak{t}^{*})$. Then 
$\mu^{\xi}$ is a Bott-Morse function with even index and coindex 
for all $\xi \in \mathfrak{t}$. 
\end{prop}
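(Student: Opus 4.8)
The plan is to fix $\xi \in \mathfrak{t}$ and study the function $f = \mu^{\xi}$ near an arbitrary point $p$ of its critical set. First I would choose, via Lemma \ref{compatible2}, a $T^m$-invariant generalized almost complex structure $\mathcal{J}^{\prime}$ compatible with $\mathcal{J}$, together with the associated quadruple $(g, b, J_{\pm})$ satisfying $(\ref{decomp})$; all four pieces are $T^m$-invariant. The defining property $\xi_M - \sqrt{-1}(d\mu^{\xi} + \sqrt{-1}\alpha^{\xi}) \in L$, combined with the explicit description of $L$ from the proof of Proposition \ref{prop21} (namely $L = \{X + (b+g)(X) \mid X \in T^+_{1,0}M\} \oplus \{Y + (b-g)(Y) \mid Y \in T^-_{1,0}M\}$), lets me decompose $\xi_M$ into its $J_+$- and $J_-$-parts and read off $df = d\mu^{\xi}$ and $\alpha^\xi$ in terms of $g$, $b$ and those parts. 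In particular the critical points of $f$ are exactly the zeros of $\xi_M$, i.e. $\mathrm{Fix}(G_\xi)$ where $G_\xi = \overline{\exp(\mathbb{R}\xi)}$, and by Lemma \ref{Fix} this is a (generalized complex) submanifold whose tangent space at $p$ is the simultaneous $+1$-eigenspace of the isotropy representation, hence $J_\pm$-invariant.

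Next I would set up the Bott-Morse condition: I must show the Hessian of $f$ at $p$ is nondegenerate on a complement of $T_p\mathrm{Fix}(G_\xi)$ in $T_pM$, with even index and coindex. Work in geodesic normal coordinates for $g$ centered at $p$; the isotropy representation of $G_\xi$ on $T_pM$ is orthogonal and commutes with $J_\pm$, so $T_pM$ splits $G_\xi$-equivariantly and $J_\pm$-invariantly into the fixed part and a sum of nontrivial weight spaces, each of which carries a complex structure from, say, $J_+$. On each nontrivial weight space the function $f$ looks, to second order, like the moment-map component of a rotation action, whose Hessian is a nonzero multiple of a positive-definite quadratic form on a complex subspace — hence nondegenerate with even index. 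The role of the maximum principle of Boothby-Kobayashi-Wang \cite{BKW}: on the local complex base (the leaf space of the foliation $\pi(L)\cap\overline{\pi(L)}$, transverse to which the structure is symplectic of type $k$) the function $\mu^\xi$ restricts to something pseudoholomorphic-constrained, and the maximum principle forces it to be locally constant along the complex directions, so the only directions in which the Hessian can be nondegenerate are the "symplectic" ones, where the classical local normal form for Hamiltonian torus actions applies and gives even index and coindex.

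Concretely the key steps, in order, are: (1) fix $\xi$, pass to $G_\xi$, invoke Lemmas \ref{compatible2} and \ref{Fix} to identify $\mathrm{Crit}(\mu^\xi) = \mathrm{Fix}(G_\xi)$ as a $J_\pm$-invariant submanifold; (2) use the description of $L$ to express $d\mu^\xi$ and the Hessian of $\mu^\xi$ at $p$ through $g$, $b$ and $\xi_M$; (3) decompose $T_pM$ under the isotropy torus into weight spaces and show the Hessian vanishes on the fixed part and is nondegenerate on the rest; (4) apply the maximum principle \cite{BKW} to control the behavior along the type-$k$ complex directions and reduce the nondegeneracy to the symplectic transverse directions; (5) on those directions quote the standard fact that a moment map for a circle (hence torus) action on a symplectic vector space is Bott-Morse with even index and coindex, and conclude. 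The main obstacle I anticipate is step (4): reconciling the local fibration picture — $M$ locally fibered over a complex base with symplectic fibers, with the type possibly jumping — with the global Hessian computation, and making the application of the Boothby-Kobayashi-Wang maximum principle rigorous when the type is not locally constant. Handling the type-jumping locus carefully, so that the "complex" and "symplectic" directions are well defined near $p$ and vary controllably, is where the real work lies.
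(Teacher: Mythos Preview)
Your proposal has a genuine gap at the very first step, and it propagates through the rest of the argument. You assert that ``the critical points of $f$ are exactly the zeros of $\xi_M$'' follows from the algebraic description of $L$. Only one inclusion does: if $(d\mu^\xi)_p = 0$ then $\xi_M + \alpha^\xi \in L$ is real, hence lies in $L\cap\bar L = 0$, so $(\xi_M)_p = 0$. The reverse inclusion $\mathrm{Fix}(T^\xi)\subset\mathrm{Crit}(\mu^\xi)$ is \emph{not} algebraic: when $(\xi_M)_p = 0$ you only get $\alpha^\xi - \sqrt{-1}\,d\mu^\xi \in L\cap (T^*M\otimes\mathbb C)$, which in general is a nonzero subspace (for a complex structure it is all of $T^{0,1}M$). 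Remark~\ref{remlast} and the remark after the proposition make the point explicit: on a noncompact complex manifold with trivial torus action, the imaginary part of any holomorphic function is a generalized moment map, and $\mathrm{Fix}=M$ while $\mathrm{Crit}$ can be empty. So compactness is essential here, and this is precisely where the Boothby--Kobayashi--Wang maximum principle enters in the paper's proof: on $\mathrm{Fix}(T^\xi)$ one shows $\alpha^\xi + \sqrt{-1}\,d\mu^\xi$ is of type $(1,0)$ for both $J_\pm$, hence $\mu^\xi$ is locally the imaginary part of a pseudoholomorphic function on the compact almost complex manifold $(\mathrm{Fix}(T^\xi),J_\pm)$, and the maximum principle forces it to be constant on components.

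You invoke the maximum principle in step~(4) for a different purpose --- to control the Hessian along ``type-$k$ complex directions'' of a local symplectic-over-complex fibration --- but that picture is not available at type-jumping points (as you yourself flag), and no local normal form for Hamiltonian torus actions on generalized complex manifolds has been established for you to quote in step~(5). The paper sidesteps all of this: once $\mathrm{Crit}(\mu^\xi)=\mathrm{Fix}(T^\xi)$ is known, it derives the global identity $\xi_M = \tfrac12\bigl(\omega_+^{-1}(d\mu^\xi)-\omega_-^{-1}(d\mu^\xi)\bigr)$ and computes the Hessian directly via the Levi--Civita connection, obtaining $(L_\xi)_p = -\tfrac12(J_+-J_-)(\nabla^2\mu^\xi)_p$. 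This immediately gives $\ker(\nabla^2\mu^\xi)_p = T_p\mathrm{Fix}(T^\xi)$, and a short computation shows $(\nabla^2\mu^\xi)_p$ commutes with $J_+-J_-$; since $(L_\xi)_p$ is invertible off the fixed subspace, $J_+-J_-$ is invertible on each nonzero eigenspace of the Hessian, forcing those eigenspaces to be even-dimensional. No fibration, no normal form, no type-constancy assumption is needed. Your outline would be salvageable if you relocated the maximum principle to step~(1) and replaced steps~(3)--(5) by this direct Hessian computation.
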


\begin{proof}
For each $\xi \in \mathfrak{t}$, we denote $T^{\xi}$ the subtorus of 
$T^{m}$ generated by $\xi$. First we shall prove 
that the critical set 
\begin{equation*}
{\rm Crit}(\mu^{\xi}) = \{ p \in M\ |\ (d\mu^{\xi})_{p} = 0 \} 
\end{equation*}
coincides with the fixed point set of $T^{\xi}$-action ${\rm Fix}(T^{\xi})$. 
Choose a $T^{m}$-invariant generalized almost 
complex structure $\mathcal{J}^{\prime}$ which is 
compatible with $\mathcal{J}$. Then $\mathcal{J}$ can be written 
in the form of the equation (\ref{decomp}) for the corresponding 
quadruple $(g, b, J_{\pm})$. Note that the metric $g$ and 
orthogonal almost complex structures $J_{\pm}$ are all $T^{m}$-invariant. 

Since $\xi_{M} - \sqrt{-1}(d\mu^{\xi} + \sqrt{-1}\alpha^{\xi}) \in L$ 
by the definition of Hamiltonian actions,  $(d\mu^{\xi})_{p} = 0$ 
implies $p \in {\rm Fix}(T^{\xi})$. 
%\begin{eqnarray*}
%(d\mu^{\xi})_{p} = 0 
%&\to& 
%(\xi_{M})_{p} + (\alpha^{\xi})_{p} \in L_{p} \\
%&\to&
%(\xi_{M})_{p} = 0 \\
%&\to&
%\exp t\xi \cdot p = p\quad (\forall t \in \mathbb{R}) \\
%&\to&
%\theta \cdot p = p\quad (\forall \theta \in T^{\xi}) \\
%&\to&
%p \in {\rm Fix}(T^{\xi}). 
%\end{eqnarray*}
In particular we obtain ${\rm Crit}(\mu^{\xi}) \subset {\rm Fix}(T^{\xi})$. 
On the other hand, since 
${\rm Fix}(T^{\xi}) = \{ p \in M\ |\ (\xi_{M})_{p} = 0\}$, 
we see that $\alpha^{\xi} + \sqrt{-1}d\mu^{\xi} \in \bar L$ 
on ${\rm Fix}(T^{\xi})$. Hence there exists a complex vector field 
$X$ on $M$ 
which satisfies that $\alpha^{\xi} + \sqrt{-1}d\mu^{\xi} = g(X)$ 
and $X \in T_{0, 1}^{+}M \cap T_{0, 1}^{-}M$
on ${\rm Fix}(T^{\xi})$ because the $\sqrt{-1}$-eigenspace $L$ 
can be written by
\begin{equation*}
L = 
\{ X + (b + g)(X)\ |\ X \in T_{1, 0}^{+}M \} 
\oplus \{ Y + (b - g)(Y)\ |\ Y \in T_{1, 0}^{-}M \}. 
\end{equation*}
Since the almost complex structures $J_{\pm}$ are orthogonal 
with respect to the metric $g$, we see that 
$\alpha^{\xi} + \sqrt{-1}d\mu^{\xi}$ is a holomorphic $1$-form 
on ${\rm Fix}(T^{\xi})$. 
Moreover, since $\alpha^{\xi}$ is a closed $1$-form on ${\rm Fix}(T^{\xi})$, 
we can view the function $\mu^{\xi}$ locally as an imaginary part of a 
pseudoholomorphic function on an almost complex manifold 
$({\rm Fix}(T^{\xi}), J_{\pm})$. 
By applying the maximum 
principle and compactness of ${\rm Fix}(T^{m})$, 
we see that $\mu^{\xi}$ is constant on each connected component of 
${\rm Fix}(T^{\xi})$. Moreover the gradient of $\mu^{\xi}$ with 
respect to the metric $g$ is 
tangent to ${\rm Fix}(T^{\xi})$ 
because $g$ and $\mu^{\xi}$ are $T^{\xi}$-invariant. 
This shows that ${\rm Fix}(T^{\xi}) \subset {\rm Crit}(\mu^{\xi})$, 
and hence we obtain ${\rm Crit}(\mu^{\xi}) = {\rm Fix}(T^{\xi})$. 
In particular, ${\rm Crit}(\mu^{\xi})$ is 
a generalized complex submanifold of $M$. 

To prove that the function $\mu^{\xi}$ is a Bott-Morse function, we shall calculate the Hessian $\nabla^{2}\mu^{\xi}$ of $\mu^{\xi}$ on ${\rm Crit}(\mu^{\xi})$. 
Since $\xi_{M} - \sqrt{-1}(d\mu^{\xi} + \sqrt{-1}\alpha^{\xi}) \in L$ for each 
$\xi \in \mathfrak{g}$, we have 
\begin{equation}\label{eq1}
\mathcal{J}(\xi_{M} - \sqrt{-1}(d\mu^{\xi} + \sqrt{-1}\alpha^{\xi})) 
= \sqrt{-1}(\xi_{M} - \sqrt{-1}(d\mu^{\xi} + \sqrt{-1}\alpha^{\xi})). 
\end{equation}
In addition, by using the equation (\ref{decomp}) for the $H$-twisted generalized complex structure $\mathcal{J}$, 
for the natural projection 
$\pi: (TM \oplus T^{*}M) \otimes \mathbb{C} \to TM \otimes \mathbb{C}$ 
we obtain the following equation; 
\begin{multline}\label{eq2}
\pi(\mathcal{J}(\xi_{M} - \sqrt{-1}(d\mu + \sqrt{-1}\alpha^{\xi}))) \\
= \frac{1}{2}\left( (J_{+} + J_{-})(\xi_{M}) - (\omega_{+}^{-1} - \omega_{+}^{-1})(-b(\xi_{M}) -\sqrt{-1}(d\mu^{\xi} + \sqrt{-1}\alpha^{\xi}))  \right). 
\end{multline}
By combining the equations (\ref{eq1}) and (\ref{eq2}), we see that 
the induced vector field 
$\xi_{M}$ can be written as 
\begin{equation}\label{eq3}
\xi_{M} = \frac{1}{2}
\Big( \omega_{+}^{-1}(d\mu^{\xi}) - \omega_{-}^{-1}(d\mu^{\xi}) \Big). 
\end{equation}
Let $\nabla$ be the Riemannian connection 
with respect to the metric $g$. 
Then by an easy calculation we have the following equality for 
$\xi_{M}^{\pm} := \omega_{\pm}^{-1}(d\mu^{\xi}) = -J_{\pm}g^{-1}(d\mu^{\xi})$; 
\begin{equation}\label{met}
g(\nabla^{2}\mu^{\xi}(Y), Z) = 
g((\nabla_{Y}J_{\pm})\xi_{M}^{\pm}, Z) + g(J_{\pm}(\nabla_{Y}\xi_{M}^{\pm}), Z). \end{equation}
%\begin{eqnarray*}
%g(\nabla^{2}\mu^{\xi}(Y), Z) &=& 
%Y(Z\mu^{\xi}) - \nabla_{Y}Z(\mu^{\xi}) \\
%&=& Y(g(J_{\pm}\xi_{M}^{\pm}, Z )) - g(J_{\pm}\xi_{M}^{\pm}, \nabla_{Y}Z) \\
%&=& g(\nabla_{Y}J_{\pm}\xi_{M}^{\pm}, Z) \\
%&=& g((\nabla_{Y}J_{\pm})\xi_{M}^{\pm}, Z) + g(J_{\pm}(\nabla_{Y}\xi_{M}^{\pm})%, Z), 
%\end{eqnarray*}
Since the vector field $\xi_{M}^{\pm}$ vanishes on ${\rm Crit}(\mu^{\xi})$, 
the equation (\ref{met}) shows that 
\begin{equation}\label{nabla}
(\nabla^{2}\mu^{\xi})_{p}(Y_{p}) = J_{\pm}(\nabla_{Y_{p}}\xi_{M}^{\pm}) 
\end{equation}
for each $p \in {\rm Crit}(\mu^{\xi})$ and $Y_{p} \in T_{p}M$. 
Let $(L_{\xi})_{p}$ be an endomorphism on $T_{p}M$ 
defined by $(L_{\xi})_{p}(Y) := [\xi_{M}, Y]_{p} = -\nabla_{Y_{p}}\xi_{M}$. 
Then by the equation (\ref{eq3}) and (\ref{nabla}), we see that $(L_{\xi})_{p}$ can be written as 
\begin{equation}\label{LieHessian}
(L_{\xi})_{p} = -\frac{1}{2}(J_{+} - J_{-})(\nabla^{2}\mu^{\xi})_{p}. 
\end{equation}

Now we shall prove 
$T_{p}{\rm Crit}(\mu^{\xi}) = \ker (\nabla^{2}\mu^{\xi})_{p}$. 
Since each connected component ${\rm Crit}(\mu^{\xi})$ is 
a submanifold of $M$, it is clear that 
$T_{p}{\rm Crit}(\mu^{\xi}) \subset \ker (\nabla^{2}\mu^{\xi})_{p}$. 
Therefore we may only show that 
$\ker (\nabla^{2}\mu^{\xi})_{p} \subset T_{p}{\rm Crit}(\mu^{\xi})$. 
At first we have 
$\ker (\nabla^{2}\mu^{\xi})_{p} \subset \ker (L_{\xi})_{p}$ 
by the equation (\ref{LieHessian}). 
If we identify $(L_{\xi})_{p}$ with a vector field on $T_{p}M$, 
the one parameter family of diffeomorphisms 
$\{ (\exp t\xi _{*})_{p} \}_{t \in \mathbb{R}}$ on $T_{p}M$ coincides with 
$\{ \exp t(L_{\xi})_{p} \}_{t \in \mathbb{R}}$. Hence $\ker (L_{\xi})_{p}$ 
coincides with the fixed point set of 
$\{ (\exp t\xi _{*})_{p} \}_{t \in \mathbb{R}}$. 
Therefore we have 
\begin{equation*}
\ker (\nabla^{2}\mu^{\xi})_{p} \subset T_{p}{\rm Crit}(\mu^{\xi}), 
\end{equation*}
and this shows that 
$T_{p}{\rm Crit}(\mu^{\xi}) = \ker (\nabla^{2}\mu^{\xi})_{p}$. 
In particular, we see that $\mu^{\xi}$ is a Bott-Morse function. 

Finally, we shall show that the function $\mu^{\xi}$ has even 
index and coindex. By equation (\ref{nabla}), 
we see that 
\begin{equation*}
g((\nabla^{2}\mu^{\xi})_{p}(J_{\pm}Y), Z) 
= g((\nabla^{2}\mu^{\xi})_{p}Z, J_{\pm}Y) 
= g(J_{\pm}(\nabla_{Z}\xi_{M}^{\pm}), J_{\pm}Y) 
= g(\nabla_{Z}\xi_{M}^{\pm}, Y) 
\end{equation*}
for each $p \in {\rm Crit}(\mu^{\xi})$ and $Y, Z \in T_{p}M$. 
Since $\xi_{M} = \frac{1}{2}(\xi_{M}^{+} - \xi_{M}^{-})$ 
and $\xi_{M}$ is a Killing vector field, we obtain 
\begin{eqnarray*}
g((\nabla^{2}\mu^{\xi})_{p}(J_{+} - J_{-})(Y), Z) 
&=& g(\nabla_{Z}(\xi_{M}^{+} - \xi_{M}^{-}), Y) 
= 2g(\nabla_{Z}\xi_{M}, Y) \\
&=& -2g(\nabla_{Y}\xi_{M}, Z) 
= -g(\nabla_{Y}(\xi_{M}^{+} - \xi_{M}^{-}), Z) \\
&=& g((J_{+} - J_{-})(\nabla^{2}\mu^{\xi})_{p}(Y), Z). 
\end{eqnarray*}
Hence we see that $(\nabla^{2}\mu^{\xi})_{p}$ 
commutes with $J_{+} - J_{-}$ for all $p \in {\rm Crit}(\mu^{\xi})$. 
Now we define a differential $2$-form by $g(J_{+}-J_{-})$. 
Then since $g$ is positive definite and $J_{+}-J_{-}$ commutes with 
$(\nabla^{2}\mu^{\xi})_{p}$, $J_{+}-J_{-}$ preserves each eigenspace of $(\nabla^{2}\mu^{\xi})_{p}$ and hence $g(J_{+}-J_{-})$ is nondegenerate on each non-zero eigenspace of $(\nabla^{2}\mu^{\xi})_{p}$. 
Thus each non-zero eigenspace of $(\nabla^{2}\mu^{\xi})_{p}$ is 
even dimensional, in particular the index and coindex of 
the critical manifold are even. 
\end{proof}
\begin{rem}
The compactness assumption here is essential. 
If $M$ is noncompact, then a generalized moment map can not be 
seen a Bott-Morse function in general. Indeed, if we consider 
a trivial torus action on a complex manifold $(M, J)$, 
then the imaginary part of an arbitrary holomorphic 
function is a generalized moment map for this action. 
(See also Remark \ref{remlast}.)
\end{rem}
\subsection{A proof of Theorem A}
In this section we shall prove Theorem A. 
This proof involves induction over $m = \dim T^{m}$. 
Consider the statements: 
\begin{eqnarray*}
A_{m} &:& 
\text{``the level sets of $\mu$ are connected, for any $T^{m}$-action", }
{\rm and}\\
B_{m} &:& 
\text{``the image of $\mu$ is convex, for any $T^{m}$-action".}
\end{eqnarray*}
At first we see that $A_{1}$ holds by Proposition \ref{BM} 
and the fact that level sets of a Bott-Morse function on a connected 
compact manifold are connected 
if the critical manifolds have index and coindex 
$\not= 1$ (see \cite{MS} for example). The claim $B_{1}$ holds clearly 
because in $\mathbb{R}$ connectedness is equivalent to convexity. 

Now we prove $A_{m-1} \Rightarrow B_{m}$. Choose a matrix 
$A \in \mathbb{Z}^{m} \otimes \mathbb{Z}^{m-1}$ of maximal rank. 
If we identify $A$ with a linear mapping 
$A: \mathbb{R}^{m-1} \to \mathbb{R}^{m}$ and $T^{m}$ with 
$\mathbb{R}^{m}/\mathbb{Z}^{m}$, then 
$A$ induces an action of $T^{m-1}$ on $M$ by 
\begin{equation*}
\theta: p \mapsto (A \theta) \cdot p, 
\end{equation*}
for $\theta \in T^{m-1}$ and $p \in M$. 
The $T^{m-1}$-action 
is a Hamiltonian action with a generalized moment map 
$\mu_{A}(p) := A^{t}\mu(p)$ and a moment one form 
$\alpha_{A}^{\xi} := \alpha^{A\xi}$, where $A^{t}$ denotes the 
transpose of $A$. 

Given any $a \in \mu_{A}(M)$ and $p_{0} \in \mu_{A}^{-1}(a)$, 
we have the fiber of $\mu_{A}$ by 
%\begin{equation*}
%p \in \mu_{A}^{-1}(a) \Leftrightarrow 
%A^{t}\mu(p) = a = A^{t}\mu(p_{0}) \Leftrightarrow 
%\mu(p) - \mu(p_{0}) \in \ker A^{t} 
%\end{equation*}
\begin{equation*}
\mu_{A}^{-1}(a) = \{ p \in M\ |\ \mu(p) - \mu(p_{0}) \in \ker A^{t} \}. 
\end{equation*}
By the assumption $A_{m-1}$, $\mu_{A}^{-1}(a)$ is connected. 
Therefore, for each $p_{0}, p_{1} \in \mu_{A}^{-1}(a)$, if we connect $p_{0}$ to $p_{1}$ by a continuous path $p_{t}$ 
in $\mu_{A}^{-1}(a)$ 
we obtain a path $\mu(p_{t}) - \mu(p_{0})$ in $\ker A^{t}$. 
Since $A^{t}$ is surjective, 
$\ker A^{t}$ is $1$-dimensional. Hence $\mu(p_{t})$ must go through 
any convex combination of $\mu(p_{0})$ and $\mu(p_{1})$, 
which shows that any point on the line segment from 
$\mu(p_{0})$ to $\mu(p_{1})$ must be in $\mu(M)$. 

Any $p_{0}, p_{1} \in M$ with $\mu(p_{0}) \not= \mu(p_{1})$ can be 
approximated arbitrarily closely by points $p_{0}^{\prime}$ and 
$p_{1}^{\prime}$ with $\mu(p_{1}^{\prime}) - \mu(p_{0}^{\prime}) 
\in \ker A^{t}$ for some matrix 
$A \in \mathbb{Z}^{m} \otimes \mathbb{Z}^{m-1}$ 
of maximal rank. By the argument above, we see that the line 
segment from $\mu(p_{0}^{\prime})$ to $\mu(p_{1}^{\prime})$ 
must be in $\mu(M)$. 
By taking limits $p_{0}^{\prime} \to p_{0}$, and 
$p_{1}^{\prime} \to p_{1}$ we can conclude that $\mu(M)$ is
convex. 

Next we prove $A_{m-1} \Rightarrow A_{m}$. By identifying 
$\mathfrak{t}$ with $\mathbb{R}^{m}$, we can express the generalized 
moment map by $\mu = (\mu_{1}, \cdots, \mu_{m})$. 
We call the generalized moment map $\mu$ to be effective if 
the $1$-forms $d\mu_{1}, \cdots, d\mu_{m}$ are linearly independent.
Note that $p \in M$ is a regular point 
of $\mu$ if and only if $(d\mu_{1})_{p}, \cdots, (d\mu_{m})_{p}$ 
are linearly independent. If the generalized moment map $\mu$ is not 
effective, the action reduces to a Hamiltonian action of an 
$(m-1)$-dimensional subtorus. Indeed, If $\mu$ is not effective, 
there exists $c = (c_{1}, \cdots, c_{m}) \in \mathbb{R}^{m} \setminus \{ 0 \}$ 
such that $\sum_{i=1}^{m}c_{i}d\mu_{i} = 0$. Hence if we denote the 
canonical basis of $\mathfrak{t} \cong \mathbb{R}^{m}$ 
by $\xi_{1}, \cdots, \xi_{n}$, then we have 
\begin{equation*}
\sum_{i=1}^{m}c_{i} \Big( (\xi_{i})_{M} + \alpha_{i} \Big)
= \sum_{i=1}^{m}c_{i} 
\Big((\xi_{i})_{M} -\sqrt{-1}( d\mu_{i} + \sqrt{-1}\alpha_{i}) \Big)
\in L, 
\end{equation*}
where $\alpha = (\alpha_{1}, \cdots, \alpha_{m})$. 
Since $\sum_{i=1}^{m}c_{i} \left( (\xi_{i})_{M} + \alpha_{i} \right)$ 
is real and $L \cap \bar L = \{ 0 \}$, we obtain 
$\sum_{i=1}^{m}c_{i}(\xi_{i})_{M} = \sum_{i=1}^{m}c_{i}\alpha_{i} = 0$. 
Now consider a vector $\xi = \sum_{i=1}^{m}c_{i}\xi_{i} \in \mathfrak{t}$. 
By the same argument in the earlier part of the proof of 
Proposition \ref{BM}, we see that 
${\rm Crit}(\mu^{\xi}) = {\rm Fix}(T^{\xi})$ and hence 
the function $\mu^{\xi}$ is constant along $M$ because $\xi_{M} = 0$. 
For the simplicity, we may assume $\xi_{1}, \cdots, \xi_{m-1}, \xi$ are 
linearly independent. Then the $T^{m-1}$-action generated by 
$\xi_{1}, \cdots, \xi_{m-1}$ is a Hamiltonian action with 
a generalized moment map $\mu^{\prime} = (\mu_{1}, \cdots, \mu_{m-1})$ and 
a moment one form $\alpha^{\prime} = (\alpha_{1}, \cdots, \alpha_{m-1})$. 
Hence in this case the connectedness of fibers of $\mu$ follows from that 
of the reduced generalized moment map $\mu^{\prime}$. 
Hence we may assume that $\mu$ is effective. 
Then for each $\xi \in \mathfrak{t} \setminus \{ 0 \}$, $\mu^{\xi}$ is not 
a constant function, and the critical manifold ${\rm Crit}(\mu^{\xi})$ is 
an even dimensional proper submanifold. Now consider the union of 
critical manifolds 
\begin{equation*}
C = \cup_{\eta \in \mathfrak{t} \setminus \{ 0 \}}{\rm Crit}(\mu^{\eta}). 
\end{equation*}
We claim that the union $C$ is indeed a countable union of even dimensional 
proper submanifolds. To see this, recall that the critical points of 
$\mu^{\eta}$ are the fixed points of the action of the subtorus 
$T^{\eta} \subset T^{m}$ and form an even dimensional proper submanifold. 
Since the fixed point set decreases as the torus increases it suffices to 
consider $1$-dimensional subtorus or, equivalently, integer vectors $\eta$. 
This shows the assertion about $C$. 
In particular, $M \setminus C$ is a dense subset of $M$. 
In addition, since the condition $p \in M \setminus C$ 
is equivalent to the condition that 
$(d\mu_{1})_{1}, \cdots, (d\mu_{m})_{p}$ are linearly independent, 
we obtain $M \setminus C$ is open dense subset of $M$. 

\begin{lem}
The set of regular values of $\mu$ in $\mu(M)$ is a 
dense subset of $\mu(M)$. 
\end{lem}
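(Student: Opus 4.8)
The plan is to show that the set of regular values of $\mu$ is dense in $\mu(M)$ by exhibiting it as the complement, inside $\mu(M)$, of a set that is small in a suitable sense. Recall that the preceding discussion established that $M \setminus C$ is an open dense subset of $M$, where $C = \bigcup_{\eta \in \mathfrak{t}\setminus\{0\}}{\rm Crit}(\mu^{\eta})$ is a countable union of even-dimensional proper submanifolds of $M$, and that a point $p$ lies in $M \setminus C$ precisely when $(d\mu_1)_p, \dots, (d\mu_m)_p$ are linearly independent, i.e.\ $p$ is a regular point of $\mu$. Thus the critical point set of $\mu$ is exactly $C$, the union of critical manifolds of the functions $\mu^\eta$. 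So the set of critical values of $\mu$ is $\mu(C) = \bigcup_{\eta}\mu({\rm Crit}(\mu^\eta))$.

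First I would fix an arbitrary point $a \in \mu(M)$ and a small ball $B$ around $a$ in $\mathfrak{t}^{*}$, and argue that $B \cap \mu(M)$ contains a regular value. The key point is a dimension/measure count: each ${\rm Crit}(\mu^\eta)$ is a closed submanifold of $M$ of dimension strictly less than $\dim M$; since $\mu$ is smooth, $\mu({\rm Crit}(\mu^\eta))$ has measure zero in $\mathfrak{t}^{*} \cong \mathbb{R}^m$ by Sard's theorem applied to the restriction $\mu|_{{\rm Crit}(\mu^\eta)}$ (a smooth map from a manifold of dimension $< m$ into $\mathbb{R}^m$ automatically has image of measure zero). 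As noted in the text, it suffices to take $\eta$ ranging over the integer vectors (since the fixed-point set shrinks as the torus grows, every ${\rm Crit}(\mu^\eta)$ is contained in some ${\rm Crit}(\mu^{\eta'})$ with $\eta' \in \mathbb{Z}^m$), so $\mu(C)$ is a countable union of measure-zero sets, hence measure zero in $\mathbb{R}^m$.

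Next I would combine this with the convexity statement $B_m$ already proved: $\mu(M)$ is a convex subset of $\mathbb{R}^m$. If $\mu(M)$ has nonempty interior in $\mathbb{R}^m$, then for any $a \in \mu(M)$ and any ball $B \ni a$, the set $B \cap {\rm int}\,\mu(M)$ is a nonempty open set, hence has positive measure, so it cannot be contained in the measure-zero set $\mu(C)$; therefore it contains a regular value of $\mu$, proving density. If instead $\mu(M)$ lies in a proper affine subspace $V \subset \mathbb{R}^m$, then $\mu$ is not effective and, as the text explains, the action reduces to an effective Hamiltonian action of a lower-dimensional torus on $M$ with generalized moment map taking values in (an affine translate of) $V$; applying the previous paragraph inside $V$ gives the result, since regular values of the reduced moment map correspond to regular values of $\mu$ within $V$.

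The main obstacle is making the reduction to the effective case fully rigorous and checking that ``regular value'' is the correct notion relative to the affine hull: one must be careful that when $\mu(M)$ spans only $V$, the honest statement is that the set of points $a \in \mu(M)$ which are regular values of $\mu$ viewed as a submersion onto $V$ (equivalently, points where $\mu$ is a submersion in the directions of $V$) is dense in $\mu(M)$; here Sard's theorem must be applied to the co-restricted map $M \to V$. Once the effectiveness reduction is in place, everything else is a routine application of Sard's theorem together with the already-established convexity of $\mu(M)$ and the structural description of ${\rm Crit}(\mu^\eta)$ from Proposition \ref{BM}.
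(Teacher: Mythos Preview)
Your overall strategy is sound and leads to the result, but it differs from the paper's argument and contains one incorrect justification that you should fix.

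The flaw: you claim that $\mu|_{{\rm Crit}(\mu^{\eta})}$ is ``a smooth map from a manifold of dimension $< m$ into $\mathbb{R}^{m}$''. This is false in general: ${\rm Crit}(\mu^{\eta})$ is a proper submanifold of $M$, hence of dimension $< \dim M$, but that says nothing about its dimension relative to $m$. Fortunately the conclusion you want is immediate from the ordinary Sard theorem applied to $\mu : M \to \mathbb{R}^{m}$ itself: since $C$ is exactly the set of critical points of $\mu$, the set $\mu(C)$ of critical values has Lebesgue measure zero in $\mathbb{R}^{m}$. No decomposition of $C$ or restriction of $\mu$ is needed.

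Comparison with the paper: the paper does not invoke the convexity statement $B_{m}$ at all. Instead it argues pointwise: given $a = \mu(p)$, one picks regular points $p_{i} \to p$ (possible because $M \setminus C$ is dense in $M$); at each $p_{i}$ the map $\mu$ is a submersion, so by the implicit function theorem $\mu(M)$ contains an open ball around $\mu(p_{i})$; inside that ball Sard's theorem furnishes a regular value $a_{i}$, and $a_{i} \to a$. Your route replaces the implicit-function-theorem step by the fact (available at this point in the induction) that $\mu(M)$ is convex with nonempty interior, so that its interior is dense and cannot be exhausted by the measure-zero set of critical values. Both arguments are short; the paper's has the minor advantage of not depending on $B_{m}$, while yours avoids the sequence construction. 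Your discussion of the non-effective case is unnecessary here, since the paper has already reduced to the effective situation before stating the lemma.
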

\begin{proof}
For each $a = \mu(p) \in \mu(M)$, there exists a sequence 
$\{p_{i}\}_{i = 1}^{\infty} \subset M \setminus C$ 
which satisfies that $\lim_{i \to \infty}p_{i} = p$. Since $p_{i}$ is 
a regular point of $\mu$, $\mu(M)$ contains a neighborhood of 
$\mu(p_{i})$ by implicit function theorem. Moreover there exists 
a regular value $a_{i} \in \mathfrak{t}^{*}$ which is sufficiently 
close to $\mu(p_{i})$ and $\mu^{-1}(a_{i}) \not= \phi$ 
by Sard's theorem. Hence the sequence $\{ a_{i}\}_{i=1}^{\infty}$ 
approximates $a$. 
\end{proof}

By a similar argument, we see that the set of 
$a = (a_{1}, \cdots, a_{m}) \in \mathfrak{t}^{*}$ that 
$(a_{1}, \cdots, a_{m-1})$ is a regular value of 
$(\mu_{1}, \cdots, \mu_{m-1})$ in $\mu(M)$ is also 
a dense subset of $\mu(M)$. Hence, by continuity, to prove that 
$\mu^{-1}(a)$ is connected for every 
$a = (a_{1}, \cdots, a_{m}) \in \mathfrak{t}^{*}$, it suffics to 
prove that $\mu^{-1}(a)$ is connected whenever $(a_{1}, \cdots, a_{m-1})$ 
is a regular value for the reduced generalized moment map 
$(\mu_{1}, \cdots, \mu_{m-1})$. By the induction hypothesis, 
the submanifold 
\begin{equation*}
Q = \cap_{i=1}^{m-1}\mu_{i}^{-1}(a_{i}) 
\end{equation*}
is connected 
for a regular value $(a_{1}, \cdots, a_{m-1})$ of 
$(\mu_{1}, \cdots, \mu_{m-1})$. To complete the proof, we need the following 
lemma. 
\begin{lem}\label{BM2}
If $(a_{1}, \cdots, a_{m-1})$ is a regular value for 
$(\mu_{1}, \cdots, \mu_{m-1})$, the function 
$\mu_{m} : Q \to \mathbb{R}$ is a Bott-Morse function 
of even index and coindex. 
\end{lem}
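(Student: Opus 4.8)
The plan is to reduce the statement to Proposition~\ref{BM} applied to the residual circle action on a generalized complex orbifold obtained from $Q$ by reduction. The first step is to observe that, since $(a_{1},\dots,a_{m-1})$ is a regular value of $\mu'=(\mu_{1},\dots,\mu_{m-1})$, the subtorus $T^{m-1}\subseteq T^{m}$ generated by $\xi_{1},\dots,\xi_{m-1}$ acts locally freely on $Q=\bigcap_{i=1}^{m-1}\mu_{i}^{-1}(a_{i})$. Indeed, if some nonzero $\eta$ in the Lie algebra of $T^{m-1}$ satisfied $\eta_{M}(p)=0$ at a point $p\in Q$, then $p\in{\rm Fix}(T^{\eta})$, and by Proposition~\ref{BM} the form $d\mu^{\eta}$ would vanish identically on ${\rm Fix}(T^{\eta})$; writing $\eta=\sum_{i<m}c_{i}\xi_{i}$ this gives $\sum_{i<m}c_{i}(d\mu_{i})_{p}=0$, contradicting the surjectivity of $(d\mu')_{p}$. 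Since $Q$ is a closed subset of the compact manifold $M$ it is compact, so by the orbifold version of Theorem~\ref{reduction} the quotient $M':=Q/T^{m-1}$ is a compact $\widetilde{H}$-twisted generalized complex orbifold $(M',\widetilde{\mathcal J})$, with inherited $\sqrt{-1}$-eigenbundle $\widetilde{L}$.

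The second step, which I expect to be the main obstacle, is to exhibit the residual action of $T^{m}/T^{m-1}\cong S^{1}$ on $M'$ as a Hamiltonian action. The function $\mu_{m}|_{Q}$ is $T^{m-1}$-invariant, so it descends to a smooth function $\bar{\mu}_{m}\colon M'\to\mathbb{R}$; together with the one form descended from $\alpha_{m}$ (corrected by the connection form $\Gamma$ of Lemma~\ref{lem31} that enters the reduction) this pair should satisfy the two defining conditions of a Hamiltonian action for $(M',\widetilde{\mathcal J})$, namely $\eta_{M'}-\sqrt{-1}(d\bar{\mu}_{m}^{\eta}+\sqrt{-1}\bar{\alpha}_{m}^{\eta})\in\widetilde{L}$ and $i_{\eta_{M'}}\widetilde{H}=d\bar{\alpha}_{m}^{\eta}$ for the generator $\eta$. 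Verifying this requires unwinding the construction behind Theorem~\ref{reduction}; the indeterminacy of $\widetilde{\mathcal J}$ up to $B$-field shift is harmless, since an invariant closed $B$-field shift alters only the moment one form and not the generalized moment map.

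Granting this, Proposition~\ref{BM}, whose proof is entirely local and rests only on the maximum principle for pseudoholomorphic functions and therefore carries over verbatim to orbifolds, shows that $\bar{\mu}_{m}$ is a Bott-Morse function on the compact generalized complex orbifold $M'$ with even index and coindex. I would then transport this conclusion back to $Q$ along the orbit map $\pi\colon Q\to M'$ of the locally free $T^{m-1}$-action, which has surjective differential at every point and connected fibres. From $\mu_{m}|_{Q}=\pi^{*}\bar{\mu}_{m}$ one gets ${\rm Crit}(\mu_{m}|_{Q})=\pi^{-1}({\rm Crit}(\bar{\mu}_{m}))$, a disjoint union of connected submanifolds of $Q$, and at each critical point the Hessian of $\mu_{m}|_{Q}$ is the pullback by $d\pi$ of the Hessian of $\bar{\mu}_{m}$; since $d\pi$ restricts to an isomorphism on normal spaces, the normal Hessian of $\mu_{m}|_{Q}$ is nondegenerate with the same index and coindex as that of $\bar{\mu}_{m}$, in particular even. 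Hence $\mu_{m}\colon Q\to\mathbb{R}$ is a Bott-Morse function of even index and coindex.

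As an alternative I would keep in mind a direct argument on $Q$ modelled on the proof of Proposition~\ref{BM}: near a critical point $p$ of $\mu_{m}|_{Q}$ one has $(d\mu_{m})_{p}=\sum_{i<m}\lambda_{i}(d\mu_{i})_{p}$, and with $\eta:=\xi_{m}-\sum_{i<m}\lambda_{i}\xi_{i}$ this yields the identity $\mu_{m}|_{Q}=\mu^{\eta}|_{Q}+{\rm const}$ on $Q$, after which one may try to repeat the Hessian computation of Proposition~\ref{BM} relative to the submanifold $Q$. The delicate point of that route, namely controlling how $\eta$ (equivalently, the stabilizer subtorus at each critical point) varies over ${\rm Crit}(\mu_{m}|_{Q})$, is precisely what passing to the quotient $M'$ packages away, which is why I would favour the reduction argument.
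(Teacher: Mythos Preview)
Your primary approach via reduction is genuinely different from the paper's, which takes precisely the direct route you sketch as your ``alternative''. The paper fixes a critical point $p\in Q$, finds $\xi=(c_{1},\dots,c_{m-1},1)$ with $(d\mu^{\xi})_{p}=0$, and then proves that ${\rm Crit}(\mu^{\xi})$ meets $Q$ transversally at $p$ by showing that the vector fields $g^{-1}(d\mu_{i})$ and hence $\xi_{i}^{+}=-J_{+}g^{-1}(d\mu_{i})$ are tangent to ${\rm Crit}(\mu^{\xi})={\rm Fix}(T^{\xi})$ (using $T^{m}$-invariance of $g$ and $\mu_{i}$ together with the fact that ${\rm Crit}(\mu^{\xi})$ is $J_{+}$-almost complex); since $\omega_{+}$ restricts nondegenerately to ${\rm Crit}(\mu^{\xi})$, the $(d\mu_{i})_{p}$ stay independent there. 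The paper then observes that $(T_{p}{\rm Crit}(\mu^{\xi}))^{\perp}\subset T_{p}Q$, so the normal Hessian of $\mu_{m}|_{Q}=\mu^{\xi}|_{Q}+\text{const}$ at $p$ inherits the even index and coindex from Proposition~\ref{BM}. The ``delicate point'' you worried about---that $\xi$ varies with $p$---is handled purely pointwise: the transversality and Hessian computations are at a single $p$, and the nondegeneracy of the normal Hessian forces the critical locus to coincide locally with the transversal slice ${\rm Crit}(\mu^{\xi})\cap Q$.

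Your reduction argument is plausible but not complete as written: the step you correctly flag as the ``main obstacle''---that the residual $S^{1}$-action on $M'=Q/T^{m-1}$ is Hamiltonian for the reduced generalized complex structure, with generalized moment map $\bar\mu_{m}$---is a reduction-in-stages statement that is not contained in Theorem~\ref{reduction} and would need an independent proof (tracking how $\xi_{m,M}-\sqrt{-1}(d\mu_{m}+\sqrt{-1}\alpha_{m})$ descends through the $\Gamma$-shift and the quotient). Until that is checked you cannot invoke Proposition~\ref{BM} on $M'$. The paper's direct approach trades this for concrete linear algebra with the bihermitian data $(g,J_{\pm},\omega_{\pm})$ already set up in the proof of Proposition~\ref{BM}, and thereby stays entirely within results established in the paper.
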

\begin{proof}
By the hypothesis, $Q$ is a $2n-(m-1)$ dimensional connected submanifold 
of $M$. For each $p \in Q$, the subspace $W$ of the cotangent space 
$T_{p}^{*}M$ generated by $(d\mu_{1})_{p}, \cdots, (d\mu_{m-1})_{p}$ is 
$(m-1)$ dimensional because $p$ is regular. 
Therefore the tangent space $T_{p}Q$ of $Q$ coincides with the annihilator of $W$; 
\begin{equation*}
T_{p}Q = \{ X \in T_{p}M\ |\ \alpha(X) = 0\ (\forall \alpha \in W) \}. 
\end{equation*}
Hence we see that $p \in Q$ is a critical point of $\mu_{m}: Q \to \mathbb{R}$ 
if and only if there exists real numbers $c_{1}, \cdots, c_{m-1}$ 
such that 
\begin{equation*}
\sum_{i=1}^{m-1}c_{i}(d\mu_{i})_{p} + (d\mu_{m})_{p} = 0. 
\end{equation*}
This means that $p$ is a critical point of the function 
$\mu^{\xi} : M \to \mathbb{R}$, where 
$\xi = (c_{1}, \cdots, c_{m-1}, 1)$ $\in \mathfrak{t}$. 
By Proposition \ref{BM}, $\mu^{\xi}$ is a Bott-Morse function with 
even index and coindex. 
Furthermore, by Lemma \ref{Fix} and the fact ${\rm Crit}(\mu^{\xi}) = {\rm Fix}(T^{\xi})$, the critical set ${\rm Crit}(\mu^{\xi})$ is a finite union of generalized complex submanifolds. 
Now we shall prove the critical manifold 
${\rm Crit}(\mu^{\xi})$ intersects $Q$ transversally at $p$, that is, 
\begin{equation*}
T_{p}M = T_{p}{\rm Crit}(\mu^{\xi}) + T_{p}Q. 
\end{equation*}
For a subspace $S \subset T_{p}M$, we denote by $S^{0} \subset T_{p}^{*}M$ the annihilator of $S$; 
\begin{equation*}
S^{0} = \{ \alpha \in T_{p}^{*}M\ |\ \alpha(X) = 0\ (\forall X \in S) \}. 
\end{equation*}
Then since $(T_{p}Q)^{0} = W$, we obtain 
\begin{equation*}
(T_{p}{\rm Crit}(\mu^{\xi}) + T_{p}Q)^{0} 
= (T_{p}{\rm Crit}(\mu^{\xi}))^{0} \cap (T_{p}Q)^{0}
= (T_{p}{\rm Crit}(\mu^{\xi}))^{0} \cap W. 
\end{equation*}
Hence the critical manifold ${\rm Crit}(\mu^{\xi})$ 
intersects $Q$ transversally at $p$ if and only if 
$(T_{p}{\rm Crit}(\mu^{\xi}))^{0} \cap W = \{ 0 \}$. 
Thus we may only show that the differentials 
$(d\mu_{1})_{p}, \cdots$, $(d\mu_{m-1})_{p}$ remain linearly 
independent when restricted to the subspace $T_{p}{\rm Crit}(\mu^{\xi})$. 
Consider the vector fields 
$\xi_{1}^{+}, \cdots, \xi_{m-1}^{+}$ on $M$ defined by 
\begin{equation*}
d\mu_{i} = \omega_{+}(\xi_{i}^{+}),\quad i=1, \cdots, m-1. 
\end{equation*}
Since $\omega_{+} = gJ_{+}$, the vector field $\xi_{i}^{+}$ can be 
written as $\xi_{i}^{+} = -J_{+}g^{-1}(d\mu_{i})$. 
The $T^{m}$-invariance of the function $\mu_i$ implies 
\begin{equation*}
(\theta_{*})_{p}g^{-1}(d\mu_{i})_{p} 
= g^{-1}((\theta^{-1})^{*}d\mu_{i})_{p} 
= g^{-1}(d((\theta^{-1})^{*}\mu_{i}))_{p}
= g^{-1}(d\mu_{i})_{p}
\end{equation*}
for each $\theta \in T^{\xi}$. 
In particular, we see that the vector field $g^{-1}(d\mu_{i})$ is tangent to ${\rm Crit}(\mu^{\xi})$ because $T_{p}{\rm Crit}(\mu^{\xi}) = T_{p}{\rm Fix}(T^{\xi}) = \bigcap_{\theta \in T^{\xi}} \ker (1 - (\theta_{*})_{p})$. 
Moreover, since the critical manifold ${\rm Crit}(\mu^{\xi})$ 
is an almost complex submanifold of $(M, J_{+})$, the vector filed 
$\xi_{i}^{+} = -J_{+}g^{-1}(d\mu_{i})$ is also tangent to 
${\rm Crit}(\mu^{\xi})$. 
On the other hand, $(\xi_{1}^{+})_{p}, \cdots, (\xi_{m-1}^{+})_{p}$ 
are linearly independent on $T_{p}M$ because $p$ is regular. 
Hence they are also linearly independent on $T_{p}{\rm Crit}(\mu^{\xi})$. 
Since the $2$-form $\omega_{+}$ is still nondegenerate when 
it is restricted to ${\rm Crit}(\mu^{\xi})$, 
the $1$-forms $(d\mu_{1})_{p}, \cdots, (d\mu_{m-1})_{p}$ are linearly 
independent on $T_{p}^{*}{\rm Crit}(\mu^{\xi})$ and hence 
${\rm Crit}(\mu^{\xi})$ intersects $Q$ transversally as claimed. 
In particular, the critical set ${\rm Crit}(\mu_{m} |_{Q})$ of 
$\mu_{m} : Q \to \mathbb{R}$ is a finite union of submanifolds of $Q$ 
because ${\rm Crit}(\mu_{m} |_{Q}) = {\rm Crit}(\mu^{\xi}) \cap Q$. 

For each $X \in T_{p}M$ which is orthogonal to 
$T_{p}{\rm Crit}(\mu^{\xi})$, we have 
\begin{equation*}
(d\mu_{i})_{p}(X) 
= g_{p}(g^{-1}(d\mu_{i}), X)
= 0 
\end{equation*}
for $i = 1, \cdots, m-1$. 
This implies that the orthogonal complement 
$(T_{p}{\rm Crit}(\mu^{\xi}))^{\perp}$ of 
the subspace $T_{p}{\rm Crit}(\mu^{\xi})$ is contained in $T_{p}Q$. 
Hence the Hessian of $\mu^{\xi}$ at $p$ is nondegenerate on $T_{p}Q \cap (T_{p}{\rm Crit}(\mu^{\xi}))^{\perp} = (T_{p}{\rm Crit}(\mu^{\xi}))^{\perp}$
with even index and coindex. In other words, 
${\rm Crit}(\mu^{\xi}) \cap Q$ is the critical manifold of 
$\mu^{\xi}|_{Q}$ of even index and coindex. The same holds for 
$\mu_{m}|_{Q}$ since it only differs from $\mu^{\xi}$ by the 
constant $\sum_{i=1}^{m-1}c_{i}a_{i}$. Thus we have proved 
that the function $\mu_{m} : Q \to \mathbb{R}$ is 
a Bott-Morse function with even index and coindex. 
\end{proof}

By applying Lemma \ref{BM2}, if $(a_{1}, \cdots, a_{m-1})$ is a 
regular value for $(\mu_{1}, \cdots, \mu_{m-1})$, then the 
level set $\mu_{m}^{-1}(a_{m}) \cap Q = \mu^{-1}(a)$ is connected. 
This shows that $A_{m-1} \Rightarrow A_{m}$. 

Finally, we shall prove the third claim, that is, the image of the 
generalized moment map $\mu$ is the convex hull of the images 
of the fixed points of the action. 
By Lemma \ref{Fix}, the fixed point set ${\rm Fix}(T^{m})$ of the 
action decomposes into finitely many even dimensional connected 
submanifolds $C_{1}, \cdots, C_{N}$ of $M$. The generalized moment 
map $\mu$ is constant on each of these sets because 
$C_{i} \subset {\rm Crit}(\mu^{\xi})$ for $i = 1, \cdots, N$ and 
any $\xi \in \mathfrak{t}$. Hence there exists 
$a_{1}, \cdots, a_{N} \in \mathfrak{t}^{*}$ such that 
\begin{equation*}
\mu(C_{i}) = \{ a_{i} \},\quad i=1, \cdots, N. 
\end{equation*}
By what we have proved so far the convex hull of the 
points $a_{1}, \cdots, a_{N}$ is contained in $\mu(M)$. 
Conversely, let $a \in \mathfrak{t}^{*}$ be a point which 
is not in the convex hull of $a_{1}, \cdots, a_{N}$. Then 
there exists a vector $\xi \in \mathfrak{t}$ with rationally 
independent components such that 
\begin{equation*}
a_{i}(\xi) < a(\xi),\quad i=1, \cdots, N. 
\end{equation*}
Since the components of $\xi$ are rationally independent, 
we have ${\rm Crit}(\mu^{\xi}) = {\rm Fix}(T^{m})$. Hence 
the function $\mu^{\xi} : M \to \mathbb{R}$ 
attains its maximum on one of the sets $C_{1}, \cdots, C_{N}$. 
This implies 
\begin{equation*}
\sup_{p \in M}\mu^{\xi}(p) < a(\xi), 
\end{equation*}
and hence $a \not \in \mu(M)$. This shows that $\mu(M)$ 
is the convex hull of the points $a_{1}, \cdots, a_{N}$ and 
{\rm Theorem A} is proved. 

\begin{rem}
By applying the same arguments of 
our proof and Theorem 5.1 in \cite{Ler3}, Theorem A still holds 
in the case that $M$ is a compact connected $H$-twisted generalized complex 
orbifold. In this case, all connected components $C_{1}, \cdots, C_{N}$ of the critical set are connected generalized complex suborbifolds. 
\end{rem}
\section{Non-abelian convexity and connectedness properties}
The purpose of this section is to give a proof 
of Theorem B. Our proof is a simple generalization of 
the argument of Lerman, Meinrenken, Tolman and Woodward in 
\cite{Ler2} to generalized complex geometry. 
%In this section, we assume that all generalized moment maps have weak 
%nondegeneracy, which is defined below. 
\subsection{Weak nondegeneracy of generalized moment maps}
In this subsection, we introduce an additional property ``weak nondegeneracy" 
for generalized moment maps, which always holds for compact cases. 
\begin{defn}\label{weak_nondegeneracy}
We say that a generalized moment map $\mu : M \to \mathfrak{g}^{*}$ has {\it weak nondegeneracy} if the following equality holds for all $\xi \in \mathfrak{g}$; 
\begin{equation*}
{\rm Crit}(\mu^{\xi}) = {\rm Fix}(T^{\xi}). 
\end{equation*}
\end{defn}

\begin{ex}
Let a compact Lie group $G$ act on a symplectic manifold $(M, \omega)$ 
in a Hamiltonian way with a moment map $\mu : M \to \mathfrak{g}^{*}$. 
Then the $G$-action on $(M, \mathcal{J}_{\omega})$ is Hamiltonian with 
a generalized moment map $\mu$ and  a  moment one form $\alpha = 0$. 
In this case, the generalized moment map $\mu$ has weak nondegeneracy. 
Indeed, since $d\mu^{\xi} = \iota_{\xi_{M}}\omega$ for each $\xi \in \mathfrak{g}$, it follows that $\xi_{M} = 0$ if and only if $d\mu^{\xi} = 0$. 
\end{ex}
\begin{ex}
Consider the trivial action of a compact torus $T^{m}$ on a complex manifold 
$(M, J)$. Then by identifying the Lie algebra $\mathfrak{t}$ with $\mathbb{R}^{m}$, each holomorphic map $h = (h_{1}, \cdots, h_{m}) : M \to \mathbb{C}^{m}$ defines a generalized moment map $\mu = {\rm Im}\ h$ and a moment one form $\alpha = d({\rm Re}\ h) = (d({\rm Re}\ h_{1}), \cdots, d({\rm Re}\ h_{m}))$ for the $T^{m}$-action, where ${\rm Re}\ h$ (resp. ${\rm Im}\ h$) denotes the real part (resp. the imaginary part) of $h$. In this case, $\mu$ has weak nondegeneracy if and only if $h$ is locally constant, because $\xi_{M}$ reduces to $0$ for all $\xi \in \mathfrak{t}$. 
\end{ex}

By the former part of the proof of Proposition \ref{BM}, we see that a generalized moment map for compact manifolds always has weak nondegeneracy. 
Moreover, the latter part of the proof of Proposition \ref{BM} 
tells us that, for noncompact manifolds, a generalized moment map having 
weak nondegeneracy is nondegenerate in the sense of abstract moment maps 
in Ginzburg-Guillemin-Karshon \cite{GGK}. 

\begin{rem}\label{rem41}
Let a compact Lie group $G$ act on an $H$-twisted generalized complex 
orbifold $(M, \mathcal{J})$ in a Hamiltonian way with a generalized 
moment map $\mu : M \to \mathfrak{g}^{*}$ and a moment one form 
$\alpha \in \Omega^{1}(M; \mathfrak{g}^{*})$. If $\mu$ has weak nondegeneracy, as in the case of symplectic orbifolds, the image of the differential of the generalized moment map at a point $p \in M$ is the annihilator of 
the corresponding isotropy Lie algebra $\mathfrak{g}_{p}$. 
In particular, we see that the following conditions are equivalent: 
\begin{enumerate}
\item $p \in M$ is a regular point of $\mu$. 
\item $\mathfrak{g}_{p} = \{ 0 \}$. 
\item The $G$-action at $p$ is locally free. 
\end{enumerate}
We prove the assertion here. 
Let $\tilde{T}_{p}M$ denote the uniformized tangent space of $M$ at $p$. 
For each $\xi \in \mathfrak{g}$, weak nondegeneracy condition of the generalized moment map implies that $(d\mu^{\xi})_{p} = 0$ if and only if $(\xi_{M})_{p} = 0$. 
Since 
\begin{equation}\label{diff}
(\mu_{*})_{p}(X)(\xi) = (d\mu^{\xi})_{p}(X) 
\end{equation}
for each $X \in \tilde{T}_{p}M$, we have $(\mu_{*})_{p}(X)(\xi) = 0$ 
for all $\xi \in \mathfrak{g}_{p}$. 
This shows that the image of $(\mu_{*})_{p}$ is contained in the annihilator $(\mathfrak{g}_{p})^{0}$. 
On the other hand, the equation (\ref{diff}) implies that 
$X \in \ker (\mu_{*})_{p}$ if and only if $(d\mu^{\xi})_{p}(X) = 0$ 
for all $\xi \in \mathfrak{g}$. Hence we obtain the equation $\ker (\mu_{*})_{p} = (D\mu)_{p}^{0}$, where $(D\mu)_{p}$ is the subspace of $\tilde{T}_{p}^{*}M$ generated by the differentials $(d\mu^{\xi})_{p}$ for all $\xi \in \mathfrak{g}$ and $(D\mu)_{p}^{0} \subset T_{p}M$ is its annihilator. 
In addition, since $\dim (D\mu)_{p} = \dim \mathfrak{g} - \dim \mathfrak{g}_{p}$ by weak nondegeneracy condition, 
we have $\dim \ker (\mu_{*})_{p} = \dim M - (\dim \mathfrak{g} - \dim \mathfrak{g}_{p})$. Hence we have 

\begin{equation*}
\dim (\mu_{*})_{p}(\tilde{T}_{p}M) 
= \dim \mathfrak{g} - \dim \mathfrak{g}_{p}
= \dim (\mathfrak{g}_{p})^{0}
\end{equation*}
and so $(\mu_{*})_{p}(\tilde{T}_{p}M) = (\mathfrak{g}_{p})^{0}$. 
This shows the assertion. 
In particular, the generalized moment map has constant rank on the principal stratum $M_{\rm prin}$, an open dense subset of $M$ defined to be the intersection of the set of the points of principal orbit type with the set of smooth points of $M$. (See \cite{GGK} for the definition of the principal orbit type.)
\end{rem}

\subsection{Generalized complex cuts}
In view of symplectic geometry, we introduce 
the notion of generalized complex cutting. 
Let $(M, \mathcal{J})$ be an $H$-twisted generalized complex orbifold 
which admits a Hamiltonian circle action with a generalized moment map 
$\mu : M \to \mathbb{R}$ and a moment one form $\alpha \in \Omega^{1}(M)$. 
We assume that the generalized moment map $\mu$ has weak nondegeneracy. 
For a regular value $\varepsilon \in \mathbb{R}$ of the generalized moment map, 
consider the disjoint union 
\begin{equation*}
M_{[\varepsilon, +\infty)} = 
\mu^{-1}((\varepsilon, +\infty)) \cup M_{\varepsilon} 
\end{equation*}
obtained from the orbifold with boundary $\mu^{-1}([\varepsilon, +\infty))$ 
by collapsing the boundary under the circle action. Then the disjoint union 
$M_{[\varepsilon, +\infty)}$ admits a natural structure of a twisted 
generalized complex orbifold. To see this, consider the product 
$M \times \mathbb{C}$ of the orbifold with a complex plane. 
It has a natural product $H$-twisted generalized complex structure: 
\begin{equation*}
\mathcal{J}_{M \times \mathbb{C}} = 
\left( \begin{array}{ccc}
  \mathcal{J} &    0 \\
  0           &    \mathcal{J}_{\omega} \\
\end{array}\right), 
\end{equation*}
where $\mathcal{J}_{\omega}$ is the natural generalized complex 
structure on $\mathbb{C}$ induced by the standard symplectic structure 
$\omega = (\sqrt{-1}/2)dz \wedge d\bar z$. The function 
$\nu : M \times \mathbb{C} \to \mathbb{R}$ given by 
$\nu(p, z) = \mu(p) - (1/2) |z|^{2}$ is a generalized moment map for 
the diagonal action of the circle, and the pull back of the moment one 
form $\alpha$ by the natural projection from $M \times \mathbb{C}$ 
to $M$ is a moment one form. 
Since $\mu$ has weak nondegeneracy, so does $\nu$. 
The point $\varepsilon \in \mathbb{R}$ is a regular value of $\nu$ if and only if it is a regular value of $\mu$. 
Moreover, the map 
\begin{equation*}
\{ p \in M\ |\ \mu(p) \geq \varepsilon \} \to \nu^{-1}(\varepsilon),\ 
p \mapsto (p, \sqrt{\mu(p) - \varepsilon}) 
\end{equation*}
induces a homeomorphism from $M_{[\varepsilon, +\infty)}$ 
to the reduced space $\nu^{-1}(\varepsilon)/S^{1}$. 
By weak nondegeneracy of the generalized moment map $\nu$, we see that 
the reduced space admits a natural $\tilde{H}$-twisted generalized complex structure. 
In particular, $M_{[\varepsilon, +\infty)}$ also admits a twisted generalized 
complex structure which is induced by the $\tilde{H}$-twisted 
generalized complex structure on the orbifold $\nu^{-1}(\varepsilon)/S^{1}$. 

\begin{defn}
We call the twisted generalized complex orbifold 
$M_{[\varepsilon, +\infty)}$ the generalized complex cut of $M$ 
with respect to the ray $[\varepsilon, +\infty)$. 
\end{defn}

The construction can be generalized to general torus actions as follows. 
Consider a Hamiltonian action of an $m$-dimensional torus $T^{m}$ on 
an $H$-twisted generalized complex orbifold $(M, \mathcal{J})$ with a 
generalized moment map $\mu : M \to \mathfrak{t}^{*}$ and a moment one form 
$\alpha \in \Omega^{1}(M; \mathfrak{t}^{*})$. 
We assume that the generalized moment map $\mu$ has weak nondegeneracy. 
Let $l \subset \mathfrak{t}$ denote the integral lattice. Choose $N$ vectors $v_{j} \in l,\ j=1, \cdots, N$. 
The endomorphism 
\begin{equation*}
\mathcal{J}_{M \times \mathbb{C}^{N}} = 
\left( \begin{array}{ccc}
  \mathcal{J} &    0 \\
  0           &    \mathcal{J}_{\omega} \\
\end{array}\right) 
\end{equation*}
is an $H$-twisted generalized complex structure on an orbifold 
$M \times \mathbb{C}^{N}$, where $\mathcal{J}_{\omega}$ is 
the natural generalized complex structure on $\mathbb{C}^{N}$ 
induced by the standard symplectic structure 
$\omega = (\sqrt{-1}/2)\sum_{i=1}^{N}dz^{i} \wedge d\bar z^{i}$. 
The map $\nu : M \times \mathbb{C}^{N} \to \mathbb{R}^{N}$ with 
$j$-th component 
\begin{equation*}
\nu_{j}(p, z) = \langle \mu(p), v_{j} \rangle -\frac{1}{2}|z_{j}|^{2} 
\end{equation*}
is a generalized moment map for the action of $T^{N}$ on 
$M \times \mathbb{C}^{N}$ induced by the Lie algebra homomorphism 
$\mathbb{R}^{N} \to \mathfrak{t},\ e_{j} \mapsto v_{j}$, where 
$\{ e_{1}, \cdots, e_{N} \}$ is the standard basis of $\mathbb{R}^{N}$. 
The $\mathbb{R}^{N}$-valued $1$-form $\beta$ with $j$-th component 
$\beta_{j}(p, z) = \langle \alpha(p), v_{j} \rangle$ is a moment one form. 
Because of weak nondegeneracy of $\mu$, the generalized moment map $\nu$ also has weak nondegeneracy. 
For each $b = (b_{1}, \cdots, b_{N}) \in \mathbb{R}^{N}$, we define a convex rational polyhedral set 
\begin{equation*}
P = 
\{ x \in \mathfrak{t}^{*}\ |\ 
\langle x, v_{j} \rangle \geq b_{j},\ j = 1, \cdots, N \}. 
\end{equation*}
The generalized complex cut of $M$ with respect to a rational polyhedral 
set $P$ is the reduction of $M \times \mathbb{C}^{N}$ at $b$. 
We denote it by $M_{P}$. If $b$ is a regular value of $\nu$, then 
$M_{P}$ is a twisted generalized complex orbifold by Remark \ref{rem41}. 
Note that regular values are generic by Sard's theorem. Furthermore 
if $P$ is a compact polytope, then the fact that $P$ is generic implies that $P$ is simple, that is, 
the number of codimension one faces meeting at a given vertex is the 
same as the dimension of $P$. 

A topological description of the cut space is given by the 
following result. This is a generalization of Proposition 2.4 
in \cite{Ler2} to generalized complex geometry and 
we can apply their proof of the theorem 
by replacing moment maps with generalized moment maps. 

\begin{prop}
Let an $m$-dimensional torus $T^{m}$ act on an $H$-twisted generalized complex 
orbifold $(M, \mathcal{J})$ effectively and in a Hamiltonian way 
with a generalized moment map $\mu : M \to \mathfrak{t}^{*}$ and a moment 
one form $\alpha \in \Omega^{1}(M; \mathfrak{t}^{*})$. Suppose that the generalized moment map $\mu$ has weak nondegeneracy. Consider a generic 
rational polyhedral set $P \subset \mathfrak{t}^{*}$ and the set of all 
open faces $\mathcal{F}_{P}$. Then the topological space $\tilde{M}_{P}$ defined by 
\begin{equation*}
\tilde{M}_{P} = \bigcup_{F \in \mathcal{F}_{P}}\mu^{-1}(F)/T_{F}, 
\end{equation*}
where $T_{F} \subset T^{m}$ is the subtorus of $T^{m}$ perpendicular to $F$, 
coincides with the generalized complex cut of $M$ with respect to $P$. 
In particular, $\tilde{M}_{P}$ is an $H$-twisted generalized complex orbifold 
with a natural Hamiltonian action of the torus $T^{m}$. Moreover, the map 
$\mu_{P} : \tilde{M}_{P} \to \mathfrak{t}^{*}$ induced by the 
restriction $\mu |_{\mu^{-1}(P)}$ is a generalized moment map, and the 
descending of the restriction $\alpha |_{\mu^{-1}(P)}$ of the moment one form 
is a moment one form for this action. Consequently, 
\begin{enumerate}
\item 
the cut space $\tilde{M}_{P}$ is connected if and only if $\mu^{-1}(P)$ is connected; 
\item 
the fibers of $\mu_{P}$ are connected if and only if 
fibers of $\mu |_{\mu^{-1}(P)}$ are connected; 
\item 
$\tilde{M}_{P}$ is compact if and only if $\mu^{-1}(P)$ is compact. 
\end{enumerate}
\end{prop}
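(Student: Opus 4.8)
The plan is to follow the proof of Proposition~2.4 in \cite{Ler2}, substituting the hypothesis of weak nondegeneracy for the nondegeneracy that symplectic reduction supplies automatically. Recall that, by definition, the generalized complex cut of $M$ with respect to $P$ is the reduced space $M_{P} = \nu^{-1}(b)/T^{N}$, where $\nu : M \times \mathbb{C}^{N} \to \mathbb{R}^{N}$ has $j$-th component $\nu_{j}(p,z) = \langle \mu(p), v_{j}\rangle - \frac{1}{2}|z_{j}|^{2}$; genericity of $P$ makes $b$ a regular value of $\nu$, and since $\mu$, and hence $\nu$, has weak nondegeneracy, Remark~\ref{rem41} guarantees that $M_{P}$ is an $H$-twisted generalized complex orbifold carrying a residual Hamiltonian $T^{m}$-action. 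The first step is to write down the comparison map: for $p \in \mu^{-1}(P)$ one has $\langle \mu(p), v_{j}\rangle - b_{j} \geq 0$ for every $j$, so $p \longmapsto \bigl(p, \bigl(\sqrt{2(\langle \mu(p), v_{j}\rangle - b_{j})}\bigr)_{j=1}^{N}\bigr)$ is a continuous $T^{m}$-equivariant map $\mu^{-1}(P) \to \nu^{-1}(b)$, which composed with the orbit projection yields a continuous surjection $\mu^{-1}(P) \to M_{P}$.

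Next I would analyse this map stratum by stratum over the open faces $F$ of $P$. If $p \in \mu^{-1}(F)$, then $\langle \mu(p), v_{j}\rangle = b_{j}$ exactly for the indices $j$ in the set $I_{F}$ cutting out $F$, so the coordinate $z_{j}$ vanishes precisely for $j \in I_{F}$; the vectors $\{v_{j}\}_{j \in I_{F}}$ generate, up to finite index, the integral lattice of the subtorus $T_{F} \subset T^{m}$ perpendicular to $F$, while for $j \notin I_{F}$ the coordinates $z_{j}$ are nonzero, so that the corresponding subtorus of $T^{N}$ acts locally freely there. Reducing in two stages — first by this complementary subtorus, which returns a neighbourhood in $M$, then by the subtorus $T^{I_{F}}$ spanned by the $v_{j}$ with $j \in I_{F}$, which acts on $M$ through $T_{F}$ — identifies the part of $M_{P}$ lying over $F$ with $\mu^{-1}(F)/T_{F}$. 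Letting $F$ range over all faces, these identifications assemble into a bijection $\tilde{M}_{P} \to M_{P}$, which one checks is a homeomorphism: over $\mathrm{int}\,P$ the torus $T^{N}$ acts locally freely and $p \mapsto (p,z(p))$ is a section, so the restriction is a homeomorphism onto an open dense subset, and properness of the projection $\nu^{-1}(b) \to \mu^{-1}(P)$ controls the behaviour along the lower strata. Transporting the reduced $H$-twisted generalized complex structure, the residual $T^{m}$-action, and the reduced moment data through this homeomorphism — all legitimate because weak nondegeneracy of $\nu$ makes $M_{P}$ a twisted generalized complex orbifold — shows that $\tilde{M}_{P}$ is an $H$-twisted generalized complex orbifold with Hamiltonian $T^{m}$-action, and that the descents $\mu_{P}$ of $\mu|_{\mu^{-1}(P)}$ and of $\alpha|_{\mu^{-1}(P)}$ form a generalized moment map and moment one form.

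Finally, the three consequences follow by passing along the chain $\mu^{-1}(P) \leftarrow \nu^{-1}(b) \to \nu^{-1}(b)/T^{N} = M_{P} \cong \tilde{M}_{P}$. The left-hand projection $(p,z)\mapsto p$ is proper and surjective with compact connected fibers, and the right-hand map is the quotient by the connected compact group $T^{N}$; hence connectedness and compactness each transfer simultaneously across the whole chain, which yields (1) and (3). For (2), the fiber $\mu_{P}^{-1}(a)$ over a point $a$ in an open face $F$ is identified with $\mu^{-1}(a)/T_{F}$, and since $T_{F}$ is a connected torus this is connected if and only if $\mu^{-1}(a)$ is; running over $a \in P$ gives (2). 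I expect the main obstacle to be the stratumwise identification in the second step: one must verify carefully that the two-stage reduction over each face $F$ really produces $\mu^{-1}(F)/T_{F}$ together with the correct orbifold generalized complex structure, and that these local descriptions glue continuously across faces of different dimensions. This is exactly where weak nondegeneracy is indispensable — it is what makes every intermediate reduction a twisted generalized complex orbifold — so that once it is in force the argument of \cite{Ler2} carries over essentially verbatim.
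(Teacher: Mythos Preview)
Your proposal is correct and takes essentially the same approach as the paper. The paper does not actually give a detailed proof of this proposition; it simply states that ``This is a generalization of Proposition 2.4 in \cite{Ler2} to generalized complex geometry and we can apply their proof of the theorem by replacing moment maps with generalized moment maps,'' and your outline is precisely a fleshing-out of that remark, with weak nondegeneracy invoked (via Remark~\ref{rem41}) at exactly the points where the symplectic argument would have used nondegeneracy of $\omega$.
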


Using the technique of generalized complex cuts, we can extend Theorem A 
to the case that $M$ is a noncompact orbifold and the generalized moment map has weak nondegeneracy. 
The proof is the same with the proof of Theorem 4.3 in \cite{Ler2}, 
except one must use the generalized complex cuts. 
%First we make an elementary observation. 
%A proof is given by exactly the same way with that of Lemma 4.2 in \cite{Ler2}. 
%\begin{lem}
%Let an $m$-dimensional torus $T^{m}$ act on a connected 
%$H$-twisted generalized complex orbifold $(M, \mathcal{J})$ in a 
%Hamiltonian way with a generalized moment map $\mu : M \to \mathfrak{t}^{*}$ 
%and a moment one form $\alpha \in \Omega^{1}(M; \mathfrak{t}^{*})$. 
%Assume that $\mu$ is proper as a map into a convex open set 
%$U \subset \mathfrak{t}^{*}$. Then for each compact set 
%$K \subset U$ there exists a generic rational polytope 
%$P \subset U$ such that $K$ is contained in the interior of $P$. 
%\end{lem}

\begin{thm}\label{noncompact}
Let an $m$-dimensional torus $T^{m}$ act on a connected $H$-twisted generalized complex orbifold $(M, \mathcal{J})$ in a Hamiltonian way with a generalized moment map $\mu : M \to \mathfrak{t}^{*}$ and a moment one form $\alpha \in \Omega^{1}(M; \mathfrak{t}^{*})$. 
If $\mu$ is proper as a map into a convex open set $U \subset \mathfrak{t}^{*}$ and has weak nondegeneracy, then 
\begin{enumerate}
\item
the image of $\mu$ is convex, 
\item
each fiber of $\mu$ is connected, and 
\item
if for every compact set $K \subset \mathfrak{t}^{*}$, 
the list of isotropy algebras for the $T^{m}$-action on $\mu^{-1}(K)$ 
is finite, then the image $\mu(M)$ is the intersection of $U$ 
with a rational locally polyhedral set. 
\end{enumerate}
\end{thm}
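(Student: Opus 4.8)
The plan is to repeat the proof of Theorem~4.3 in \cite{Ler2}, substituting the generalized complex cuts constructed above for symplectic cuts and the orbifold version of Theorem~A (see the remark following its proof) for the compact abelian convexity theorem; weak nondegeneracy of $\mu$ is exactly what guarantees, through Remark~\ref{rem41}, that all the auxiliary spaces occurring are again weakly nondegenerate $H$-twisted generalized complex orbifolds. First I would reduce to the case of an \emph{effective} $T^{m}$-action, as required by the cut-space description above: if $N\subset T^{m}$ is the subtorus acting trivially then $\xi_{M}=0$ for $\xi\in\mathfrak{n}$, so weak nondegeneracy forces $d\mu^{\xi}=0$ and $\mu^{\xi}$ is constant on the connected $M$; hence $\mu(M)$ lies in an affine translate of $\mathfrak{n}^{\circ}\subset\mathfrak{t}^{*}$, the map $\mu$ descends to a generalized moment map for the effective $(T^{m}/N)$-action, and (1)--(3) for $T^{m}$ follow from the corresponding statements for $T^{m}/N$ on the intersection of $U$ with that affine subspace (which is again convex and open there).

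Next I would verify the local hypotheses of the local-to-global principle for proper maps used in \cite{Ler2}: for every $p\in M$ some connected open neighborhood $O$ of $p$ in $M$ has $\mu(O)$ convex and all fibers of $\mu|_{O}$ connected. Put $a=\mu(p)$. Using properness of $\mu$ into the open set $U$, and the fact that generic values of the cut moment map $\nu$ on $M\times\mathbb{C}^{N}$ are dense by Sard's theorem, choose a compact generic rational polytope $\Delta\subset U$ with $a$ in its interior and small enough that $\mu^{-1}(\Delta)$ deformation retracts onto the compact set $\mu^{-1}(a)$, so that the connected components of $\mu^{-1}(\Delta)$ are in bijection with those of $\mu^{-1}(a)$. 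Then $\mu^{-1}(\Delta)$ is compact, so by the preceding description of the cut space $\tilde M_{\Delta}=\bigcup_{F\in\mathcal{F}_{\Delta}}\mu^{-1}(F)/T_{F}$ is a compact $H$-twisted generalized complex orbifold with a Hamiltonian $T^{m}$-action and generalized moment map $\mu_{\Delta}$ induced by $\mu|_{\mu^{-1}(\Delta)}$, and $\mu_{\Delta}(\tilde M_{\Delta})=\mu(M)\cap\Delta$. Applying Theorem~A to the connected component $Y$ of $\tilde M_{\Delta}$ arising from the component $Z$ of $\mu^{-1}(\Delta)$ with $p\in Z$ (a compact connected generalized complex orbifold), $\mu_{\Delta}(Y)$ is a convex polytope and the fibers of $\mu_{\Delta}|_{Y}$ are connected. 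Taking $O$ to be the corresponding connected component of the open set $\mu^{-1}(\mathrm{int}\,\Delta)$, one gets $\mu(O)=\mu_{\Delta}(Y)\cap\mathrm{int}\,\Delta$, which is convex, and $(\mu|_{O})^{-1}(b)$ connected for each $b\in\mathrm{int}\,\Delta$. Since $M$ is connected and $\mu$ is proper into $U$ (hence $\mu(M)$ is relatively closed in the convex open set $U$), the local-to-global principle now yields that $\mu(M)$ is convex and that every fiber $\mu^{-1}(a)=(\mu|_{O})^{-1}(a)$ is connected, which proves (1) and (2).

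For (3) I would observe that the construction above exhibits $\mu(M)$, near each of its points $a$, as the trace of the convex polytope $\mu_{\Delta}(Y)$ on a neighborhood of $a$; the facets of $\mu_{\Delta}(Y)$ other than those inherited from $\partial\Delta$ have rational conormals, because the local analysis behind Proposition~\ref{BM} — the identity (\ref{eq3}) together with the Hessian computation on ${\rm Crit}(\mu^{\xi})={\rm Fix}(T^{\xi})$ — presents $\mu$ near a fixed-point component as a cone over a rational polytope whose facet directions are governed by isotropy weights lying in the weight lattice. Thus $\mu(M)$ is rational locally polyhedral at each of its points, and it remains to glue these local polyhedra into a single rational locally polyhedral set $X$ with $U\cap X=\mu(M)$, extending the finitely many local facets coherently across $\partial U$; this is carried out as in \cite{Ler2}, where the hypothesis that $\mu^{-1}(K)$ carries only finitely many isotropy algebras for each compact $K\subset\mathfrak{t}^{*}$ is precisely what keeps the number of facet conormals locally finite, so that the local descriptions are mutually compatible.

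I expect the main obstacle to be this last gluing in (3): assembling the local polyhedral pieces into a globally coherent rational locally polyhedral set is the only step that genuinely uses the finiteness hypothesis and that must control the face combinatorics uniformly. A secondary point requiring care, also handled as in \cite{Ler2}, is the control of the connected components of $\mu^{-1}(\Delta)$ for small $\Delta$, which legitimizes the componentwise application of Theorem~A; everything else is a mechanical transcription of the symplectic argument with symplectic cuts replaced by generalized complex cuts.
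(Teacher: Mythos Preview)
Your proposal is correct and follows essentially the same approach as the paper: the paper's own proof consists of the single sentence ``The proof is the same with the proof of Theorem 4.3 in \cite{Ler2}, except one must use the generalized complex cuts,'' and you have simply spelled out in detail what that entails---the reduction to an effective action, the cut-and-apply-Theorem~A argument at each point, and the local-to-global principle from \cite{Ler2}. Your identification of the gluing step in (3) as the one genuinely dependent on the finiteness hypothesis is also accurate.
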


\subsection{The cross-section theorem}
Here we recall the notion of {\it slices} for group actions and 
prove a generalized complex geometry analogue of 
the cross-section theorem in symplectic geometry. 
\begin{defn}
Suppose that a group $G$ acts on an orbifold $M$. Given 
$p \in M$ with isotropy group $G_{p}$, a suborbifold 
$U \subset M$ containing $p$ is called a slice at $p$ if 
$U$ is $G_{p}$-invariant, $G \cdot U$ is a neighborhood of $p$, 
and the map 
\begin{equation*}
G \times_{G_{p}} U \to G \cdot U,\ [a, u] \mapsto a \cdot u 
\end{equation*}
is an isomorphism. 
%In other words, $G \cdot q \cap U = G_{p} \cdot q$ and 
%$G_{q} \subset G_{p}$ for all $q \in U$. 
\end{defn}
Consider the coadjoint action of a connected compact Lie group $G$ 
on $\mathfrak{g}^{*}$. For each $x \in \mathfrak{g}^{*}$, there is a 
unique largest open subset 
$U_{x} \subset \mathfrak{g}_{x}^{*} \subset \mathfrak{g}^{*}$ which is 
a slice at $x$. We call $U_{x}$ the natural slice at $x$ for the coadjoint 
action. A detailed construction can be seen in \cite{Ler2}. 
\begin{thm}[Cross-section]\label{cross2}
Let a compact connected Lie group $G$ act on an 
$H$-twisted generalized complex orbifold $(M, \mathcal{J})$ in a 
Hamiltonian way with a generalized moment map 
$\mu: M \to \mathfrak{g}^{*}$ and a moment one form 
$\alpha \in \Omega^{1}(M; \mathfrak{g}^{*})$. Consider the natural 
slice $U$ at $x \in \mathfrak{g}^{*}$ for the coadjoint action. Then 
the cross-section $R = \mu^{-1}(U)$ is a $G_{x}$-invariant 
generalized complex suborbifold of $M$, where $G_{x}$ is the isotropy 
group of $x$. Furthermore the $G_{x}$-action on $R$ is Hamiltonian with 
a generalized moment map $\mu_{R} := \mu |_{R}$ and a moment one form 
$\alpha |_{R}$, the restriction of $\alpha$ to $R$. 
\end{thm}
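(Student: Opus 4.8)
The plan is to follow the Lerman--Meinrenken--Tolman--Woodward proof of the symplectic cross-section theorem, in three steps: show $R=\mu^{-1}(U)$ is a $G_x$-invariant suborbifold by transversality, show the $\sqrt{-1}$-eigenbundle $L$ restricts to a generalized complex structure on $R$, and verify the Hamiltonian conditions for $G_x$. Fix a $G$-invariant inner product on $\mathfrak{g}$ and write $\mathfrak{g}=\mathfrak{g}_x\oplus\mathfrak{m}$ with $\mathfrak{m}=\mathfrak{g}_x^{\perp}$, so $[\mathfrak{g}_x,\mathfrak{m}]\subseteq\mathfrak{m}$, and dually $\mathfrak{g}^{*}=\mathfrak{g}_x^{*}\oplus\mathfrak{m}^{*}$ with $\mathfrak{m}^{*}=(\mathfrak{g}_x)^{0}$ and $\mathrm{ad}^{*}_{\mathfrak{m}}\mathfrak{g}_x^{*}\subseteq\mathfrak{m}^{*}$. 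Recall that the natural slice $U$ is an open $G_x$-invariant subset of $\mathfrak{g}_x^{*}$ with $\mathfrak{g}_y\subseteq\mathfrak{g}_x$ for all $y\in U$. Since $\mu$ is $G$-equivariant and $U$ is $G_x$-invariant, $R$ is $G_x$-invariant, and for $p\in R$ we have $\mathfrak{g}_p\subseteq\mathfrak{g}_{\mu(p)}\subseteq\mathfrak{g}_x$, hence $\mathfrak{g}_p\cap\mathfrak{m}=\{0\}$. Rewriting the defining condition $\xi_{M}-\sqrt{-1}(d\mu^{\xi}+\sqrt{-1}\alpha^{\xi})\in L$ as $\mathcal{J}(\xi_{M}+\alpha^{\xi})=d\mu^{\xi}$ and $\mathcal{J}(d\mu^{\xi})=-(\xi_{M}+\alpha^{\xi})$, one reads off that $d\mu^{\xi}_{p}=0$ forces $\xi_{M}(p)=0$, so $\ker(\xi\mapsto d\mu^{\xi}_{p})\subseteq\mathfrak{g}_p\subseteq\mathfrak{g}_x$; dualising gives $\mathrm{im}(\mu_{*})_{p}\supseteq(\mathfrak{g}_x)^{0}=\mathfrak{m}^{*}$, whence $\mathrm{im}(\mu_{*})_{p}+\mathfrak{g}_x^{*}=\mathfrak{g}^{*}$. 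Thus $\mu$ is transverse to the submanifold $U\subseteq\mathfrak{g}^{*}$ along $R$, so $R$ is a $G_x$-invariant suborbifold of codimension $\dim\mathfrak{m}$, with $T_{p}R=(\mu_{*})_{p}^{-1}(\mathfrak{g}_x^{*})$ and conormal $(T_{p}R)^{0}=\{d\mu^{\xi}_{p}:\xi\in\mathfrak{m}\}$. Moreover, for $\xi\in\mathfrak{m}$ one has $(\mu_{*})_{p}(\xi_{M}(p))=\mathrm{ad}^{*}_{\xi}\mu(p)\in\mathfrak{m}^{*}$ while $(\mu_{*})_{p}(T_{p}R)\subseteq\mathfrak{g}_x^{*}$, so $\xi_{M}(p)\in T_{p}R$ would give $\xi\in\mathfrak{g}_{\mu(p)}\cap\mathfrak{m}=\{0\}$; hence $\{\xi_{M}(p):\xi\in\mathfrak{m}\}\oplus T_{p}R=\tilde{T}_{p}M$.

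Working in a uniformizing chart, I would then show that $L$ restricts to $R$. Since $\{\xi_{M}(p):\xi\in\mathfrak{m}\}\subseteq\pi(L_{p})$ and these directions are complementary to $T_{p}R$, one gets $L_{p}+(T_{p}R\oplus\tilde{T}_{p}^{*}M)\otimes\mathbb{C}=(\tilde{T}_{p}M\oplus\tilde{T}_{p}^{*}M)\otimes\mathbb{C}$, so $L_{p}\cap(T_{p}R\oplus\tilde{T}_{p}^{*}M)\otimes\mathbb{C}$ has constant dimension $\dim R$; since it maps onto $(L_{R})_{p}$, which also has dimension $\dim R$, this is an isomorphism and $L_{R}$ is a subbundle of $(TR\oplus T^{*}R)\otimes\mathbb{C}$. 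For $L_{R}\cap\bar{L}_{R}=\{0\}$: given $\zeta=X+i^{*}\beta\in(L_{R})_{p}\cap\overline{(L_{R})_{p}}$ with lifts $X+\beta\in L_{p}$, $X+\gamma\in\bar{L}_{p}$ and $i^{*}\beta=i^{*}\gamma$ (so $\beta-\gamma$ is conormal to $R$), applying $\mathcal{J}$ to $(X+\beta)-(X+\gamma)=\beta-\gamma$ and projecting to $\tilde{T}_{p}M\otimes\mathbb{C}$ gives $2\sqrt{-1}X=\pi(\mathcal{J}(\beta-\gamma))$; but $\beta-\gamma=d\mu^{\xi}_{p}$ for some $\xi\in\mathfrak{m}\otimes\mathbb{C}$ and $\pi(\mathcal{J}(d\mu^{\xi}_{p}))=-\xi_{M}(p)$, so $X\in\{\xi_{M}(p):\xi\in\mathfrak{m}\otimes\mathbb{C}\}\cap(T_{p}R\otimes\mathbb{C})=\{0\}$. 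With $X=0$, $\zeta=i^{*}\beta$ with $\beta\in L_{p}$ and $\gamma\in\bar{L}_{p}$ both purely covectorial and $\beta-\gamma=d\mu^{\xi}_{p}$; then $\mathcal{J}(\beta-\gamma)=\sqrt{-1}(\beta+\gamma)$ is covectorial, while it also equals $-(\xi_{M}(p)+\alpha^{\xi}_{p})$, forcing $\xi_{M}(p)=0$, so $\xi\in(\mathfrak{g}_{p}\cap\mathfrak{m})\otimes\mathbb{C}=\{0\}$ and $\beta=\gamma\in L_{p}\cap\bar{L}_{p}=\{0\}$, giving $\zeta=0$. Hence $R$ is a generalized complex suborbifold and $L_{R}$ is an $i^{*}H$-twisted generalized complex structure on $R$, which is $G_x$-invariant since $\mathcal{J}$ is $G$-invariant and $R$ is $G_x$-stable. (Alternatively one could fix a $G$-invariant compatible generalized almost complex structure and check that $T_{p}R$ is invariant under the associated $J_{\pm}$ before invoking Proposition \ref{prop21}; the computation above is more direct and avoids that.)

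Finally I would check the Hamiltonian conditions for $G_x$ with $\mu_{R}:=\mu|_{R}$, which is $\mathfrak{g}_x^{*}$-valued since $\mu(R)\subseteq U\subseteq\mathfrak{g}_x^{*}$, and $\alpha_{R}^{\xi}:=i^{*}\alpha^{\xi}$ for $\xi\in\mathfrak{g}_x$. Equivariance of $\mu_{R}$ follows because $\mathfrak{g}_x^{*}$ is $G_x$-stable under the coadjoint action. For $\xi\in\mathfrak{g}_x$, $G_x$-invariance of $R$ makes $\xi_{M}$ tangent to $R$, say $\xi_{R}:=\xi_{M}|_{R}$; since $\xi_{M}-\sqrt{-1}(d\mu^{\xi}+\sqrt{-1}\alpha^{\xi})\in L$ has $TM$-component in $TR$, its image under $X+\beta\mapsto X+i^{*}\beta$, namely $\xi_{R}-\sqrt{-1}(d\mu_{R}^{\xi}+\sqrt{-1}\alpha_{R}^{\xi})$ (using $i^{*}d\mu^{\xi}=d\mu_{R}^{\xi}$), lies in $L_{R}$; and $\iota_{\xi_{R}}(i^{*}H)=i^{*}(\iota_{\xi_{M}}H)=i^{*}(d\alpha^{\xi})=d\alpha_{R}^{\xi}$. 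Thus the $G_x$-action on $R$ is Hamiltonian with generalized moment map $\mu_{R}$ and moment one form $\alpha_{R}$. (If $\mu$ has weak nondegeneracy, so does $\mu_{R}$, since for $\xi\in\mathfrak{g}_x$ the form $d\mu^{\xi}_{p}$ vanishes on the complement $\{\eta_{M}(p):\eta\in\mathfrak{m}\}$ of $T_{p}R$.)

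The step I expect to require the most care is the transversality set-up in the first two paragraphs: extracting simultaneously, from the single identity $\mathcal{J}(d\mu^{\xi})=-(\xi_{M}+\alpha^{\xi})$ and the slice property $\mathfrak{g}_y\subseteq\mathfrak{g}_x$, both that $\mathrm{im}(\mu_{*})_{p}\supseteq\mathfrak{m}^{*}$ (so $R$ is a suborbifold) and that the $\mathfrak{m}$-orbit directions are complementary to $T_{p}R$ (which drives the subbundle property and $L_{R}\cap\bar{L}_{R}=\{0\}$), and keeping the Lie-theoretic bookkeeping of $\mathfrak{g}=\mathfrak{g}_x\oplus\mathfrak{m}$ consistent on the orbifold uniformizing charts. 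After that the argument is formal and parallels \cite{Ler2}.
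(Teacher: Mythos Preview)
Your proof is correct, and it takes a genuinely different route from the paper's. The paper proves that $R$ is a generalized complex suborbifold via the pure spinor description: writing a local generator $\varphi_{r}=\exp(B+\sqrt{-1}\omega)\wedge\Omega$ of $L$, it shows that $i^{*}\varphi_{r}$ is again nondegenerate by checking $i^{*}\Omega\wedge i^{*}\bar\Omega\neq 0$ and that $i^{*}\omega$ is nondegenerate on the relevant subspace. The key step is a lemma that $\mathfrak{m}_{M}(r)$ is a \emph{symplectic} subspace of $(S_{r},\omega)$, proved by identifying $\omega|_{\mathfrak{m}_{M}(r)}$ with a piece of the Kostant--Kirillov form on the coadjoint orbit $G\cdot y$ and using that $G_{x}\cdot y$ is a symplectic submanifold of $G\cdot y$; this gives both the decomposition $\tilde{T}_{r}M=\tilde{T}_{r}R\oplus\mathfrak{m}_{M}(r)$ and the nondegeneracy of $i^{*}\omega$ on $S_{r}(R)$. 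By contrast, you work directly with the eigenbundle and the single identity $\mathcal{J}(d\mu^{\xi})=-(\xi_{M}+\alpha^{\xi})$: the complementarity $\tilde{T}_{p}M=T_{p}R\oplus\mathfrak{m}_{M}(p)$ comes from pure Lie theory ($\mathrm{ad}^{*}_{\mathfrak{m}}\mathfrak{g}_{x}^{*}\subseteq\mathfrak{m}^{*}$ and $\mathfrak{g}_{y}\subseteq\mathfrak{g}_{x}$), the subbundle property of $L_{R}$ from a clean dimension count on $L_{p}\cap(T_{p}R\oplus\tilde{T}_{p}^{*}M)_{\mathbb{C}}$, and $L_{R}\cap\bar{L}_{R}=\{0\}$ from an elementary eigenvalue chase using that $(T_{p}R)^{0}=\{d\mu^{\xi}_{p}:\xi\in\mathfrak{m}\}$. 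Your argument is more elementary---it never invokes pure spinors or the coadjoint-orbit symplectic geometry---and it makes transparent that only the real/imaginary decomposition of the Hamiltonian condition is used; the paper's approach, on the other hand, exhibits the induced pure spinor on $R$ explicitly and keeps a closer parallel to the symplectic cross-section theorem, which is useful if one also cares about the local normal form. Your transversality discussion is also more explicit than the paper's (which simply invokes equivariance and the fact that coadjoint orbits meet $U$ transversally), and the final Hamiltonian verification is essentially the same in both.
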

We shall give a proof of Theorem \ref{cross2} below. 
First note that since the slice $U$ is $G_{x}$-invariant and 
the generalized moment map $\mu$ is equivariant, the cross-section 
$R=\mu^{-1}(U)$ is also $G_{x}$-invariant. By definition of the 
slice, coadjoint orbits intersect $U$ transversally. 
Since the generalized moment map is equivariant, it is transversal 
to $U$ as well. Hence the cross-section is a suborbifold of $M$. 
We need to show that the cross-section $R$ is a generalized 
complex suborbifold of $M$. We shall show that 
$(L_{R})_{r} \subset (\tilde{T}_{r}R \oplus \tilde{T}^{*}_{r}R) 
\otimes \mathbb{C}$ defines an $i^{*}H$-twisted generalized complex 
structure of $R$, where $\tilde{T}_{r}R$ is the uniformized tangent space of 
$R$ at $r \in R$. Then we can see easily that $R$ is a generalized complex 
suborbifold. Consider a local representative $\varphi$ of $L$. 
If the pull back $(i^{*}\varphi)_{r}$ is a nondegenerate complex pure spinor, 
then it is a local representative of $L_{R}$ and 
hence $L_{R}$ defines an $i^{*}H$-twisted generalized complex structure. 
Hence we may only show that the pull back $(i^{*}\varphi)_{r}$ is a 
nondegenerate complex pure spinor of $R$ below. 

Since $\varphi_{r}$ is a nondegenerate complex pure spinor, there exists 
a decomposable complex $k$-form 
$\Omega \in \wedge^{k}\tilde{T}^{*}_{r}M \otimes \mathbb{C}$ and 
a complex $2$-form 
$B + \sqrt{-1}\omega \in \wedge^{2}\tilde{T}^{*}_{r}M \otimes \mathbb{C}$ 
such that 
\begin{equation*}
\varphi_{r} = \exp(B + \sqrt{-1}\omega) \wedge \Omega. 
\end{equation*}
The $2$-form $\omega$ is nondegenerate on the $2(n-k)$-dimensional subspace 
\begin{equation*}
S_{r} = \{ X \in \tilde{T}_{r}M\ |\ i_{X}(\Omega \wedge \bar \Omega) = 0 \}. 
\end{equation*}
Moreover, we claim that it satisfies that for each $\xi \in \mathfrak{g}$, 
$i_{(\xi_{M})_{r}}\omega = (d\mu^{\xi})_{r}$ on $S_{r}$. 
Indeed, since $\iota_{\xi_{M}}\varphi_{r} - 
\sqrt{-1}(d\mu^{\xi} + \sqrt{-1}\alpha^{\xi}) \wedge \varphi_{r} = 0$ 
by the definition of generalized moment maps, we have $\iota_{\xi_{M}}\Omega = 0$ and hence 
\begin{equation*}
\iota_{\xi_{M}}(B + \sqrt{-1}\omega) \wedge \Omega
= \sqrt{-1}(d\mu^{\xi} + \sqrt{-1}\alpha^{\xi}) \wedge \Omega. 
\end{equation*}
If we write $\Omega = \theta^{1} \wedge \cdots \wedge \theta^{k}$ by 
some $1$-forms $\theta^{1}, \cdots, \theta^{k} 
\in {\tilde T}^{*}_{r}M \otimes \mathbb{C}$, the vectors 
$\theta^{1}, \cdots, \theta^{k}, \bar{\theta^{1}}, \cdots, \bar{\theta^{k}}$ 
are linearly independent because the complex pure spinor 
$\varphi_{r} = \exp(B + \sqrt{-1}\omega) \wedge \Omega$ is nondegenerate. 
This implies that $\iota_{X}\Omega = 0$ and 
\begin{equation*}
\iota_{\xi_{M}}(B + \sqrt{-1}\omega)(X) \wedge \Omega
= \sqrt{-1}(d\mu^{\xi} + \sqrt{-1}\alpha^{\xi})(X) \wedge \Omega 
\end{equation*}
for each $X \in S_{r}$. Hence we obtain $i_{(\xi_{M})_{r}}\omega(X) = (d\mu^{\xi})_{r}(X)$
for each $\xi \in \mathfrak{g}$ and $X \in S_{r}$. 
This shows the claim. 

Consider the complex form on $\wedge^{\bullet}\tilde{T}^{*}_{r}R \otimes \mathbb{C}$ defined by 
\begin{equation*}
(i^{*}\varphi)_{r} = \exp(i^{*}B + \sqrt{-1}i^{*}\omega) \wedge i^{*}\Omega. 
\end{equation*}
To prove that $(i^{*}\varphi)_{r}$ is a nondegenerate complex pure spinor, 
we need to show the following statements: 
\begin{enumerate}
\item 
$i^{*}\Omega \wedge i^{*}\bar \Omega \not= 0$, 
in particular $i^{*}\Omega \not= 0$. 
\item 
$i^{*}\omega$ is nondegenerate on the subspace 
$S_{r}(R) = \{ X \in \tilde{T}_{r}R\ |\ i_{X}(i^{*}\Omega \wedge i^{*}\bar \Omega) = 0 \}$. 
\end{enumerate}

We first show the claim $1$. For the Lie algebra $\mathfrak{g}$ of $G$, 
$\mathfrak{g}_{x}$ denotes the Lie algebra of the stabilizer of 
$x \in \mathfrak{g}^{*}$. Then there exists a $G_{x}$-invariant subspace 
$\mathfrak{m}$ such that 
$\mathfrak{g} = \mathfrak{g}_{x} \oplus \mathfrak{m}$. 
For $y = \mu(r)$, the uniformized tangent space $\tilde{T}_{y}U$ is just the 
annihilator of $\mathfrak{m}$. 
Consider the subspace 
$\mathfrak{m}_{M}(r) = \{ (\xi_{M})_{r}\ |\ \xi \in \mathfrak{m} \}$. 
Note that $\mathfrak{m}_{M}(r) \subset S_{r}$ and 
$\dim \mathfrak{m}_{M}(r) = \dim \mathfrak{m}$. Now we show the 
following lemmata. 

\begin{lem}\label{lem41}
The subspace $\mathfrak{m}_{M}(r)$ is a symplectic vector space 
with respect to the $2$-form $\omega$ and is perpendicular to 
$S_{r} \cap \tilde{T}_{r}R$. 
\end{lem}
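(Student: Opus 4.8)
The plan is to extract both assertions from the identity $i_{(\xi_{M})_{r}}\omega = (d\mu^{\xi})_{r}$ on $S_{r}$ established just above, combined with the equivariance of $\mu$ and the defining properties of the natural slice $U$ at $x$. Writing $y = \mu(r)$, equivariance of $\mu$ gives $(d\mu)_{r}((\eta_{M})_{r}) = -{\rm ad}^{*}_{\eta}y$ for all $\eta \in \mathfrak{g}$ (any sign convention here is harmless for what follows), where $\langle {\rm ad}^{*}_{\eta}y, \xi\rangle = \langle y, [\eta,\xi]\rangle$; by construction of the natural slice, $\tilde{T}_{y}U = {\rm Ann}(\mathfrak{m})$ and $y \in \mathfrak{g}_{x}^{*} \cong {\rm Ann}(\mathfrak{m}) \subset \mathfrak{g}^{*}$, so $\langle y, \mathfrak{m}\rangle = 0$; since $\mathfrak{m}$ is ${\rm Ad}(G_{x})$-invariant we have $[\mathfrak{g}_{x}, \mathfrak{m}] \subset \mathfrak{m}$ and $\mathfrak{g}_{x} \cap \mathfrak{m} = \{0\}$; and, crucially, for $y \in U$ one has $G_{y} \subset G_{x}$, hence $\mathfrak{g}_{y} \subset \mathfrak{g}_{x}$ (see \cite{Ler2}). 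We will also use, as noted before the statement, that $\mathfrak{m}_{M}(r) \subset S_{r}$ and that $\xi \mapsto (\xi_{M})_{r}$ is a linear isomorphism from $\mathfrak{m}$ onto $\mathfrak{m}_{M}(r)$.

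First I would prove the perpendicularity. Let $\xi \in \mathfrak{m}$ and $v \in S_{r} \cap \tilde{T}_{r}R$. Since $v \in S_{r}$, the identity above gives $\omega((\xi_{M})_{r}, v) = (d\mu^{\xi})_{r}(v) = \langle (d\mu)_{r}(v), \xi\rangle$; and since $v \in \tilde{T}_{r}R$ and $\mu$ is transversal to $U$, the vector $(d\mu)_{r}(v)$ lies in $\tilde{T}_{y}U = {\rm Ann}(\mathfrak{m})$, which annihilates $\xi \in \mathfrak{m}$. Hence $\omega((\xi_{M})_{r}, v) = 0$, so $\mathfrak{m}_{M}(r)$ is $\omega$-perpendicular to $S_{r} \cap \tilde{T}_{r}R$.

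Next I would prove that $\omega$ is nondegenerate on $\mathfrak{m}_{M}(r)$. Since $\mathfrak{m}_{M}(r) \subset S_{r}$, the identity yields, for $\xi,\eta \in \mathfrak{m}$,
\begin{equation*}
\omega((\xi_{M})_{r}, (\eta_{M})_{r}) = (d\mu^{\xi})_{r}((\eta_{M})_{r}) = \langle (d\mu)_{r}((\eta_{M})_{r}), \xi\rangle = \langle y, [\xi,\eta]\rangle,
\end{equation*}
so, via the isomorphism $\mathfrak{m} \cong \mathfrak{m}_{M}(r)$, it suffices to show that the alternating form $B_{y}(\xi,\eta) := \langle y, [\xi,\eta]\rangle$ is nondegenerate on $\mathfrak{m}$. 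I would compute its radical: if $\xi \in \mathfrak{m}$ satisfies $B_{y}(\xi, \mathfrak{m}) = 0$, then for every $\zeta \in \mathfrak{g}_{x}$ we also get $B_{y}(\xi, \zeta) = \langle y, [\xi,\zeta]\rangle = 0$, because $[\xi,\zeta] \in \mathfrak{m}$ and $y$ annihilates $\mathfrak{m}$. Therefore $B_{y}(\xi, \cdot) \equiv 0$ on $\mathfrak{g} = \mathfrak{g}_{x} \oplus \mathfrak{m}$, i.e. ${\rm ad}^{*}_{\xi}y = 0$, i.e. $\xi \in \mathfrak{g}_{y} \subset \mathfrak{g}_{x}$; since $\xi \in \mathfrak{m}$ and $\mathfrak{g}_{x} \cap \mathfrak{m} = \{0\}$, we conclude $\xi = 0$. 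Thus $B_{y}|_{\mathfrak{m}}$, and hence $\omega|_{\mathfrak{m}_{M}(r)}$, is nondegenerate, so $(\mathfrak{m}_{M}(r), \omega)$ is a symplectic vector space.

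Each step is short once the identifications are fixed; the only input not reducible to linear algebra is the property $G_{y} \subset G_{x}$ of the natural slice, together with the description $\tilde{T}_{y}U = {\rm Ann}(\mathfrak{m})$. Accordingly, I expect the main care to go not into any estimate but into the bookkeeping of the identification $\mathfrak{g}_{x}^{*} \cong {\rm Ann}(\mathfrak{m}) \subset \mathfrak{g}^{*}$ and of uniformized tangent spaces in the orbifold setting, and into keeping the equivariance sign conventions consistent with the definition of the generalized moment map used in the paper.
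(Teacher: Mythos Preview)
Your proof is correct. The perpendicularity argument is essentially identical to the paper's: both use $\omega((\xi_{M})_{r},X)=(d\mu^{\xi})_{r}(X)$ on $S_{r}$ together with $\tilde{T}_{y}U=\mathfrak{m}^{\circ}$.

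For the nondegeneracy of $\omega$ on $\mathfrak{m}_{M}(r)$ the two arguments diverge. The paper translates the question into the geometry of coadjoint orbits: it identifies $\omega|_{\mathfrak{m}_{M}(r)}$ with the Kirillov--Kostant--Souriau form restricted to ${\rm ad}^{*}(\mathfrak{m})y\subset T_{y}(G\cdot y)$, observes that this subspace is the symplectic perpendicular of $T_{y}(G_{x}\cdot y)$, and then shows $G_{x}\cdot y$ is a symplectic submanifold of $G\cdot y$ by pulling back the KKS form of the $G_{x}$-orbit through ${\rm pr}(y)\in\mathfrak{g}_{x}^{*}$. You instead compute the radical of $B_{y}(\xi,\eta)=\langle y,[\xi,\eta]\rangle$ on $\mathfrak{m}$ directly: using $[\mathfrak{g}_{x},\mathfrak{m}]\subset\mathfrak{m}$ and $\langle y,\mathfrak{m}\rangle=0$ to extend a radical element to an element of $\mathfrak{g}_{y}$, and then invoking the slice property $\mathfrak{g}_{y}\subset\mathfrak{g}_{x}$ to force it into $\mathfrak{g}_{x}\cap\mathfrak{m}=\{0\}$. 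Your route is shorter and purely algebraic, and it makes explicit exactly which feature of the natural slice is being used (namely $G_{y}\subset G_{x}$); the paper's route, on the other hand, exhibits the conceptual picture that $\mathfrak{m}_{M}(r)$ is symplectic precisely because its image under $\mu_{*}$ is a symplectic complement in the coadjoint orbit. Both arguments ultimately rest on the same two structural facts about the slice, so neither is more general, but yours avoids the detour through orbit geometry.
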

\begin{proof}
First observe that for $\xi \in \mathfrak{m}$ and 
$X \in S_{r} \cap \tilde{T}_{r}R$, 
\begin{equation*}
\omega((\xi_{M})_{r}, X) = ((\mu_{*})_{r}(X))(\xi) = 0
\end{equation*}
since $(\mu_{*})_{r}(X) \in T_{y}U = \mathfrak{m}^{\circ}$. 
Hence $\mathfrak{m}_{M}(r)$ is 
perpendicular to $S_{r} \cap \tilde{T}_{r}R$ with respect 
to the $2$-form $\omega$. 

Now we  show that the subspace $\mathfrak{m}_{M}(r)$ is 
a symplectic vector space. Since for $\xi, \eta \in \mathfrak{m}$, 
\begin{equation*}
\omega((\xi_{M})_{r}, (\eta_{M})_{r}) = ((\mu_{*})_{r}(\eta_{M}))(\xi) 
= ({\rm ad}^{*}(\eta)\mu(r))(\xi) = -y([\xi, \eta]), 
\end{equation*}
$\mathfrak{m}_{M}(r)$ is symplectic if and only if 
${\rm ad}^{*}(\mathfrak{m})y$ is a symplectic subspace of 
the tangent space $T_{y}(G \cdot y)$ of the coadjoint orbit $G \cdot y$. 
Since $G_{x} \cdot y \subset U$ and since $\mathfrak{m} = (T_{y}U)^{\circ}$, 
for each $\xi \in \mathfrak{m}$ and $\eta \in \mathfrak{g}_{x}$ we have 
\begin{equation*}
y([\xi, \eta]) = {\rm ad}^{*}(\eta)(y)\xi = 0, 
\end{equation*}
that is, $T_{y}(G_{x} \cdot y)$ and ${\rm ad}^{*}(\mathfrak{m})y$ are 
symplectically perpendicular in $T_{y}(G \cdot y)$. Hence it remains to show 
that the orbit $G_{x} \cdot y$ is a symplectic submanifold of the 
coadjoint orbit $G \cdot y$ because 
$T_{y}(G \cdot y) = T_{y}(G_{x} \cdot y) \oplus {\rm ad}^{*}(\mathfrak{m})y$. 
Since the natural projection 
${\rm pr} : \mathfrak{g}^{*} \to \mathfrak{g}_{x}^{*}$ 
is $G_{x}$-equivariant, we have ${\rm pr}(G_{x} \cdot y) = G_{x} \cdot {\rm pr}(y)$. 
By the definition of the symplectic forms on a coadjoint orbit 
the restriction of the symplectic form of $G \cdot y$ to $G_{x} \cdot y$ 
is just the pull-back by ${\rm pr}$ of the symplectic form 
of the $G_{x}$ coadjoint 
orbit $G_{x} \cdot {\rm pr}(y)$. 
Hence $G_{x} \cdot y$ is a symplectic submanifold of $G \cdot y$, and 
this proves the lemma. 
\end{proof}

\begin{lem}\label{lem42}
The uniformized tangent space $\tilde{T}_{r}M$ can be decomposed 
into the following direct sum: 
\begin{equation*}
\tilde{T}_{r}M = \tilde{T}_{r}R \oplus \mathfrak{m}_{M}(r). 
\end{equation*}
\end{lem}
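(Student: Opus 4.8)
The plan is to establish the direct sum decomposition $\tilde{T}_{r}M = \tilde{T}_{r}R \oplus \mathfrak{m}_{M}(r)$ by a dimension count together with a proof that the two subspaces intersect trivially. First I would record the dimension of $\tilde{T}_{r}R$. Since $R = \mu^{-1}(U)$ and $\mu$ is transversal to $U$ (as noted in the discussion following the statement of Theorem \ref{cross2}, because the generalized moment map is equivariant and coadjoint orbits meet $U$ transversally), the cross-section $R$ is a suborbifold with $\operatorname{codim} R = \operatorname{codim} U = \dim\mathfrak{m}$. Hence $\dim\tilde{T}_{r}R = \dim\tilde{T}_{r}M - \dim\mathfrak{m}$. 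On the other hand $\dim\mathfrak{m}_{M}(r) = \dim\mathfrak{m}$ (already observed in the paragraph preceding Lemma \ref{lem41}). Therefore the dimensions add up correctly and it suffices to prove $\tilde{T}_{r}R \cap \mathfrak{m}_{M}(r) = \{0\}$.

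For the trivial-intersection claim I would argue as follows. Suppose $(\xi_{M})_{r} \in \tilde{T}_{r}R$ for some $\xi \in \mathfrak{m}$. Since $R$ is $G_{x}$-invariant and $\mu_{R} = \mu|_{R}$, the derivative $(\mu_{*})_{r}$ maps $\tilde{T}_{r}R$ into $\tilde{T}_{y}U = \mathfrak{m}^{\circ}$, so for every $\eta \in \mathfrak{m}$ we get $0 = ((\mu_{*})_{r}((\xi_{M})_{r}))(\eta)$. But by the computation in the proof of Lemma \ref{lem41}, $((\mu_{*})_{r}((\xi_{M})_{r}))(\eta) = \omega((\eta_{M})_{r}, (\xi_{M})_{r})$ (up to sign), i.e. $(\xi_{M})_{r}$ is $\omega$-orthogonal to all of $\mathfrak{m}_{M}(r)$. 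Since $\mathfrak{m}_{M}(r)$ is a symplectic vector space for $\omega$ by Lemma \ref{lem41}, its $\omega$-radical is zero, forcing $(\xi_{M})_{r} = 0$. Thus $\tilde{T}_{r}R \cap \mathfrak{m}_{M}(r) = \{0\}$, and combined with the dimension count the decomposition follows.

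One technical point to handle carefully is that $\omega$ is only defined on the subspace $S_{r}$ (it is the nondegenerate $2$-form appearing in the local pure spinor $\varphi_{r} = \exp(B + \sqrt{-1}\omega)\wedge\Omega$, nondegenerate on $S_{r}$), not a priori on all of $\tilde{T}_{r}M$; so before invoking the orthogonality computation I should note that $\mathfrak{m}_{M}(r) \subset S_{r}$ (already recorded before Lemma \ref{lem41}) and that any $(\xi_{M})_{r} \in \tilde{T}_{r}R$ with $\xi \in \mathfrak{m}$ also lies in $S_{r}\cap\tilde{T}_{r}R$ — this uses $\iota_{\xi_{M}}\Omega = 0$, which was derived from the defining identity $\iota_{\xi_{M}}\varphi_{r} = \sqrt{-1}(d\mu^{\xi}+\sqrt{-1}\alpha^{\xi})\wedge\varphi_{r}$ of a generalized moment map. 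With both vectors in $S_{r}$ the pairing $\omega$ makes sense and the argument goes through. I expect this bookkeeping — keeping track of which subspace $\omega$ lives on and verifying the relevant vectors lie in $S_{r}$ — to be the only real obstacle; once that is in place the lemma is immediate from Lemma \ref{lem41} and the transversality already established.
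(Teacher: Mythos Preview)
Your proposal is correct and follows essentially the same route as the paper: a dimension count plus a trivial-intersection argument based on the symplecticity of $\mathfrak{m}_{M}(r)$ from Lemma~\ref{lem41}. The only cosmetic difference is that the paper invokes the perpendicularity clause of Lemma~\ref{lem41} directly (any $X\in\tilde T_{r}R\cap\mathfrak{m}_{M}(r)$ lies in $S_{r}\cap\tilde T_{r}R$, hence is $\omega$-orthogonal to $\mathfrak{m}_{M}(r)$), whereas you re-derive that orthogonality via the moment-map computation; your extra care about where $\omega$ is defined is unnecessary here since $X\in\mathfrak{m}_{M}(r)\subset S_{r}$ already places everything in $S_{r}$.
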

\begin{proof}
If $X \in \tilde{T}_{r}R \cap \mathfrak{m}_{M}(r)$, then 
$X$ is perpendicular to $\mathfrak{m}_{M}(r)$ with respect to 
$\omega$ by Lemma \ref{lem41}. 
Since $\omega$ is nondegenerate on $\mathfrak{m}_{M}(r)$, 
we have $X = 0$ and hence 
$\tilde{T}_{r}R \cap \mathfrak{m}_{M}(r) = \{ 0 \}$. Furthermore, 
since $\dim R = \dim M - \dim \mathfrak{m}$, we see that 
$\dim \tilde{T}_{r}M = \dim \tilde{T}_{r}R + \dim \mathfrak{m}_{M}(r)$, 
and obtain the decomposition 
$\tilde{T}_{r}M = \tilde{T}_{r}R \oplus \mathfrak{m}_{M}(r)$. 
\end{proof}

The decomposition induces the decomposition of $S_{r}$; 
\begin{equation*}
S_{r} = (S_{r} \cap \tilde{T}_{r}R) \oplus \mathfrak{m}_{M}(r), 
\end{equation*}
because $\mathfrak{m}_{M}(r)$ is contained in $S_{r}$. Hence we have 
the dimension 
\begin{equation*}
\dim S_{r} \cap \tilde{T}_{r}R = \dim \tilde{T}_{r}R - 2k. 
\end{equation*}
This shows that we can choose a basis of $\tilde{T}_{r}M \otimes \mathbb{C}$; 
\begin{equation*}
e_{1}, \cdots, e_{a}, u_{1}, \cdots, u_{2k}, v_{1}, \cdots, v_{2(n-k)-a}, 
\end{equation*}
where 
$a = \dim S_{r} \cap \tilde{T}_{r}R$, 
$\{ e_{1}, \cdots, e_{a} \}$ is a basis of 
$S_{r} \cap \tilde{T}_{r}R$, 
$\{ e_{1}, \cdots, e_{a},\ u_{1}, \cdots, u_{2k} \}$ 
is a basis of $\tilde{T}_{r}R$ and
$\{ v_{1}, \cdots, v_{2(n-k)-a} \}$ 
is a basis of $\mathfrak{m}_{M}(r)$. 
Since $e_{i}, v_{j} \in S_{r}$, we have 
$i_{e_{i}}(\Omega \wedge \bar \Omega) = 
i_{v_{j}}(\Omega \wedge \bar \Omega) =0$. 
Hence we see that 
$\Omega \wedge \bar \Omega (u_{1}, \cdots, u_{2k}) \not= 0$ 
because $\Omega \wedge \bar \Omega \not=0$ on $\tilde{T}_{r}M$. 
This shows that $i^{*}\Omega \wedge i^{*} \bar \Omega \not= 0$, 
and hence we have proved the claim $1$. 

Now we prove the claim $2$. We can check easily that 
$S_{r} \cap \tilde{T}_{r}R \subset S_{r}(R)$. 
Since $i^{*}\Omega \wedge i^{*} \bar \Omega \not= 0$, we have 
\begin{equation*}
\dim (S_{r} \cap \tilde{T}_{r}R) = \dim S_{r}(R) = \dim R -2k. 
\end{equation*}
Hence we obtain the equation $S_{r} \cap \tilde{T}_{r}R = S_{r}(R)$. 
Now take a vector $X \in S_{r}(R)$ which 
is perpendicular to $S_{r}(R)$ with respect to $\omega$, that is, 
$\omega(X, Y) = 0$ for any $Y \in S_{r}(R)$. Then since 
$\omega(X, (\xi_{M})_{r}) = 0$ for any $\xi \in \mathfrak{m}$, 
we see that $\omega(X, Y) = 0$ for any $Y \in S_{r}$. 
Since $\omega$ is nondegenerate on $S_{r}$, we have $X = 0$ 
and hence $\omega$ is also nondegenerate on $S_{r}(R)$. This proves the 
claim $2$. 

By the claims $1$ and $2$, we see that $(i^{*}\varphi)_{r}$ is a nondegenerate 
complex pure spinor and that $R$ is a generalized complex suborbifold of $M$. 
Finally, it is clear that the $G_{x}$-action on $R$ preserves the induced 
$i^{*}H$-twisted generalized complex structure and is Hamiltonian 
with a generalized moment map $\mu_{R} = \mu |_{R}$ and the moment one form 
$i^{*}\alpha$. This completes the proof of Theorem \ref{cross2}. 
\subsection{A proof of Theorem B}
By Remark \ref{rem41} and Theorem \ref{cross2}, we can extend Theorem 3.1 in Lerman-Meinrenken-Tolman-Woodward \cite{Ler2} to generalized complex geometry, 
and we see that there is a unique open face $\sigma$ of the Weyl chamber $\mathfrak{t}_{+}^{*}$ such that 
\begin{enumerate}
\item 
$\mu(M) \cap \sigma$ is dense in $\mu(M) \cap \mathfrak{t}_{+}^{*}$, 
\item 
the preimage $Y = \mu^{-1}(\sigma)$ is a connected $T$-invariant 
generalized complex suborbifold of $M$, and the restriction 
$\mu_{Y} = \mu |_{Y}$ and the pull back of $\alpha$ to $Y$ are 
a generalized moment map and a moment one form for the action of 
the maximal torus $T$, and
\item 
the set $G \cdot Y$ is dense in $M$. 
\end{enumerate}
(See the proof of Theorem 3.1 in \cite{Ler2}.) Since $\mu$ is proper, 
the restriction $\mu_{Y} : Y \to \mathfrak{t}^{*}$ is proper as a map 
into the open convex set $\sigma$. By Theorem \ref{noncompact}, 
the image $\mu(Y)$ is convex and is the intersection of $\sigma$ with a 
locally polyhedral set $P$, that is, $\mu(Y) = \sigma \cap P$. 
Therefore we have $\mu(M) \cap \mathfrak{t}_{+}^{*} = \overline{\mu(Y)}$. 
Since the closure of a convex set is also convex, the moment set 
$\mu(M) \cap \mathfrak{t}_{+}^{*}$ is convex. Moreover, since 
$\mu(M) \cap \mathfrak{t}_{+}^{*} = \overline{\sigma \cap P} 
= \bar \sigma \cap P$, $\mu(M) \cap \mathfrak{t}_{+}^{*}$ is a 
locally polyhedral set. Thus we have proved the first assertion. 
Now we shall show that the fiber $\mu^{-1}(x)$ is connected for all 
$x \in \mathfrak{g}^{*}$. We may assume $x \in \mathfrak{t}^{*}_{+}$. 
Since the fiber of $\mu |_{Y}$ is connected, the fiber of the 
restriction $\mu |_{G \cdot Y}$ is also connected. 
Observe that since $\mu^{-1}(G \cdot x)/G = \mu^{-1}(x)/G_{x}$ and 
the groups $G$ and $G_{x}$ are connected, the connectedness of $\mu^{-1}(x)$ is equivalent to that of $\mu^{-1}(G \cdot x)$. 
To prove the connectedness of 
$\mu^{-1}(G \cdot x)$, it is suffices to show that for any 
convex open neighborhood $B$ of $x$ in $\mathfrak{t}_{+}^{*}$, 
the closure of the open set $\mu^{-1}(G \cdot (B \cap \mathfrak{t}_{+}^{*}))$ 
is connected. By the condition $3$ of the open face $\sigma$, the 
intersection 
$\mu^{-1}(G \cdot (B \cap \mathfrak{t}_{+}^{*})) \cap G \cdot Y 
= G \cdot \mu^{-1}(B \cap \sigma)$ is dense in 
$\mu^{-1}(G \cdot (B \cap \mathfrak{t}_{+}^{*}))$ and hence also dense 
in its closure. Since $B \cap \sigma \cap \mu(M)$ is convex 
and $\mu^{-1}(y)$ is connected for each $y \in \sigma$, the set 
$G \cdot \mu^{-1}(B \cap \sigma)$ is connected and therefore its closure 
is also connected. This completes the proof of Theorem B. 

\begin{rem}\label{remlast}
In noncompact cases, the assumption of weak nondegeneracy is essential for the convexity property. For instance, consider the trivial action of $3$-dimensional 
compact torus $G = T^{3}$ on a complex manifold $M = \mathbb{C}$ 
with the standard complex structure $J$ and a holomorphic map 
$h : M \to \mathbb{C}^{3}$ defined by 
\begin{equation*}
h(z) = (\sqrt{-1}z, z, z^{2}). 
\end{equation*}
Then by identifying the Lie algebra $\mathfrak{t}$ with $\mathbb{R}^{3}$, 
we see that the action is a Hamiltonian action on a generalized complex manifold $(M, \mathcal{J}_{J})$ with a generalized moment map 
\begin{equation*}
\mu = {\rm Im}\ h = (x, y, 2xy) 
\end{equation*}
and a moment one form 
\begin{equation*}
\alpha = d({\rm Re}\ h) = (-dy, dx, 2xdx - 2ydy), 
\end{equation*}
where $z = x + \sqrt{-1}y$. 
Since the natural identification ${\rm id}: M \to \mathbb{R}^{2}$ is proper, 
the generalized moment map $\mu : M \to \mathbb{R}^{3}$ is also proper. 
In addition, $\mu$ does not have weak nondegeneracy 
because for $\xi = (1, 0, 0) \in \mathfrak{t}$, we have 
$d\mu^{\xi} = dx$ and hence ${\rm Crit}(\mu^{\xi}) = \phi$. 
(Note that ${\rm Fix}(T^{\xi}) = M$ for all $\xi \in \mathfrak{t}$ since the $G$-action is trivial.) 
In this case, the convexity property of the generalized moment map 
does not hold. Indeed, the image of the generalized moment map $\mu$ is 
just the graph of the function of two variables $f(x, y) = 2xy$. 
\end{rem}

\subsection{Concluding remarks}
A concept of generalized complex structures arises naturally 
when we consider a deformation of symplectic structures. 
Then we can consider a Hamiltonian action on a generalized complex 
manifold as a family of Hamiltonian actions of symplectic manifolds. 
We shall give a simple example below. 

Let $\mathbb{CP}^{2}$ be a $2$-dimensional complex projective space 
with the homogeneous coordinates $[z_{0}: z_{1}: z_{2}]$, and 
$\omega_{{\rm F. S. }}$ the Fubini-Study metric on $\mathbb{CP}^{2}$. 
For each $w = (w_{1}, w_{2}) \in \mathbb{C}^{*} \times \mathbb{C}^{*}$ 
we define a projective transformation $T_{w} \in {\rm PGL}(3, \mathbb{C})$ by 
\begin{equation*}
T_{w}([z_{0}: z_{1}: z_{2}]) = [z_{0}: |w_{1}|z_{1}: |w_{2}|z_{2}]. 
\end{equation*}
Then we have a deformation of the Fubini-Study metric 
$T_{w}^{*}\omega_{{\rm F. S. }}$. 
Consider the $T^{2}$-action on $\mathbb{CP}^{2}$ defined by 
\begin{equation*}
(\theta_{1}, \theta_{2}) \cdot [z_{0}: z_{1}: z_{2}] 
= [z_{0}: \theta_{1}z_{1}: \theta_{2}z_{2}] 
\end{equation*}
for all $(\theta_{1}, \theta_{2}) \in T^{2}$. Since the transformation 
$T_{w}$ commutes with the $T^{2}$-action, the action on a symplectic manifold 
$(\mathbb{CP}^{2}, T_{w}^{*}\omega_{{\rm F. S. }})$ is Hamiltonian 
with a moment map 
\begin{equation*}
\mu_{w}([z_{0}: z_{1}: z_{2}]) 
= -\frac{1}{2|z|^{2}}(|w_{1}| \cdot |z_{1}|^{2}, |w_{2}| \cdot |z_{2}|^{2}). 
\end{equation*}
By symplectic convexity theorem, we see that the image $\Delta_{w}$ of 
the moment map $\mu_{w}$ is the convex hull of 
$\{ (0, 0), (-|w_{1}|/2, 0), (0, -|w_{2}|/2) \}$, which is of course a 
compact polytope. 

Here we have obtained a family of Hamiltonian actions on symplectic 
manifolds. By considering a generalized complex structure, 
we can treat them at once. Consider the product 
$M = (\mathbb{C}^{*})^{2} \times \mathbb{CP}^{2}$ of 
an algebraic torus with a projective space. 
Since the $2$-form $T_{w}^{*}\omega_{{\rm F. S. }}$ 
is a symplectic form of $\mathbb{CP}^{2}$ for each 
$w \in (\mathbb{C}^{*})^{2}$, we can define a complex pure spinor 
$\varphi$ on $M$ by 
\begin{equation*}
\varphi = 
dw_{1} \wedge dw_{2} \wedge \exp \sqrt{-1}T_{w}^{*}\omega_{{\rm F. S. }}. 
\end{equation*}
Furthermore, since the complex pure spinor $\varphi$ is nondegenerate, it defines a generalized complex structure $\mathcal{J}_{\varphi}$ on $M$. 

Now consider a $T^{2}$-action on a generalized complex manifold 
$(M, \mathcal{J}_{\varphi})$ defined by lifting the $T^{2}$-action on 
$\mathbb{CP}^{2}$ to $M$; 
\begin{equation*}
(\theta_{1}, \theta_{2}) \cdot (w, [z_{0}: z_{1}: z_{2}]) 
= (w, [z_{0}: \theta_{1}z_{1}: \theta_{2}z_{2}]), 
\end{equation*}
for each $(\theta_{1}, \theta_{2}) \in T^{2}$. 
The $T^{2}$-action on $(M, \mathcal{J}_{\varphi})$ is Hamiltonian 
with a generalized moment map 
\begin{equation*}
\mu (w, [z_{0}: z_{1}: z_{2}]) 
= \mu_{w}([z_{0}: z_{1}: z_{2}]) 
\end{equation*}
and a moment one form $\alpha = 0$. The image $\Delta$ of 
the generalized moment map $\mu$ is a convex polyhedral set, 
\begin{equation*}
\Delta = \{ (x, y) \in \mathbb{R}^{2}\ |\ x \leq 0,\ y \leq 0 \}. 
\end{equation*}
When we restrict the $T^{2}$-action to the fiber 
$M_{w} = \{ w \} \times \mathbb{CP}^{2} \cong \mathbb{CP}^{2}$, the action on $M_{w}$ is equivalent to the Hamiltonian $T^{2}$-action on $\mathbb{CP}^{2}$ and 
the generalized moment map $\mu$ restricted to $M_{w}$ coincides with 
the moment map $\mu_{w}$. 
This shows that we can think of the Hamiltonian $T^{2}$-action 
on a generalized complex manifold $(M, \mathcal{J}_{\varphi})$ as a family 
of Hamiltonian $T^{2}$-actions on symplectic manifolds 
$(\mathbb{CP}^{2}, T_{w}^{*}\omega_{{\rm F. S. }})$. Then the image $\Delta$ of the generalized moment map $\mu$ coincides with the union of $\Delta_{w}$; 
\begin{equation*}
\Delta = \cup_{w \in (\mathbb{C}^{*})^{2}}\Delta_{w}. 
\end{equation*}
Here we see that not only each $\Delta_{w}$ is convex, but 
the union $\Delta$ is also convex. Note that $\Delta$ is not compact although $\Delta_{w}$ is a compact polytope for each $w \in (\mathbb{C}^{*})^{2}$. 
%%%%%%%%%%%%%%%%%%%%%%%%%%%%%%%%%%%%%%%%%%%%%%%%%%%%%%%%%%%%%%%%%%%%%%%%%%%%%%%

\end{document}